\numberwithin{equation}{section}
\numberwithin{figure}{section}
\newlength{\lyxlabelwidth}      
\theoremstyle{plain}
\newtheorem{thmx}{\protect\theoremname}
\theoremstyle{plain}
\newtheorem{corx}[thmx]{Corollary}
\theoremstyle{plain}
\newtheorem{thm}{\protect\theoremname}[section]
\theoremstyle{plain}
\newtheorem{lem}[thm]{\protect\lemmaname}
\theoremstyle{definition}
\newtheorem{observation}[thm]{\protect\observationname}
\theoremstyle{definition}
\newtheorem{standinghypothesis}[thm]{\protect\standinghypothesisname}
\theoremstyle{plain}
\newtheorem{prop}[thm]{\protect\propositionname}
	\newenvironment{elabeling}[2][]%
	{\settowidth{\lyxlabelwidth}{#2}
		\begin{description}[font=\normalfont,style=sameline,
			leftmargin=\lyxlabelwidth,#1]}
	{\end{description}}
\theoremstyle{remark}
\newtheorem{rem}[thm]{\protect\remarkname}
\theoremstyle{remark}
\newtheorem*{rem*}{\protect\remarkname}
\theoremstyle{definition}
\newtheorem{defn}[thm]{\protect\definitionname}
\theoremstyle{plain}
\newtheorem{question}[thm]{Question}
\theoremstyle{plain}
\newtheorem{cor}[thm]{\protect\corollaryname}
\theoremstyle{definition}
\newtheorem{example}[thm]{\protect\examplename}
\let\mathcal=\CMcal
\let\ldash=\l
\def\cst{C$^{*}$}
\setlist[enumerate,1]{label={(\arabic*)},ref={(\arabic*)}}
\setlist[enumerate,2]{label={\alph*.},ref={\alph*}}
\providecommand{\corollaryname}{Corollary}
\providecommand{\definitionname}{Definition}
\providecommand{\examplename}{Example}
\providecommand{\lemmaname}{Lemma}
\providecommand{\observationname}{Observation}
\providecommand{\propositionname}{Proposition}
\providecommand{\remarkname}{Remark}
\providecommand{\standinghypothesisname}{Standing Hypothesis}
\providecommand{\theoremname}{Theorem}
\begin{document}
\global\long\def\e{\varepsilon}%
\global\long\def\N{\mathbb{N}}%
\global\long\def\Z{\mathbb{Z}}%
\global\long\def\Q{\mathbb{Q}}%
\global\long\def\R{\mathbb{R}}%
\global\long\def\C{\mathbb{C}}%
\global\long\def\H{\mathcal{H}}%
\global\long\def\K{\mathcal{K}}%

\global\long\def\linspan{\operatorname{span}}%
\global\long\def\clinspan{\operatorname{\overline{span}}}%
\global\long\def\supp{\operatorname{supp}}%
\global\long\def\op{\mathrm{op}}%

\global\long\def\Rant{\text{\scalebox{1.15}{\ensuremath{\mathtt{R}}}}}%
\global\long\def\Sant{\text{\scalebox{1.15}{\ensuremath{\mathtt{S}}}}}%
\global\long\def\QG{\mathbb{G}}%
\global\long\def\G{\QG}%
\global\long\def\Cz{\mathrm{C}_{0}}%
\global\long\def\CzU{\mathrm{C}_{0}^{\mathrm{u}}}%
\global\long\def\CStar{\mathrm{C}^{*}}%
\global\long\def\VN{\operatorname{VN}}%
\global\long\def\Mor{\operatorname{Mor}}%
\global\long\def\M{\mathrm{M}}%
\global\long\def\Aut{\operatorname{Aut}}%
\global\long\def\CStarR{\mathrm{C}_{\mathrm{r}}^{*}}%
\global\long\def\tensormin{\otimes_{\mathrm{min}}}%
\global\long\def\tensorn{\mathbin{\overline{\otimes}}}%
\global\long\def\id{\mathrm{id}}%
\global\long\def\i{\id}%
\global\long\def\Cb{\mathrm{C}_{b}}%
\global\long\def\Cc{\mathrm{C}_{c}}%
\global\long\def\Ad{\operatorname{Ad}}%
\global\long\def\ev{\mathrm{ev}}%
\global\long\def\a{\alpha}%
\global\long\def\one{\mathds{1}}%
\global\long\def\d{\,\mathrm{d}}%
\global\long\def\dd{\mathrm{d}}%
\global\long\def\tensor{\otimes}%
\global\long\def\ot{\tensor}%
\global\long\def\Rieffel#1#2{#1^{#2}}%
\global\long\def\GPsi{\Rieffel{\G}{\Psi}}%
\global\long\def\GammaPsi{\Rieffel{\Gamma}{\Psi}}%
\global\long\def\GammaPsi{\Gamma^{\Psi}}%
\global\long\def\Linfty{L^{\infty}}%
\global\long\def\Lone{L^{1}}%
\global\long\def\Ltwo{L^{2}}%
\global\long\def\LoneS{L_{\sharp}^{1}}%
\global\long\def\Ww{\mathds{W}}%
\global\long\def\wW{\text{\reflectbox{\ensuremath{\Ww}}}\:\!}%

\global\long\def\pres#1#2#3{\prescript{#1}{#2}{#3}}%
\global\long\def\netdirset#1{\EuScript{#1}}%

\title[Convolution semigroups on Rieffel deformations]{Convolution semigroups on Rieffel deformations of locally compact
quantum groups}
\author{Adam Skalski}
\address{Institute of Mathematics of the Polish Academy of Sciences, ul.~\'Sniadeckich
8, 00--656 Warszawa, Poland}
\email{a.skalski@impan.pl}
\author{Ami Viselter}
\address{Department of Mathematics, University of Haifa, 3103301 Haifa, Israel}
\email{aviselter@univ.haifa.ac.il}

\subjclass[2020]{Primary 46L65; Secondary 46L67,  46L89}
\keywords{locally compact quantum group; Rieffel deformation; cocycle twisting; quantum convolution semigroup}

\begin{abstract}
Consider a locally compact quantum group $\mathbb{G}$ with a closed
classical abelian subgroup $\Gamma$ equipped with a $2$-cocycle
$\Psi:\hat{\Gamma}\times\hat{\Gamma}\to\mathbb{C}$. We study in detail
the associated Rieffel deformation $\mathbb{G}^{\Psi}$ and establish
a canonical correspondence between $\Gamma$-invariant convolution
semigroups of states on $\mathbb{G}$ and on $\mathbb{G}^{\Psi}$.
\end{abstract}

\maketitle

\section{Introduction and general preliminaries}

The study of the Rieffel deformation in the sense of \cite{Rieffel_deform}
in the context of compact quantum groups goes back at least to the
article \cite{Wang__deformation}. At the same time an analogous deformation
was also studied for Kac algebras in the sense of Enock and Schwartz
in \cite{Enock_Vainerman__deform_Kac_alg_abel_grp}, where it was
noted that the Rieffel deformation, at least formally, can be dually
realised as a special case of the Hopf algebraic cocycle twist. This
perspective was later investigated in depth in the context of general
locally compact quantum groups in \cite{Kasprzak__Rieffel_deform_crossed_prod}
(available as a preprint since June '06), which first recognised the
connection between the Rieffel deformation of (quantum) groups and
Landstad's theory for crossed products \cite{Landstad__dual_thy_cov_sys}.
A similar approach was adopted in \cite{Fima_Vainerman__twist_Rieffel_deform}
(available as a preprint since January '08). It is worth noting that
\cite{Kasprzak__Rieffel_deform_crossed_prod} focuses on the \cst-algebraic
picture and takes as ingredients of the construction a locally compact
group $G$ equipped with a closed abelian subgroup $\Gamma$ and a
continuous $2$-cocycle $\Psi$ on $\hat{\Gamma}$, satisfying mild
additional assumptions, whereas \cite{Fima_Vainerman__twist_Rieffel_deform}
is concerned with the von Neumann algebraic formulation and its initial
data is a general locally compact quantum group $\G$ with a closed
abelian subgroup $\Gamma$ and a bicharacter $\Psi$ on $\hat{\Gamma}$,
again with some additional technical conditions satisfied. In each
case the authors produce a locally compact quantum group $\GPsi$
in the sense of \cite{Kustermans_Vaes__LCQG_C_star}. The need for
the extra assumptions was related to the fact that the cocycle twisting
of locally compact quantum groups was at the time not yet fully understood.
This changed with the groundbreaking work \cite{DeCommer__Galois_obj_cocycle_twist_LCQG},
where it was shown that every cocycle twist of a locally compact quantum
group yields another locally compact quantum group.

In this paper we address the question of constructing quantum convolution
semigroups of states on Rieffel deformations. Quantum convolution
semigroups of states on locally compact quantum groups correspond
(in the probabilistic sense) to quantum L\'evy processes, and we
refer to \cite{Skalski_Viselter__convolution_semigroups,Skalski_Viselter__generating_functionals}
for an introduction to the concept and its various incarnations and
applications. It should be noted that cocycle twisting was exploited
as a means of constructing quantum convolution semigroups already
in \cite[Subsection~5.2]{Skalski_Viselter__convolution_semigroups},
but there -- in comparison to this work -- we were considering directly
the objects associated to the cocycle twist, and not to its dual.
This means in particular that the examples in \cite[Subsection~5.2]{Skalski_Viselter__convolution_semigroups}
should be conceptually rather viewed as deformations of semigroups
of Herz--Schur multipliers, and not of classical convolution semigroups
of measures.

To achieve our aim, we formally put the Rieffel deformation of \cite{Kasprzak__Rieffel_deform_crossed_prod,Fima_Vainerman__twist_Rieffel_deform}
under the umbrella of cocycle twisting in full generality, exploiting
De~Commer's results from \cite{DeCommer__Galois_obj_cocycle_twist_LCQG}.
We choose to work with the following setup, later called the \emph{Standing
Hypothesis}: we start with a locally compact quantum group $\QG$
equipped with a closed abelian subgroup $\Gamma$ and a continuous
$2$-cocycle $\Psi$ on $\hat{\Gamma}$ (with no extra assumptions).
This context has one clear advantage: the deformation procedure becomes
a fully symmetric operation, namely the new locally compact quantum
group $\GPsi$ again admits $\Gamma$ as a subgroup, and $\QG$ is
itself a Rieffel deformation of $\GPsi$. 

Thus we can formulate the main result of our paper is the following
theorem.
\begin{thmx}
\label{thmx}Let $\G$ be a locally compact quantum group with a closed
classical abelian subgroup $\Gamma$. Let $\Psi:\hat{\Gamma}\times\hat{\Gamma}\to\C$
be a $2$-cocycle on $\hat{\Gamma}$. Then there is a canonical one-to-one
correspondence between $w^{*}$-continuous convolution semigroups
of states invariant under the adjoint action of $\Gamma$ on $\G$
and on its Rieffel deformation $\GPsi$.
\end{thmx}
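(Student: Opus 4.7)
The strategy is to exploit the cocycle twisting realisation of the Rieffel deformation (in the spirit of \cite{DeCommer__Galois_obj_cocycle_twist_LCQG}) and reduce the theorem to a direct identification of $\Gamma$-invariant functionals on $\G$ with those on $\GPsi$, compatible with convolution. The key structural facts I will need from the earlier sections are: a unitary cocycle $\Omega \in \VN(\Gamma) \tensorn \VN(\Gamma)$ implementing $\Psi$, an explicit relation between the coproducts $\Delta$ and $\Delta^{\Psi}$ of the form $\Delta^{\Psi}(\cdot) = \Omega\,\Delta(\cdot)\,\Omega^{*}$ (in the appropriate implementation, e.g.\ within a crossed product or De~Commer's Galois object), and the fact that the adjoint action of $\Gamma$ on $\GPsi$ coincides, under the deformation, with the adjoint action of $\Gamma$ on $\G$. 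In particular, the space of $\Gamma$-invariant normal states should be \emph{literally the same set} on the two sides, once the natural identification of the relevant predual subspaces is made.

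Given these ingredients, I will proceed in three steps. First, I will set up the canonical bijection at the level of single states, sending a $\Gamma$-invariant normal state $\mu$ on $\G$ to a $\Gamma$-invariant normal state $\mu^{\Psi}$ on $\GPsi$ via the identification of their $\Gamma$-fixed preduals; by construction this map is $w^{*}$-$w^{*}$ continuous on the invariant subspace. Second, I will verify that this map intertwines the two convolutions. The decisive computation is to show that for $\Gamma$-invariant normal states $\mu,\nu$ one has
\[
(\mu \otimes \nu)\bigl(\Omega\,\Delta(x)\,\Omega^{*}\bigr) = (\mu \otimes \nu)\bigl(\Delta(x)\bigr)
\]
for $x$ ranging over a suitably dense set of elements. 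This should follow from the fact that $\Omega$ lies in $\VN(\Gamma) \tensorn \VN(\Gamma)$, combined with $\Gamma$-invariance of $\mu$ and $\nu$: the latter forces the slices of $\Omega$ and $\Omega^{*}$ against $\mu$ and $\nu$ to be scalars via the Plancherel/Fourier decomposition associated to the abelian subgroup $\Gamma$, so that the two cocycle factors cancel. Third, the semigroup property of $(\mu_{t})_{t \geq 0}$ combined with the intertwining relation immediately gives the semigroup property of $(\mu_{t}^{\Psi})_{t \geq 0}$, while $w^{*}$-continuity in $t$ passes through the identification since the latter is implemented by an isomorphism of the respective $\Gamma$-invariant preduals.

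The step I expect to be the main obstacle is the intertwining computation, since the Rieffel deformation is not literally given by conjugating $\Delta$ by $\Omega$ inside $\Linfty(\G) \tensorn \Linfty(\G)$ but only in a larger ambient algebra (a crossed product in the Kasprzak picture, or a Galois object in De~Commer's); one must therefore translate the schematic identity above into the concrete implementation used in the construction of $\GPsi$ and check that the $\Gamma$-invariance hypothesis is precisely what allows the cocycle to disappear. Once this compatibility has been verified, the inverse correspondence is free: by the symmetry of the Standing Hypothesis, $\G$ is itself a Rieffel deformation of $\GPsi$ (via the conjugate cocycle $\overline{\Psi}$), so the same construction applied in the reverse direction yields the inverse bijection and establishes the canonical one-to-one correspondence.
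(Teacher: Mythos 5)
There is a genuine gap, and it sits exactly where you flag the ``main obstacle'': the central mechanism of your proposal does not survive the translation into the actual construction. The cocycle $\Omega=(L\tensor L)(\Psi)$ twists the co-multiplication of the \emph{dual} $\hat{\G}$, not of $\G$; the deformation $\GPsi$ is defined as $\widehat{\hat{\G}_{\Omega}}$, and $\Linfty(\GPsi)$, $\Cz(\GPsi)$ and $\CzU(\GPsi)$ are genuinely different algebras from their undeformed counterparts (concretely, $\Cz(\GPsi)$ is isomorphic to the Landstad algebra $\Cz(\G)^{\overline{\Psi}\boxtimes\dot{\Psi}}\subseteq\M(\Cz(\G)\rtimes_{\rho}\Gamma^{2})$, and the deformed co-multiplication is $\Ad((\lambda^{R}\tensor\lambda^{L})(\Upsilon))\circ\tilde{\Delta}$ on the crossed product, not $\Omega\Delta(\cdot)\Omega^{*}$ on $\Linfty(\G)\tensorn\Linfty(\G)$). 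Consequently there is no a priori ``identification of the $\Gamma$-fixed preduals'' under which the invariant states on the two sides are literally the same set --- producing such a correspondence \emph{is} the content of the theorem, and your proposed cancellation $(\mu\tensor\nu)(\Omega\Delta(x)\Omega^{*})=(\mu\tensor\nu)(\Delta(x))$ cannot even be formulated: $\mu,\nu$ are functionals on ($\CzU$ of) $\G$, while $\Omega$ lives in $\Linfty(\hat{\G})\tensorn\Linfty(\hat{\G})$, and if the identity did hold it would say the convolution is unchanged, i.e.\ that nothing is being deformed.

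The paper's route is structurally different and avoids this: one passes from the invariant state $\mu$ to its convolution operator $\mathscr{R}_{\mu}$ on $\Cz(\G)$, shows that adjoint-invariance of $\mu$ is equivalent to $\mathscr{R}_{\mu}$ commuting with the left-right translation action of $\Gamma^{2}$, deforms this non-degenerate completely positive map through the crossed product to a map $\mathscr{R}_{\mu}^{\Psi}$ on the Landstad algebra, and then recognises $\pi^{\Psi}\circ\mathscr{R}_{\mu}^{\Psi}\circ{(\pi^{\Psi})}^{-1}$ as the convolution operator of a state $\mu^{\Psi}$ of $\CzU(\GPsi)$ via the characterisation of convolution operators as CP maps satisfying $(T\tensor\id)\circ\Delta=\Delta\circ T$. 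Even the $w^{*}$-continuity in $t$ is not automatic in this picture and requires an extension to the von Neumann algebraic crossed product. The one ingredient you do have right is the invertibility: the symmetry $(\GPsi)^{\overline{\Psi}}\cong\G$ is indeed what makes the correspondence bijective, and the paper uses it in exactly the way you describe.
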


If $\G$ is in addition co-amenable, we can show that the correspondence
above preserves the \emph{symmetry} (i.e.~unitary antipode invariance)
of the relevant convolution semigroups.

As mentioned before the formulation of the theorem, the main motivation
behind developing such results comes from the desire to use a classical
locally compact group $G$ with a convolution semigroup of probability
measures to produce a genuinely quantum stochastic evolution on the
deformed locally compact \emph{quantum} group $\Rieffel G{\Psi}$.
And indeed, we obtain as a very special case of the above result the
following corollary. 
\begin{corx}
\label{corx}Let $G$ be a locally compact group with a closed abelian
subgroup $\Gamma$ and let $\Psi:\hat{\Gamma}\times\hat{\Gamma}\to\C$
be a $2$-cocycle on $\hat{\Gamma}$. Every (symmetric) $w^{*}$-continuous
convolution semigroup of probability measures ${(\mu_{t})}_{t\geq0}$
on $G$ that is invariant under the adjoint action of $\Gamma$ (so
for example is supported on $Z_{G}(\Gamma)$, the centraliser of $\Gamma$
inside $G$) determines in a canonical way a (symmetric) $w^{*}$-continuous
convolution semigroup ${(\mu_{t}^{\Psi})}_{t\geq0}$ of states on
the Rieffel deformation $\Rieffel G{\Psi}$.
\end{corx}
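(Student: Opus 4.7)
The plan is to obtain Corollary~\ref{corx} as a direct specialisation of Theorem~\ref{thmx} to the case in which $\G = G$ is a classical locally compact group, viewed as a commutative locally compact quantum group with $\Cz(\G) = \Cz(G)$ and comultiplication given by $(\Delta f)(g,h)=f(gh)$. The cocycle data $(\Gamma,\Psi)$ of the corollary is then precisely the data required to apply the theorem to $\G$, and the deformation $\Rieffel G\Psi$ is by definition $\GPsi$.

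The first step is to unwind the standard dictionary between classical probability measures and quantum states. A Borel probability measure $\mu$ on $G$ determines the state $\omega_\mu\colon f\mapsto\int_{G}f\,\dd\mu$ on $\Cz(G)$ (equivalently, a normal state on $\Linfty(G)$); the assignment $\mu\mapsto\omega_{\mu}$ is a bijection from Borel probability measures onto states, it intertwines convolution of measures with convolution of states in the sense of $\G$, and $w^{*}$-continuity of $(\mu_{t})_{t\geq0}$ as a family of measures agrees with $w^{*}$-continuity of $(\omega_{\mu_{t}})_{t\geq0}$ as a family of states. Consequently, a $w^{*}$-continuous convolution semigroup of probability measures on $G$ is exactly a $w^{*}$-continuous convolution semigroup of states on $\G$.

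Next, I would match the invariance hypotheses. The adjoint action of $\Gamma$ on the quantum group $\G=G$ is induced, at the level of $\Linfty(G)$, by the pullback along the conjugation maps $g\mapsto\gamma g\gamma^{-1}$ for $\gamma\in\Gamma$, so the state $\omega_{\mu_{t}}$ is $\Gamma$-invariant precisely when $\Ad(\gamma)_{*}\mu_{t}=\mu_{t}$ for all $\gamma\in\Gamma$, which is the hypothesis in Corollary~\ref{corx}. The parenthetical remark is then immediate: if $\supp(\mu_{t})\subseteq Z_{G}(\Gamma)$, then $\Ad(\gamma)$ fixes $\supp(\mu_{t})$ pointwise and so $\Ad(\gamma)_{*}\mu_{t}=\mu_{t}$ automatically. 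With these identifications, Theorem~\ref{thmx} delivers in a canonical way a $w^{*}$-continuous convolution semigroup ${(\omega_{t}^{\Psi})}_{t\geq0}$ of states on $\GPsi=\Rieffel G\Psi$, and we set $\mu_{t}^{\Psi}:=\omega_{t}^{\Psi}$.

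For the symmetric assertion, I would note that every classical locally compact group is co-amenable when regarded as a commutative locally compact quantum group, since $\CzU(\G)=\Cz(G)=\Cz(\G)$; thus the co-amenable addendum stated immediately after Theorem~\ref{thmx} applies, and unitary-antipode invariance of $(\mu_{t})$ transfers to $(\mu_{t}^{\Psi})$. There is no substantial new obstacle here: the entire proof is a translation exercise, and the only thing that genuinely needs attention is verifying that the notion of $\Gamma$-invariance used in Theorem~\ref{thmx} (defined in terms of the adjoint action on $\G$) reduces in the classical setting to invariance under inner automorphisms of $G$, which is a direct check once the adjoint action has been unpacked in the commutative case.
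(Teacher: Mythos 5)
Your proposal is correct and follows essentially the same route as the paper, which obtains Corollary~\ref{corx} as the commutative specialisation of Theorem~\ref{thm:mainconv} (equivalently Theorem~\ref{thmx}) together with Theorem~\ref{thm:symmetry}, using the co-amenability of classical groups for the symmetry statement and the identification of the adjoint action with conjugation (cf.\ Remark~\ref{rem:single_functional_trivial}). The translation between Radon probability measures and states, and the observation that support in $Z_{G}(\Gamma)$ forces invariance, are exactly the points the paper relies on.
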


We also observe that given the assumption of the above corollary one
also has a natural realisation of $Z_{G}(\Gamma)$ as a closed subgroup
of $\Rieffel G{\Psi}$. This means that if we start from a convolution
semigroup of measures supported on $Z_{G}(\Gamma)$ there is an alternative,
easier way of producing the semigroup ${(\mu_{t}^{\Psi})}_{t\geq0}$.
We would like to stress however that even in this case we obtain a
non-trivial quantum stochastic evolution on the deformed quantum group. 

The detailed plan of the paper is as follows. After preparing basic
notation and terminology and making a few remarks on non-degeneracy
in the last part of the introduction, in Section~\ref{sec:LCQGCT}
we present the locally compact quantum group preliminaries and describe
the cocycle twisting in this analytic context, focusing on cocycles
coming from classical abelian subgroups and presenting in this case
an explicit formula for the multiplicative unitary of the twist. Section~\ref{sec:Rieffel_deform},
which is potentially of independent value, formalises the dual relationship
between the \cst-algebraic Rieffel deformation and the cocycle twist,
all in the framework of Standing~Hypothesis~\ref{stand_hyp}. We
largely follow the approach of \cite{Kasprzak__Rieffel_deform_crossed_prod},
but the greater generality forces us to improve several arguments.
Here we also study a few quantum group aspects of the Rieffel deformation,
and show that the setup with which we choose to work allows us to
view the Rieffel deformation as an `invertible' transformation.
Indeed, a simple computation shows that the original Hopf--von Neumann
algebraic object is itself a cocycle twist of the twisted one, and
we discuss the corresponding fact on the level of \cst-algebraic
Rieffel deformations. Finally in Section~\ref{sec:convsemig} we
introduce convolution semigroups of states, provide a general construction
of Rieffel deformations of well-behaved completely positive maps,
and apply it together with the results of Section~\ref{sec:Rieffel_deform}
to obtain the main theorems of the paper. In particular Theorem~\ref{thmx}
above follows from Theorem~\ref{thm:mainconv}, and Corollary~\ref{corx}
follows from Theorems~\ref{thm:mainconv} and \ref{thm:symmetry}.
Here we also discuss in Example~\ref{ex:E(2)} a concrete application
of Corollary~\ref{corx}. Each subsection is based on its own individual
assumptions stated clearly in its beginning. 

Lastly, we remark that the ultimate generalisation of the results
of this paper would concern the Rieffel deformation with respect to
an \emph{arbitrary} $2$-cocycle $\Omega$ on $\hat{\G}$, not necessarily
`living on' the dual of an abelian subgroup. This would involve
considering the dual $\Rieffel{\G}{\Omega}:=\widehat{\hat{\G}_{\Omega}}$
of the cocycle twist $\hat{\G}_{\Omega}$ and proving that either
$\Cz(\Rieffel{\G}{\Omega})$ or $\Linfty(\Rieffel{\G}{\Omega})$ can
be seen as a crossed product-based deformation of $\Cz(\G)$ or $\Linfty(\G)$,
respectively, in a way that allows writing a convenient formula for
the co-multiplication of $\Rieffel{\G}{\Omega}$ based on that of
$\G$ (analogously to Theorems~\ref{thm:C0_G_Psi_crossed_prod} and
\ref{thm:Psi_comult_crossed_prod}). The starting point of such a
research would probably be \cite{Neshveyev_Tuset__deform_C_alg_cocycle_LCQG},
which has already found a presentation of $\Cz(\Rieffel{\G}{\Omega})$
in that spirit. The specific construction presented in our paper relies
heavily on the $2$-cocycle coming from an abelian subgroup, and probably
cannot be pushed further.

\subsection{Notation and conventions}

The unit of a unital \cst-algebra is denoted by $\one$. For a \cst-algebra
$A$ we write $\M(A)$ for the multiplier algebra of $A$ and denote
the unitary group of $\M(A)$ by $\mathcal{U}(A)$. Further, given
$u\in\mathcal{U}(A)$ we write $\Ad(u)$ for the inner automorphism
of $A$ given by $x\mapsto uxu^{*}$. For \cst-algebras $A,B$, a
non-degenerate $*$-homomorphism from $A$ to $\M(B)$ is called a
\emph{morphism} from $A$ to $B$ and the space of these is denoted
by $\Mor(A,B)$. More generally, every \emph{strict}, and in particular
every \emph{non-degenerate}, completely positive map from $A$ to
$\M(B)$ \cite[p.~49]{Lance} has a (unique) extension to a completely
positive map from $\M(A)$ to $\M(B)$ that is strictly continuous
on the unit ball \cite[Corollary~5.7]{Lance}, and we usually do not
distinguish between these two maps.

Let $\Gamma$ be a locally compact group. All $L^{p}$-spaces of $\Gamma$
are with respect to the left Haar measure. We write $\lambda^{\Gamma}=\left(\lambda_{\gamma}^{\Gamma}\right)_{\gamma\in\Gamma}$
for the left regular representation of $\Gamma$ on $\Ltwo(\Gamma)$,
namely $(\lambda_{\gamma}^{\Gamma}f)(\gamma')=f(\gamma^{-1}\gamma')$
for $f\in\Ltwo(\Gamma)$ and $\gamma,\gamma'\in\Gamma$. By a (unitary)
representation of $\Gamma$ inside a \cst-algebra $A$ we mean a
strictly continuous homomorphism $\lambda=\left(\lambda_{\gamma}\right)_{\gamma\in\Gamma}$
from $\Gamma$ into the unitary group $\mathcal{U}(A)$. To every
such $\lambda$ there is an associated morphism from the full group
\cst-algebra $\CStar(\Gamma)$ to $A$ mapping the representative
of $f\in\Cc(\Gamma)$ in $\CStar(\Gamma)$ to $\int_{\Gamma}f(\gamma)\lambda_{\gamma}\d\gamma$
(strict integration). Slightly abusing the notation we will also denote
the latter morphism by $\lambda$. By an action of $\Gamma$ on a
\cst-algebra $A$ we mean a continuous homomorphism $\Gamma\to\Aut(A)$,
where $\Aut(A)$ is endowed with the point--norm topology as customary. 

A (measurable) \emph{$2$-cocycle} on $\Gamma$ is a measurable function
$\Psi$ from $\Gamma\times\Gamma$ to the complex unit circle $\mathbb{T}$
satisfying the $2$-cocycle condition 
\begin{equation}
\Psi(\gamma_{1},\gamma_{2})\Psi(\gamma_{1}\gamma_{2},\gamma_{3})=\Psi(\gamma_{2},\gamma_{3})\Psi(\gamma_{1},\gamma_{2}\gamma_{3})\qquad(\forall_{\gamma_{1},\gamma_{2},\gamma_{3}\in\Gamma}).\label{eq:func_2_cocycle}
\end{equation}
We mostly require $\Psi$ to be continuous, as well as normalised
in the sense that $\Psi(e,\cdot)\equiv1\equiv\Psi(\cdot,e)$. Under
the normalisation condition one has 
\begin{equation}
\Psi(\gamma_{1}^{-1},\gamma_{1}\gamma_{2})=\overline{\Psi(\gamma_{1},\gamma_{2})}\Psi(\gamma_{1}^{-1},\gamma_{1})\qquad(\forall_{\gamma_{1},\gamma_{2}\in\Gamma}).\label{eq:normal_2_cocycle_U}
\end{equation}
If $\Psi'$ is another $2$-cocycle on $\Gamma$, then the function
$\Psi\boxtimes\Psi':\Gamma^{2}\times\Gamma^{2}\to\mathbb{T}$ given
by 
\begin{equation}
(\Psi\boxtimes\Psi')\big((\gamma_{1},\gamma_{1}'),(\gamma_{2},\gamma_{2}')\big):=\Psi(\gamma_{1},\gamma_{2})\Psi'(\gamma_{1}',\gamma_{2}'),\qquad(\gamma_{1},\gamma_{1}'),(\gamma_{2},\gamma_{2}')\in\Gamma^{2},\label{eq:cocycle_tensor_prod}
\end{equation}
is a $2$-cocycle on $\Gamma^{2}$. 

Assume from now on that $\Gamma$ is abelian. We follow the convention
of \cite{Pedersen__book,Kasprzak__Rieffel_deform_crossed_prod} by
using the \emph{inverse} Fourier transform on $\Gamma$, that is,
letting $\hat{f}(\hat{\gamma}):=\int_{\Gamma}\left\langle \hat{\gamma},\gamma\right\rangle f(\gamma)\d\gamma$
for $f\in\Cc(\Gamma)$ and $\hat{\gamma}\in\hat{\Gamma}$ (so $\left\langle \hat{\gamma},\gamma\right\rangle $
instead of $\overline{\left\langle \hat{\gamma},\gamma\right\rangle }$).
The map that sends the representative of $f\in\Cc(\Gamma)$ in $\CStarR(\Gamma)$
to $\hat{f}\in\Cz(\hat{\Gamma})$ extends to a canonical \cst-algebra
isomorphism $\CStarR(\Gamma)\cong\Cz(\hat{\Gamma})$; the canonical
strict extension of its inverse, for each $\gamma\in\Gamma$, sends
$\gamma$ treated as a function on $\hat{\Gamma}$ that belongs to
$\Cb(\hat{\Gamma})\cong\M(\Cz(\hat{\Gamma}))$ to $\lambda_{\gamma}^{\Gamma}$
in $\M(\CStarR(\Gamma))$. The isomorphism $\Cz(\hat{\Gamma})\cong\CStarR(\Gamma)$
is a restriction of the spatial isomorphism $\Linfty(\hat{\Gamma})\cong\VN(\Gamma)$
obtained by conjugation with the $\Ltwo(\Gamma)\to\Ltwo(\hat{\Gamma})$
Fourier transform.

If $\lambda$ is a representation of (the abelian) $\Gamma$ inside
a \cst-algebra $A$, we treat the associated map $\lambda\in\Mor(\CStarR(\Gamma)\cong\CStar(\Gamma),A)$
also as a map $\lambda\in\Mor(\Cz(\hat{\Gamma}),A)$ using the above
isomorphism $\Cz(\hat{\Gamma})\cong\CStarR(\Gamma)$. In other words,
$\lambda(\hat{f})=\int_{\Gamma}f(\gamma)\lambda_{\gamma}\d\gamma$
for all $f\in\Cc(\Gamma)$.

\subsection{A few remarks on non-degeneracy of inclusions and completely positive
maps}

Non-degeneracy of $*$-homomorphisms and, more generally, of completely
positive maps plays an important role in this paper. We finish the
introductory section with a few remarks on this subject. Recall that
if $A,B$ are \cst-algebras and $S:A\to\M(B)$ is a completely positive
map, then $S$ is said to be \emph{non-degenerate} if for some/every
approximate identity $\left(e_{i}\right)_{i\in\netdirset I}$ of $A$
the net $\left(Se_{i}\right)_{i\in\netdirset I}$ converges strictly
to $\one_{\M(B)}$ in $\M(B)$. 
\begin{lem}
\label{lem:nondeg_CP_Banach_alg_approx_id}Let $A,B$ be \cst-algebras
and $S:A\to\M(B)$ be a contractive completely positive map. If there
exists a contractive Banach algebraic approximate identity $\left(a_{i}\right)_{i\in\netdirset I}$
of $A$ such that $\left(Sa_{i}\right)_{i\in\netdirset I}$ converges
strictly to $\one_{\M(B)}$ in $\M(B)$, then $S$ is non-degenerate.
\end{lem}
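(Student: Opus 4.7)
The plan is to apply the Hilbert-module version of Stinespring's theorem. Viewing $B$ as a right Hilbert $B$-module over itself, so that $\M(B)\cong\mathcal{L}(B)$, the contractive c.p.\ map $S:A\to\mathcal{L}(B)$ admits a dilation
\[
S(a)\;=\;V^{*}\pi(a)V\qquad(a\in A)
\]
where $\pi:A\to\mathcal{L}(E)$ is a $*$-representation on some Hilbert $B$-module $E$ (which we may take non-degenerate by a standard argument, e.g.\ restricting to the essential submodule or passing through the unitisation) and $V\in\mathcal{L}(B,E)$ is adjointable with $\|V\|\leq1$.

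The key step is to show that $V$ is an isometry. Because $\pi$ is non-degenerate and $(a_{i})_{i\in\netdirset{I}}$ is a contractive (left) approximate identity of $A$ in the Banach-algebraic sense, for any vector of the form $\xi=\pi(a)Vb$ with $a\in A$ and $b\in B$ one has $\pi(a_{i})\xi=\pi(a_{i}a)Vb\to\pi(a)Vb=\xi$ in $E$, because $a_{i}a\to a$ in norm. Since such vectors are dense in $E$ and $\|\pi(a_{i})\|\leq1$, it follows that $\pi(a_{i})\to\one_{\mathcal{L}(E)}$ strictly. Consequently, for each $b\in B$,
\[
V^{*}Vb\;=\;\lim_{i}V^{*}\pi(a_{i})Vb\;=\;\lim_{i}S(a_{i})b\;=\;b,
\]
the last equality using the standing hypothesis on $S(a_{i})$. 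Hence $V^{*}V=\one_{B}$.

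Once $V$ is known to be an isometry, the conclusion is immediate. Let $(e_{j})_{j\in\netdirset{J}}$ be any \cst-algebraic approximate identity of $A$. Non-degeneracy of $\pi$ again yields $\pi(e_{j})\to\one_{\mathcal{L}(E)}$ strictly, and therefore for every $b\in B$,
\[
S(e_{j})b\;=\;V^{*}\pi(e_{j})Vb\;\longrightarrow\;V^{*}Vb\;=\;b
\]
in norm. This is exactly the statement that $S(e_{j})\to\one_{\M(B)}$ strictly, i.e.\ that $S$ is non-degenerate.

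The sole external ingredient is the Hilbert-module Stinespring dilation for contractive c.p.\ maps between possibly non-unital \cst-algebras, supplying an adjointable $V$ with $\|V\|\leq1$ and a non-degenerate $\pi$; once this is granted, everything collapses to the observation that the hypothesis forces $V^{*}V=\one_{B}$. An attempted elementary proof via the Kadison--Schwarz bound $\|S(e_{j})b-b\|^{2}\leq\|b^{*}(\one-S(e_{j}))b\|$ stalls on having to upgrade the weak convergence $b^{*}S(e_{j})b\to b^{*}b$ (which one can extract from the hypothesis by pairing with states) to norm convergence---essentially the conclusion itself---so the Stinespring packaging is genuinely doing the work.
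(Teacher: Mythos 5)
Your strategy is appealing, but the step you describe as the ``sole external ingredient'' is exactly where the argument breaks down. The KSGNS dilation theorem for completely positive maps into $\mathcal{L}(E)$ \cite[Theorem~5.6]{Lance} requires the map to be \emph{strict}, i.e.\ that $\left(Se_{j}\right)_{j}$ be strictly Cauchy for a \cst-algebraic approximate identity $\left(e_{j}\right)_{j}$ of $A$ --- and that is, up to identifying the strict limit with $\one_{\M(B)}$, precisely the conclusion of the lemma. In fact the existence of a dilation $S=V^{*}\pi(\cdot)V$ with $\pi$ non-degenerate and $V$ adjointable is \emph{equivalent} to strictness, since such a dilation immediately gives $Se_{j}=V^{*}\pi(e_{j})V\to V^{*}V$ strictly; and non-strict contractive completely positive maps into multiplier algebras do exist (e.g.\ $c_{0}\ni f\mapsto\sum_{n}f(n)g_{n}\in\Cb(\R)=\M(\Cz(\R))$ for suitable disjoint bumps $g_{n}$ accumulating at a point). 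Your parenthetical fixes do not close this circle: the essential submodule $\overline{\pi(A)E}$ of a Hilbert module need not be orthogonally complemented, so one cannot simply ``restrict to it''; and passing through the unitisation (extending $S$ to a u.c.p.\ map $\tilde{S}$ on the unitisation $\tilde{A}$, which is legitimate) yields an \emph{isometric} $V$ but a possibly \emph{degenerate} $\pi=\tilde{\pi}|_{A}$, after which there is no a priori reason that $V^{*}\pi(e_{j})Vb$ should converge to $b$. So, granted the dilation, everything you write is correct, but the dilation itself presupposes what is to be proved.

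The unitisation route can, however, be completed by invoking the hypothesis at a different point: with $\tilde{S}=V^{*}\tilde{\pi}(\cdot)V$ and $V^{*}V=\tilde{S}(\one)=\one$, one checks that $Vb\in\overline{\tilde{\pi}(A)E}$ for every $b\in B$ via
\[
\bigl\langle Vb-\tilde{\pi}(a_{i})Vb,\,Vb-\tilde{\pi}(a_{i})Vb\bigr\rangle=b^{*}b-b^{*}(Sa_{i})b-b^{*}(Sa_{i})^{*}b+b^{*}S(a_{i}^{*}a_{i})b\le2b^{*}b-b^{*}(Sa_{i})b-b^{*}(Sa_{i})^{*}b\longrightarrow0,
\]
using $S(a_{i}^{*}a_{i})\le\tilde{S}(\one)=\one$ and the assumed strict convergence; then $Se_{j}\,b=V^{*}\tilde{\pi}(e_{j})Vb\to V^{*}Vb=b$, and self-adjointness of $Se_{j}$ gives the other half of strict convergence. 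This would be a legitimate alternative to the paper's argument, which avoids dilations entirely: there, Cohen factorisation and Cauchy--Schwarz give $\omega(S\left|a_{i}\right|)\to1$ for every state $\omega$ of $B$, the net $\left(\left|a_{i}\right|\right)$ is replaced by an \emph{increasing} net using the order isomorphism $a\mapsto a(\one-a)^{-1}$, and \cite[Lemma~A.4]{Kustermans_Vaes__LCQG_C_star} upgrades weak convergence of an increasing net to strict convergence --- exactly the weak-to-strict upgrade that you correctly identify as the crux and that your dilation was meant to deliver for free.
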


\begin{proof}
We should show that $\left(a_{i}\right)_{i\in\netdirset I}$ can be
replaced by a \cst-algebraic approximate identity.

Since $\left(Sa_{i}\right)_{i\in\netdirset I}$ converges strictly
to $\one_{\M(B)}$, for every state $\omega$ of $B$ we have $\omega(Sa_{i})\xrightarrow[i\in\netdirset I]{}1$
by a standard argument involving Cohen's factorisation theorem, hence
also $\omega(S\left|a_{i}\right|)\xrightarrow[i\in\netdirset I]{}1$
by the Cauchy--Schwarz inequality as $\left(a_{i}\right)_{i\in\netdirset I}$
is contractive and $\omega\circ S$ is also a state for $S$ is contractive.

We may and will assume that $\left\Vert a_{i}\right\Vert <1$ for
each $i\in\netdirset I$. This allows us to replace $\left(\left|a_{i}\right|\right)_{i\in\netdirset I}$
by an increasing net as follows. Consider the order-preserving isomorphism
$\Phi:\left\{ a\in A_{+}:\left\Vert a\right\Vert <1\right\} \to A_{+}$
given by $a\mapsto a(\one-a)^{-1}$. Let $\netdirset J$ be the directed
set of all finite subsets of $\netdirset I$ ordered by inclusion.
Since ${\big(e_{j}':=\sum_{i\in j}\Phi\left(\left|a_{i}\right|\right)\big)}_{j\in\netdirset J}$
is an increasing net in $A_{+}$ satisfying $\Phi\left(\left|a_{i}\right|\right)\le e_{j}'$
for all $i\in j\in\netdirset J$, the family ${\big(e_{j}:=\Phi^{-1}(e_{j}')\big)}_{j\in\netdirset J}$
is an increasing net in $\left\{ a\in A_{+}:\left\Vert a\right\Vert <1\right\} $
satisfying $\left|a_{i}\right|\le e_{j}$ for all $i\in j\in\netdirset J$. 

Consequently, for every state $\omega$ of $B$, from the fact that
$\omega(S\left|a_{i}\right|)\xrightarrow[i\in\netdirset I]{}1$ we
infer that $\omega(Se_{j})\xrightarrow[j\in\netdirset J]{}1$ because
$\omega\circ S$ is a state. Thus, by \cite[Lemma~A.4]{Kustermans_Vaes__LCQG_C_star},
the increasing net $\left(Se_{j}\right)_{j\in\netdirset J}$ converges
strictly to $\one_{\M(B)}$. Finally, repeating the last argument
with $S$ being the identity map on $A$ shows that $\left(e_{j}\right)_{j\in\netdirset J}$
is an approximate identity for $A$.
\end{proof}
The following is a general observation, or rather warning, that is
pertinent to the proofs of Theorem~\ref{thm:Psi_comult_crossed_prod}
and Proposition~\ref{prop:single map}.
\begin{observation}
\label{obs:two_strict_top}Let $A,B$ be \cst-algebras such that
$A$ is a non-degenerate \cst-subalgebra of $\M(B)$, i.e.~the inclusion
map is non-degenerate. We can, and throughout the paper tacitly will,
view $\M(A)$ as a \cst-subalgebra of $\M(B)$, namely the idealiser
of $A$ inside $\M(B)$ \cite[Proposition~2.3]{Lance}. Assume now
that $A,B$ are non-unital. Then the closed unit ball of $\M(A)$
has two (non-trivial) strict topologies, namely the one coming from
$\M(A)$ and the one coming from $\M(B)$. The former is finer than
the latter by one of the equivalent conditions of non-degeneracy \cite[Proposition~2.5]{Lance}.
However, these topologies \emph{do not} coincide in general when $A\nsubseteq B$.
Below we give two examples of this.

The first example, which is in the commutative case, was kindly communicated
to us by Mariusz Tobolski and one of the referees. Take $A:=\Cz(\R)$
and $B:=\Cz(\R\backslash\left\{ 0\right\} )$. Then every bounded
sequence $\left(f_{n}\right)_{n=1}^{\infty}$ in $\Cz(\R)$ satisfying
$f_{n}(x)=1$ for each $x\in\left[-n,-\frac{1}{n}\right]\cup\left[\frac{1}{n},n\right]$
and $f_{n}(0)=0$ for all $n\in\N$ converges to $\one$ in the strict
topology of $\Cb(\R\backslash\left\{ 0\right\} )=\M(\Cz(\R\backslash\left\{ 0\right\} ))$
but not in the strict topology of $\Cb(\R)=\M(\Cz(\R))$. The topological
reason behind this is that the (embedding) map $\R\backslash\left\{ 0\right\} \to\R$
is \emph{not compact covering}, i.e., there exists a compact subset
of the co-domain (here, $\left\{ 0\right\} $) that is not contained
in the image of any compact subset of the domain.

The second example was kindly communicated to us by Jacek Krajczok.
Take $A:=\Cz(\R)$ and $B:=\Cz(\R)\rtimes\R$, where the crossed product
is with respect to the (say, left) translation action. By the Stone--von
Neumann theorem (see, e.g., \cite[Theorem~4.24]{Williams__cros_prod_book})
there is a natural isomorphism $\Cz(\R)\rtimes\R\cong K(\Ltwo(\R))$.
Now, every bounded sequence $\left(f_{n}\right)_{n=1}^{\infty}$ in
$\Cz(\R)$ such that $f_{n}(0)=1$ and $\supp f_{n}\subseteq\left[-\frac{1}{n},\frac{1}{n}\right]$
for all $n\in\N$ does not converge to $0$ in the strict topology
of $\Cb(\R)=\M(\Cz(\R))$, but converges to $0$ in the strict topology
of $B(\Ltwo(\R))=\M(K(\Ltwo(\R)))$.
\end{observation}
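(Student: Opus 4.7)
The substantive content of the observation is the two examples; the containment $\M(A)\subseteq\M(B)$ via idealisers and the fact that the strict topology inherited from $\M(A)$ is finer than that inherited from $\M(B)$ are both quoted directly from \cite{Lance}. In each example I would use the basic characterisation that in $\M(\Cz(X))=\Cb(X)$, a bounded net $(h_{\alpha})$ converges strictly to $h$ precisely when $h_{\alpha}g\to hg$ in sup norm for every $g\in\Cz(X)$.

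For the first example ($A=\Cz(\R)$, $B=\Cz(\R\setminus\{0\})$), the failure of strict convergence $f_{n}\to\one$ in $\Cb(\R)$ is immediate by testing against any $g\in\Cz(\R)$ with $g(0)\neq 0$, since $f_{n}(0)=0$ for every $n$. For strict convergence in $\Cb(\R\setminus\{0\})$ the point is that functions in $\Cz(\R\setminus\{0\})$ vanish at infinity \emph{and} at $0$, so for fixed $g\in\Cz(\R\setminus\{0\})$ and $\e>0$ one can pick $\delta,M>0$ with $|g|<\e$ on $(-\delta,\delta)\cup\{|x|>M\}$; then for $n$ large enough that $1/n<\delta$ and $n>M$ the condition $f_{n}=1$ on $[-n,-1/n]\cup[1/n,n]$ forces $\|(f_{n}-\one)g\|_{\infty}\le 2\e$.

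For the second example ($A=\Cz(\R)$, $B=\Cz(\R)\rtimes\R\cong K(\Ltwo(\R))$), the failure of $f_{n}\to 0$ in the strict topology of $\Cb(\R)$ is again immediate by testing against any $g\in\Cz(\R)$ with $g(0)=1$. For strict convergence in $\M(K(\Ltwo(\R)))=B(\Ltwo(\R))$, one recalls that under the Stone--von~Neumann identification $\Cz(\R)$ embeds as multiplication operators on $\Ltwo(\R)$. The plan is then to verify that multiplication by $f_{n}$ tends to $0$ in the $*$-strong operator topology---a quick dominated-convergence computation using $\|f_{n}\|_{\infty}\le 1$ and $\supp f_{n}\subseteq[-1/n,1/n]$ gives $\|f_{n}h\|_{2}\to 0$ for every $h\in\Ltwo(\R)$, and the same for $\overline{f_{n}}$---and to invoke the standard fact that a uniformly bounded $*$-strongly null sequence composes with a compact operator on either side to a norm-null sequence.

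The one conceptual step, beyond unpacking definitions, is to notice that on bounded sets the strict topology of $\M(K(\Ltwo(\R)))=B(\Ltwo(\R))$ coincides with the $*$-strong operator topology, so that the classical compactness argument supplies the required operator-norm convergence; beyond this there is no real obstacle, and everything reduces to direct verification.
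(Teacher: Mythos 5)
Your verification is correct and follows exactly the route the paper intends: the Observation merely asserts the two examples, and your arguments (testing against a function not vanishing at $0$ for the two negative claims; the $\e$--$\delta$ estimate exploiting that $g\in\Cz(\R\setminus\{0\})$ vanishes both at $0$ and at infinity for the first positive claim; and, for the second, the identification of bounded strict convergence in $\M(K(\Ltwo(\R)))$ with $*$-strong operator convergence together with the fact that a bounded $*$-strongly null sequence multiplied by a compact operator is norm-null) are the standard ones. The only cosmetic point is that your bound $2\e$ in the first example presupposes $\left\Vert f_{n}\right\Vert _{\infty}\le1$; for a general bounded sequence it should read $(\sup_{n}\left\Vert f_{n}\right\Vert _{\infty}+1)\e$, which changes nothing.
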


\begin{observation}
\label{obs:slice_maps_nondeg_incl}Let $B,D$ be \cst-algebras. Then
we can and shall view $\M(B)\tensormin\M(D)$ as embedded (unitally)
inside $\M(B\tensormin D)$. Let also $A,C$ be \cst-algebras such
that $A\subseteq\M(B)$ and $C\subseteq\M(D)$ non-degenerately. Then
by virtue of the embedding of $A\tensormin C$ inside $\M(B)\tensormin\M(D)$
we also view $A\tensormin C$ as embedded non-degenerately inside
$\M(B\tensormin D)$ \cite[pp.~37--38]{Lance}.

For $\omega\in B^{*}$ we have the slice map ${\it \omega\tensor\i_{D}:\M(B\tensormin D)\to\M(D)}$,
and considering the restriction $\omega|_{A}\in A^{*}$ (where, of
course, we are restricting to $A$ the strict extension of $\omega$
to $\M(B)$) we also have the slice map $(\omega|_{A})\tensor\i_{C}:\M(A\tensormin C)\to\M(C)$.
Each of these maps is continuous on the closed unit ball in the respective
strict topologies \cite[p.~79]{Lance}. The assertion of this observation
is that, under the embedding $\M(A\tensormin C)\hookrightarrow\M(B\tensormin D)$,
the second slice map is the restriction of the first one. We leave
the details to the reader.
\end{observation}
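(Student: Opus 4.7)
The plan is to establish the asserted identity first on elementary tensors and then propagate it to all of $\M(A\tensormin C)$ via strict continuity, being careful about the two a priori distinct strict topologies flagged in Observation~\ref{obs:two_strict_top}. First, for an elementary tensor $a\ot c$ with $a\in A$ and $c\in C$, direct computation gives
\[
\bigl((\omega|_A)\tensor\i_C\bigr)(a\ot c)=\omega(a)\,c=(\omega\tensor\i_D)(a\ot c),
\]
where on the right $a\ot c$ is regarded as an element of $\M(B\tensormin D)$ through the non-degenerate inclusion $A\tensormin C\hookrightarrow\M(B\tensormin D)$ recalled at the beginning of the observation. Linearity and norm continuity then promote this equality to all of $A\tensormin C$.

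To pass from $A\tensormin C$ to $\M(A\tensormin C)$, I would fix $x\in\M(A\tensormin C)$ together with a contractive approximate unit $(u_i)_{i\in\netdirset I}$ of $A\tensormin C$. The bounded net $(u_ix)_{i\in\netdirset I}$ sits inside $A\tensormin C$ and converges to $x$ in the strict topology of $\M(A\tensormin C)$. Because $A\tensormin C$ is non-degenerately embedded in $\M(B\tensormin D)$, the preamble of Observation~\ref{obs:two_strict_top} guarantees that the $\M(A\tensormin C)$-strict topology is finer than the one inherited from $\M(B\tensormin D)$, so $(u_ix)_{i\in\netdirset I}$ also converges to $x$ strictly in $\M(B\tensormin D)$. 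Strict continuity of $\omega\tensor\i_D$ on the closed unit ball of $\M(B\tensormin D)$ therefore yields $(\omega\tensor\i_D)(u_ix)\to(\omega\tensor\i_D)(x)$ strictly in $\M(D)$.

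On the other side, strict continuity of $(\omega|_A)\tensor\i_C$ on the unit ball of $\M(A\tensormin C)$ produces $\bigl((\omega|_A)\tensor\i_C\bigr)(u_ix)\to\bigl((\omega|_A)\tensor\i_C\bigr)(x)$ strictly in $\M(C)$, and applying the same non-degeneracy comparison of strict topologies to $C\subseteq\M(D)$ upgrades this to strict convergence in $\M(D)$. Since the two nets coincide term-by-term by the first step, their common strict limit in $\M(D)$ delivers the desired identity. The only real subtlety is precisely the one Observation~\ref{obs:two_strict_top} warns about: the comparison of strict topologies must be used in the right direction, which is why the argument starts in the $\M(A\tensormin C)$-strict topology and transfers downward, rather than attempting the reverse; the rest of the argument is a standard reduction to elementary tensors.
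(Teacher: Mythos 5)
Your argument is correct and is precisely the routine verification the paper leaves to the reader: establish the identity on $A\tensormin C$ and propagate it to $\M(A\tensormin C)$ by strict density, invoking the comparison of strict topologies on bounded sets in the correct (easy) direction, as is also done in the proof of Theorem~\ref{thm:Psi_comult_crossed_prod}. The only step you compress is the evaluation of the strictly extended slice map $\omega\tensor\i_{D}$ on an elementary tensor $a\tensor c$ with $a\in\M(B)$, $c\in\M(D)$, which itself needs a short approximate-unit computation to yield $\omega|_{A}(a)\,c$, but this is standard.
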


\section{Locally compact quantum groups and cocycle twisting\label{sec:LCQGCT}}

In this section we introduce locally compact quantum group preliminaries
and discuss the cocycle twisting in this context, focusing on cocycles
coming from closed abelian subgroups. 

\subsection{Locally compact quantum groups}

A \emph{locally compact quantum group} in its von Neumann algebraic
form as introduced by Kustermans and Vaes is a pair $\G=\left(\Linfty(\G),\Delta_{\G}\right)$
such that $\Linfty(\G)$ is a von Neumann algebra and $\Delta_{\G}:\Linfty(\G)\to\Linfty(\G)\tensorn\Linfty(\G)$
is a co-multiplication, i.e.~a normal faithful unital $*$-homomorphism
that is co-associative: $(\Delta_{\G}\tensor\i)\circ\Delta_{\G}=(\i\tensor\Delta_{\G})\circ\Delta_{\G}$,
such that there exist left and right Haar weights for $\Delta_{\G}$.
The Haar weights are unique up to scaling. We refer to \cite{Kustermans__LCQG_universal,Kustermans_Vaes__LCQG_C_star,Kustermans_Vaes__LCQG_von_Neumann,Van_Daele__LCQGs}
for the full details. 

When the context is clear, we drop the $\G$ in symbols associated
to $\G$ and adorn with hats symbols associated to the dual locally
compact quantum group $\hat{\G}$, e.g.~$\Delta:=\Delta_{\G}$ and
$\hat{\Delta}:=\Delta_{\hat{\G}}$.

In addition to the von Neumann algebra $\Linfty(\G)$, two \cst-algebras
providing different perspectives on $\G$ are the reduced \cst-algebra
$\Cz(\G)$ and the universal \cst-algebra $\CzU(\G)$. Each carries
its own version of the co-multiplication.

We work with the left Haar weight $\varphi$ and left regular representation
$W$, which implements $\Delta$ by $\Delta(x)=W^{*}(\one\tensor x)W$
for all $x\in\Linfty(\G)$. Identifying the GNS Hilbert spaces $\Ltwo(\Linfty(\G),\varphi)$
and $\Ltwo(\Linfty(\hat{\G}),\hat{\varphi})$ as customary and denoting
the common Hilbert space by $\Ltwo(\G)$, we view $\Linfty(\G)$ and
$\Linfty(\hat{\G})$ as acting on it. The GNS map $\mathcal{N}_{\varphi}\to\Ltwo(\G)$
of $\varphi$ is denoted by $\Lambda$. We write $\nabla,J$ for the
modular operator and modular conjugation of $\varphi$ (on $\Ltwo(\G)$)
and $\left(\sigma_{t}\right)_{t\in\R}$ for the modular automorphism
group of $\varphi$. We will occasionally need the commutant quantum
group of $\G$, denoted $\G'$ (so that $\Linfty(\G'):=\Linfty(\G)'$
inside $B(\Ltwo(\G))$).

We put $\Lone(\G):=\Linfty(\G)_{*}$ and note the density condition
\begin{equation}
\clinspan^{\sigma\text{-weak}}\left\{ (\i\tensor\omega)\left(\Delta(x)\right):\omega\in\Lone(\G),x\in\Linfty(\G)\right\} =\Linfty(\G).\label{eq:comult_density}
\end{equation}
The antipode, the unitary antipode, and the scaling group of $\G$
will be denoted $\Sant$, $\Rant$, and $\left(\tau_{t}\right)_{t\in\R}$,
respectively. We shall use, sometimes tacitly, the formula $\Rant(x)=\hat{J}x^{*}\hat{J}$
for $x\in\Linfty(\G)$, and also require the formulas 
\begin{gather}
(\Rant\tensor\hat{\Rant})(W)=W,\text{ i.e. }(\Ad(\hat{J}\tensor J))(W)=W^{*},\label{eq:W_R_R_hat}\\
\Delta\circ\Rant=\sigma\circ(\Rant\tensor\Rant)\circ\Delta,\label{eq:Delta_R}
\end{gather}
where $\sigma$ is the algebra-level flip operator, and 
\begin{equation}
\left(\tau_{t}\tensor\sigma_{t}\right)\circ\Delta=\Delta\circ\sigma_{t}\qquad(\forall_{t\in\R}).\label{eq:comult_tau_sigma}
\end{equation}

We end this subsection with two notations. First, define $\mathcal{I}$
as the subspace of $\Lone(\G)$ consisting of all $\omega\in\Lone(\G)$
such that the conjugate-linear map $\Lambda(x)\mapsto\omega(x^{*})$,
$x\in\mathcal{N}_{\varphi}$, is bounded; equivalently, such that
there exists (uniquely) $\xi\in\Ltwo(\G)$ with $\omega(x^{*})=\left\langle \xi,\Lambda(x)\right\rangle $
for all $x\in\mathcal{N}_{\varphi}$. Second, define $\LoneS(\G)$
to be the subspace of $\Lone(\G)$ consisting of all $\omega\in\Lone(\G)$
such that some (unique) element of $\Lone(\G)$, denoted by $\omega^{\sharp}$,
satisfies $\omega^{\sharp}(x)=\overline{\omega(\Sant(x)^{*})}$ for
all $x\in D(\Sant)$.

\subsection{Cocycle twisting of locally compact quantum groups\label{subsec:cocycle_twisting}}

We take the general approach to the Rieffel deformations of locally
compact quantum groups based on the work of De~Commer on Galois objects
and cocycle twisting \cite{DeCommer__Galois_obj_cocycle_twist_LCQG,DeCommer__PhD},
which is introduced in this subsection. Let $\QG$ be a locally compact
quantum group and $\Omega$ be a (measurable, unitary) \emph{$2$-cocycle}
on the dual $\hat{\G}$, that is, a unitary in $\Linfty(\hat{\G})\tensorn\Linfty(\hat{\G})$
such that 
\begin{equation}
(\Omega\tensor\one)\cdot(\hat{\Delta}\tensor\i)(\Omega)=(\one\tensor\Omega)\cdot(\i\tensor\hat{\Delta})(\Omega).\label{eq:2_cocycle}
\end{equation}
The map $\hat{\Delta}_{\Omega}:\Linfty(\hat{\G})\to\Linfty(\hat{\G})\tensorn\Linfty(\hat{\G})$
given by 
\[
\hat{\Delta}_{\Omega}:=(\Ad\Omega)\circ\hat{\Delta}
\]
is a co-multiplication on $\Linfty(\hat{\G})$ by (\ref{eq:2_cocycle}).
It is a deep result that $\hat{\Delta}_{\Omega}$ admits left and
right invariant weights, namely that $\hat{\G}_{\Omega}:=(\Linfty(\hat{\G}),\hat{\Delta}_{\Omega})$
is a (von Neumann algebraic) locally compact quantum group \cite[Theorem~6.2]{DeCommer__Galois_obj_cocycle_twist_LCQG},
called the \emph{cocycle twisting} of $\hat{\G}$ by $\Omega$. Note
that $\Omega^{*}$ may be viewed as a (measurable, unitary) $2$-cocycle
on $\hat{\G}_{\Omega}$ and we naturally have that $\hat{\G}$ is
the cocycle twisting of $\hat{\G}_{\Omega}$ by $\Omega^{*}$.

The analysis of $\hat{\G}_{\Omega}$, and in particular the formula
for its left regular representation, relies on the following construction
of \cite[p.~97]{DeCommer__Galois_obj_cocycle_twist_LCQG} based on
\cite{Vaes_Vainerman__extensions_LCQGs}, see also \cite[p.~464]{Neshveyev_Tuset__deform_C_alg_cocycle_LCQG}.
Consider the trivial left action of $\hat{\G}$ on $\C$, which is
just the unital homomorphism $\C\to\Linfty(\hat{\G})\tensor\C\cong\Linfty(\hat{\G})$.
Together with $\Omega$ it forms a cocycle action, and the associated
cocycle (or twisted) crossed\emph{ }product is the von Neumann algebra
\[
\hat{\G}\pres{}{\Omega}{\ltimes}\C=\overline{\big\{(\omega\tensor\i)(\hat{W}\Omega^{*}):\omega\in\Lone(\hat{\G})\big\}}^{\sigma\text{-weak}}\subseteq B(\Ltwo(\G)).
\]
The dual of this trivial action is the right action $\a:\hat{\G}\pres{}{\Omega}{\ltimes}\C\to(\hat{\G}\pres{}{\Omega}{\ltimes}\C)\tensorn\Linfty(\G)$
of $\G$ on $\hat{\G}\pres{}{\Omega}{\ltimes}\C$ determined by $(\i\tensor\a)(\hat{W}\Omega^{*})=\hat{W}_{13}{(\hat{W}\Omega^{*})}_{12}$
(here the dual action is in the sense of \cite[Proposition~1.4]{Vaes_Vainerman__extensions_LCQGs},
and in particular it is a right action; note however that \cite{Vaes_Vainerman__extensions_LCQGs}
uses in this context the equivalent concept of a left action of the
opposite locally compact quantum group). Denoting by $\theta$ the
(unique) state on $\C$, its dual weight in the sense of \cite[Definition~1.13]{Vaes_Vainerman__extensions_LCQGs}
is the n.s.f.~weight $\tilde{\theta}$ on $\hat{\G}\pres{}{\Omega}{\ltimes}\C$
given by $(\i\tensor\varphi)\circ\a$; the image of this operator-valued
weight is $\C\one$, which we identify with $\C$. With respect to
the natural representation of $\hat{\G}\pres{}{\Omega}{\ltimes}\C$
on $\Ltwo(\G)$ this dual weight $\tilde{\theta}$ has a GNS construction
on $\Ltwo(\G)$ given by 
\begin{equation}
\tilde{\Lambda}((\omega\tensor\i)(\hat{W}\Omega^{*}))=\Lambda((\omega\tensor\i)(\hat{W}))\qquad(\forall_{\omega\in\hat{\mathcal{I}}}).\label{eq:Lambda_tilde}
\end{equation}
Denote by $\tilde{\nabla},\tilde{J}$ the modular operator and modular
conjugation of $\tilde{\theta}$ pertaining to this GNS construction.
The canonical unitary implementation of the cocycle action formed
by the trivial action of $\hat{\G}$ on $\C$ and $\Omega$ is precisely
\begin{equation}
X:=\tilde{J}J\in\mathcal{U}(\Ltwo(\G)),\label{eq:unitary_X}
\end{equation}
which actually belongs to $\Linfty(\hat{\G})$ and is invariant under
the dual unitary antipode, i.e.~$JXJ=X^{*}$ \cite[Definition~1.18 and Proposition~1.19]{Vaes_Vainerman__extensions_LCQGs}
(that $X\in\Linfty(\hat{\G})$ is also proved in \cite[p.~99, l.~3,4]{DeCommer__Galois_obj_cocycle_twist_LCQG}).

The left regular representation of $\hat{\G}_{\Omega}$, when acting
on $\Ltwo(\G)\tensor\Ltwo(\G)$, is 
\begin{equation}
\hat{W}_{\Omega}=(\tilde{J}\tensor\hat{J})\Omega\hat{W}^{*}(J\tensor\hat{J})\Omega^{*}\label{eq:W_hat_Omega}
\end{equation}
\cite[Proposition~6.5]{DeCommer__Galois_obj_cocycle_twist_LCQG}.
Employing the unitary $X$ above, we obtain 
\begin{equation}
\begin{split}\hat{W}_{\Omega} & =(JX^{*}\tensor\hat{J})\Omega\hat{W}^{*}(J\tensor\hat{J})\Omega^{*}\\
 & =(J\tensor\hat{J})(X^{*}\tensor\one)\Omega(J\tensor\hat{J})(J\tensor\hat{J})\hat{W}^{*}(J\tensor\hat{J})\Omega^{*}\\
 & =(J\tensor\hat{J})(X^{*}\tensor\one)\Omega(J\tensor\hat{J})\hat{W}\Omega^{*}.
\end{split}
\label{eq:W_hat_Omega_clearer}
\end{equation}

The von Neumann algebra $\Linfty(\hat{\G})=\Linfty(\hat{\G}_{\Omega})$
is equipped with the left Haar weights $\hat{\varphi},\hat{\varphi}_{\Omega}$
of $\hat{\G},\hat{\G}_{\Omega}$, respectively. The modular conjugation
$\hat{J}_{\Omega}$ of $\hat{\varphi}_{\Omega}$, when viewed canonically
as acting on $\Ltwo(\G)=\Ltwo(\Linfty(\hat{\G}),\hat{\varphi})$,
equals the modular conjugation $\hat{J}$ of $\hat{\varphi}$, because
the canonical unitary operator that intertwines the GNS representations
of $\Linfty(\hat{\G})$ on $\Ltwo(\Linfty(\hat{\G}),\hat{\varphi})$
and $\Ltwo(\Linfty(\hat{\G}),\hat{\varphi}_{\Omega})$ is the same
unitary that intertwines $\hat{J}$ and $\hat{J}_{\Omega}$ (by a
general fact on von Neumann algebras \cite[Lemma~IX.1.5 and its proof]{Takesaki__book_vol_2}).
Note that this unitary was used in the proof of \cite[Proposition~6.5]{DeCommer__Galois_obj_cocycle_twist_LCQG}
to have the right leg of $\hat{W}_{\Omega}$ in (\ref{eq:W_hat_Omega})
act on $\Ltwo(\G)=\Ltwo(\Linfty(\hat{\G}),\hat{\varphi})$ (in addition
to the left leg). See also \cite[text succeeding eq.~(2.4)]{Neshveyev_Tuset__deform_C_alg_cocycle_LCQG}.

In this paper we are interested in the dual locally compact quantum
group $\G^{\Omega}:=\widehat{\hat{\G}_{\Omega}}$, whose $\Linfty$-algebra
we also view as acting on $\Ltwo(\G)=\Ltwo(\Linfty(\hat{\G}),\hat{\varphi})$
canonically. By the previous paragraph and a general formula stated
earlier, the unitary antipode $\Rant_{\G^{\Omega}}$ is implemented
by $\hat{J}$: 
\begin{equation}
\Rant_{\G^{\Omega}}(x)=\hat{J}x^{*}\hat{J}\qquad(\forall_{x\in\G^{\Omega}}).\label{eq:R_G_sup_Omega}
\end{equation}
Under certain assumptions, $\G^{\Omega}$ can be seen as the Rieffel
deformation of $\G$ with respect to $\Omega$. This is explained
in the next section.

We finish this subsection with a few more facts. The operators $\big(v_{t}:=\tilde{\nabla}^{it}\nabla^{-it}\big)_{t\in\R}$
belong to $\Linfty(\hat{\G})$ \cite[p.~99, l.~2,3]{DeCommer__Galois_obj_cocycle_twist_LCQG}
and form a $1$-cocycle for $\hat{\tau}$, i.e.~$v_{s+t}=v_{s}\hat{\tau}_{s}(v_{t})$
for all $s,t\in\R$ \cite[Proposition~6.3, (i)]{DeCommer__Galois_obj_cocycle_twist_LCQG}.
Then the scaling group $\hat{\tau}_{\Omega}=\left(\hat{\tau}_{\Omega,t}\right)_{t\in\R}$
of $\hat{\G}_{\Omega}$ equals $\left(\Ad(v_{t})\circ\hat{\tau}_{t}\right)_{t\in\R}$.
Indeed, this is spelled out in \cite[l.~19,20 in proof of Proposition~9.1.6]{DeCommer__PhD}.
Since the last reference is unpublished, we note the following alternative
simple proof. Consider the Connes cocycle derivative $\left(u_{t}:=\left[D\hat{\varphi}_{\Omega}:D\hat{\varphi}\right]_{t}\right)_{t\in\R}$
in $\Linfty(\hat{\G})$. Using the fact that the modular automorphism
group $\hat{\sigma}_{\Omega}=\left(\hat{\sigma}_{\Omega,t}\right)_{t\in\R}$
of $\hat{\varphi}_{\Omega}$ equals $\left(\Ad(u_{t})\circ\hat{\sigma}_{t}\right)_{t\in\R}$
as well as the formula 
\[
\hat{\Delta}\left(u_{t}\right)\cdot\left(\hat{\tau}_{t}\tensor\hat{\sigma}_{t}\right)\left(\Omega^{*}\right)=\Omega^{*}\cdot(v_{t}\tensor u_{t})\qquad(\forall_{t\in\R})
\]
(see \cite[proof of Proposition~6.3]{DeCommer__Galois_obj_cocycle_twist_LCQG})
and the analogue of (\ref{eq:comult_tau_sigma}) for $\hat{\G}$,
one calculates that 
\[
\left(\left(\Ad(v_{t})\circ\hat{\tau}_{t}\right)\tensor\hat{\sigma}_{\Omega,t}\right)\circ\hat{\Delta}_{\Omega}=\hat{\Delta}_{\Omega}\circ\hat{\sigma}_{\Omega,t}\qquad(\forall_{t\in\R}).
\]
This implies that $\hat{\tau}_{\Omega}=\left(\Ad(v_{t})\circ\hat{\tau}_{t}\right)_{t\in\R}$
because of the analogues of (\ref{eq:comult_density}) and (\ref{eq:comult_tau_sigma})
for $\hat{\G}_{\Omega}$.
\begin{lem}
\label{lem:Omega_scal_grp_inv}If $\left(\hat{\tau}_{t}\tensor\hat{\tau}_{t}\right)\left(\Omega\right)=\Omega$
for all $t\in\R$, then the positive selfadjoint operators $\tilde{\nabla},\nabla$
strongly commute. 
\end{lem}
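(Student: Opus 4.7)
The plan is to reformulate strong commutation of $\tilde{\nabla}$ and $\nabla$ as $\hat{\tau}$-invariance of the cocycle $(v_{t})_{t\in\R}$, and then to extract this invariance from the identity $\hat{\Delta}(u_{t})\cdot(\hat{\tau}_{t}\tensor\hat{\sigma}_{t})(\Omega^{*})=\Omega^{*}\cdot(v_{t}\tensor u_{t})$ recorded just above the lemma.

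First I would observe that strong commutation of $\tilde{\nabla}$ and $\nabla$ is equivalent to $v_{s+t}=v_{s}v_{t}$ for all $s,t\in\R$: expanding both sides by means of $\tilde{\nabla}^{i(s+t)}=\tilde{\nabla}^{is}\tilde{\nabla}^{it}$ and $\nabla^{-i(s+t)}=\nabla^{-is}\nabla^{-it}$ reduces the equality to the commutativity of $\tilde{\nabla}^{it}$ and $\nabla^{-is}$. Using the $\hat{\tau}$-cocycle identity $v_{s+t}=v_{s}\hat{\tau}_{s}(v_{t})$ recalled before the lemma, the claim thus reduces to proving that $\hat{\tau}_{s}(v_{t})=v_{t}$ for all $s,t\in\R$.

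The next step is to verify that, under the hypothesis, $\hat{\tau}_{s}(u_{t})$ is a unimodular scalar multiple of $u_{t}$. A short computation using the hypothesis together with $(\hat{\tau}_{s}\tensor\hat{\tau}_{s})\circ\hat{\Delta}=\hat{\Delta}\circ\hat{\tau}_{s}$ yields $(\hat{\tau}_{s}\tensor\hat{\tau}_{s})\circ\hat{\Delta}_{\Omega}=\hat{\Delta}_{\Omega}\circ\hat{\tau}_{s}$, whence $\hat{\varphi}_{\Omega}\circ\hat{\tau}_{s}$ is left-invariant for $\hat{\Delta}_{\Omega}$. Uniqueness of the left Haar weight on $\hat{\G}_{\Omega}$ then supplies a constant $\nu_{\Omega}>0$ with $\hat{\varphi}_{\Omega}\circ\hat{\tau}_{s}=\nu_{\Omega}^{-s}\hat{\varphi}_{\Omega}$. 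Combining this with the standard identity $\hat{\varphi}\circ\hat{\tau}_{s}=\nu^{-s}\hat{\varphi}$ and the general Connes cocycle transformation rule $[D(\psi\circ\alpha):D(\varphi\circ\alpha)]_{t}=\alpha^{-1}([D\psi:D\varphi]_{t})$ applied with $\alpha=\hat{\tau}_{s}$ gives $\hat{\tau}_{s}(u_{t})=(\nu_{\Omega}/\nu)^{ist}u_{t}=:c_{s,t}u_{t}$.

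Finally I would apply $\hat{\tau}_{s}\tensor\hat{\tau}_{s}$ to both sides of the identity above. On the left, the scaling group property and the previous paragraph transform $\hat{\Delta}(u_{t})$ into $\hat{\Delta}(\hat{\tau}_{s}(u_{t}))=c_{s,t}\hat{\Delta}(u_{t})$, while $(\hat{\tau}_{t}\tensor\hat{\sigma}_{t})(\Omega^{*})$ is preserved because $\hat{\tau}$ and $\hat{\sigma}$ commute and the hypothesis applies to the inner $\hat{\tau}_{s}\tensor\hat{\tau}_{s}$. On the right one obtains $c_{s,t}\Omega^{*}\cdot(\hat{\tau}_{s}(v_{t})\tensor u_{t})$. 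Cancelling $c_{s,t}$ and comparing with the original identity yields $v_{t}\tensor u_{t}=\hat{\tau}_{s}(v_{t})\tensor u_{t}$, and the unitarity of $u_{t}$ then forces $\hat{\tau}_{s}(v_{t})=v_{t}$, completing the proof via the first step. The most delicate point is the scaling argument for $\hat{\varphi}_{\Omega}$ in the second step; the remaining computations amount to bookkeeping of the scalar $c_{s,t}$, which cancels on both sides of the identity.
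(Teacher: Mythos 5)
Your proof is correct, and it reaches the paper's key intermediate conclusion --- that $\left(v_{t}\right)_{t\in\R}$ is a one-parameter unitary group, which by the classical result on polar-type decompositions is equivalent to strong commutation of $\tilde{\nabla}$ and $\nabla$ --- by a genuinely different route. The paper invokes De~Commer's Proposition~6.3\,(ii), the identity $\hat{\Delta}(v_{t})\cdot(\hat{\tau}_{t}\tensor\hat{\tau}_{t})(\Omega^{*})=\Omega^{*}\cdot(v_{t}\tensor v_{t})$, which under the invariance hypothesis immediately gives $\hat{\Delta}_{\Omega}(v_{t})=v_{t}\tensor v_{t}$; each $v_{t}$ is then a group-like unitary of $\hat{\G}_{\Omega}$, hence $\hat{\tau}_{\Omega}$-invariant, and combining this with $\hat{\tau}_{\Omega,s}=\Ad(v_{s})\circ\hat{\tau}_{s}$ and the $1$-cocycle identity yields the group law. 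You instead work only with the $u_{t}$-identity recorded in the preamble, first establishing that $\hat{\varphi}_{\Omega}\circ\hat{\tau}_{s}$ is left-invariant for $\hat{\Delta}_{\Omega}$ (valid, since $(\hat{\tau}_{s}\tensor\hat{\tau}_{s})\circ\hat{\Delta}_{\Omega}=\hat{\Delta}_{\Omega}\circ\hat{\tau}_{s}$ under the hypothesis), invoking uniqueness of the Haar weight and the transformation rules for Connes cocycles to get $\hat{\tau}_{s}(u_{t})\in\mathbb{T}u_{t}$, and then cancelling the resulting scalar against both sides of the identity to extract $\hat{\tau}_{s}(v_{t})=v_{t}$ directly; the $1$-cocycle identity then gives the group law. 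Your checks are all sound: the scalar is unimodular because it is a positive number raised to an imaginary power (its exact exponential form is irrelevant since it cancels), the unperturbed factor $(\hat{\tau}_{t}\tensor\hat{\sigma}_{t})(\Omega^{*})$ is indeed fixed because $\hat{\tau}$ and $\hat{\sigma}$ commute, and $u_{t}$ is a unitary so it can be stripped off the second leg. The trade-off is that the paper's argument is shorter and avoids modular-theoretic bookkeeping by using a cleaner identity purely in terms of $v_{t}$ together with the elegant fact that group-like unitaries are scaling-invariant, whereas your argument gets by with less specific input from De~Commer's paper (only the identity already quoted in the text) at the price of a longer computation through the relative invariance of Haar weights and Connes cocycle calculus; both ultimately rest on $\hat{\G}_{\Omega}$ being a locally compact quantum group.
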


\begin{proof}
Recall the operators $v_{t}$ introduced in the beginning of the last
paragraph. From \cite[Proposition~6.3, (ii)]{DeCommer__Galois_obj_cocycle_twist_LCQG}
and the invariance assumption we get that 
\[
\Omega^{*}\left(v_{t}\tensor v_{t}\right)=\hat{\Delta}\left(v_{t}\right)\left(\hat{\tau}_{t}\tensor\hat{\tau}_{t}\right)\left(\Omega^{*}\right)=\hat{\Delta}\left(v_{t}\right)\Omega^{*},
\]
that is, $\hat{\Delta}_{\Omega}\left(v_{t}\right)=v_{t}\tensor v_{t}$,
for all $t\in\R$. So each $v_{t}$ is a one-dimensional representation
of $\hat{\G}_{\Omega}$, and is thus invariant under the scaling group
$\hat{\tau}_{\Omega}$. Using the facts that $\hat{\tau}_{\Omega}=\left(\Ad(v_{t})\circ\hat{\tau}_{t}\right)_{t\in\R}$
and that $\left(v_{t}\right)_{t\in\R}$ is a $1$-cocycle for $\hat{\tau}$,
we obtain 
\[
v_{t}=\hat{\tau}_{\Omega,s}(v_{t})=v_{s}\hat{\tau}_{s}\left(v_{t}\right)v_{s}^{*}=v_{s+t}v_{s}^{*}\qquad(\forall_{s,t\in\R}),
\]
namely $\left(v_{t}\right)_{t\in\R}$ is a one-parameter unitary group.
By the definition of $\left(v_{t}\right)_{t\in\R}$ and a classical
result \cite[Exercise~E.9.24]{Stratila_Zsido__lectures_vN}, this
means that the positive selfadjoint operators $\tilde{\nabla},\nabla$
strongly commute. 
\end{proof}

\subsection{Cocycle twisting via locally compact abelian subgroups\label{subsec:cocyc_twist_lcag}}

We lay out the hypothesis of this subsection and of paper's main results.
Note that for a locally compact group, the two notions of being a
closed quantum subgroup of a locally compact quantum group, due to
Vaes and to Woronowicz \cite{Daws_Kasprzak_Skalski_Soltan__closed_q_subgroups_LCQGs},
coincide by \cite[Corollary~5.7]{Daws__Categorical_asp_QG_mult_intr_grp}.
\begin{standinghypothesis}
\label{stand_hyp}Let a locally compact abelian group $\Gamma$ be
a closed quantum subgroup of a locally compact quantum group $\G$.
Let $\Psi$ be a normalised continuous $2$-cocycle on $\hat{\Gamma}$. 
\end{standinghypothesis}

The (Vaes) closed quantum subgroup assumption means that there exists
an injective normal $*$-homomorphism $L:\Linfty(\hat{\Gamma})\to\Linfty(\hat{\G})$
intertwining the co-multiplications (and thus also automatically the
unitary antipodes, scaling groups, etc.). Viewing $\Psi$ as an element
of $\mathcal{U}(\Linfty(\hat{\Gamma})\tensorn\Linfty(\hat{\Gamma}))$,
the operator $\Omega:=(L\tensor L)(\Psi)$ is a $2$-cocycle on the
locally compact quantum group $\hat{\G}$, and we can form the locally
compact quantum groups $\hat{\G}_{\Psi}:=\hat{\G}_{\Omega}$ and $\GPsi:=\widehat{\hat{\G}_{\Psi}}$
as in the previous subsection. 

Define $R:\Linfty(\hat{\Gamma})\to\Linfty(\hat{\G}')$ by $R:=\Ad(\hat{J}J)\circ L$.
Since $L$ intertwines the co-multiplications, 
\begin{equation}
\hat{W}^{*}(\one\tensor L(g))\hat{W}=(L\tensor L)(\Delta_{\hat{\Gamma}}(g))\text{ and }\hat{W}(\one\tensor R(g))\hat{W}^{*}=(L\tensor R)(\Delta_{\hat{\Gamma}}(g))\qquad(\forall_{g\in\Linfty(\hat{\Gamma})})\label{eq:L_R_Delta}
\end{equation}
(the second formula follows from (\ref{eq:W_R_R_hat}), (\ref{eq:Delta_R}),
and $\hat{\Gamma}$ being abelian; the last fact is used to deduce
that the formula above, first checked for $g$ being a character,
is valid for arbitrary $g$). Write $\left(L_{\gamma}\right)_{\gamma\in\Gamma}$
and $\left(R_{\gamma}\right)_{\gamma\in\Gamma}$ for the representation
of $\Gamma$ on $\Ltwo(\G)$ associated with the morphisms $L|_{\Cz(\hat{\Gamma})}\in\Mor(\Cz(\hat{\Gamma}),\Cz(\hat{\G}))$
and $R|_{\Cz(\hat{\Gamma})}\in\Mor(\Cz(\hat{\Gamma}),\Cz(\hat{\G}'))$.
Equivalently, for all $\gamma\in\Gamma$,
\[
L_{\gamma}:=L(\gamma)\text{ and }R_{\gamma}:=\hat{J}L_{\gamma}\hat{J},
\]
where as usual we treat $\gamma$ as a function in $\Cb(\hat{\Gamma})\subseteq\Linfty(\hat{\Gamma})$.
Since $\gamma$ is group-like in $\Linfty(\hat{\Gamma})$, so is $L_{\gamma}$
in $\Linfty(\hat{\G})$: 
\begin{gather}
\hat{\Delta}(L_{\gamma})=L_{\gamma}\tensor L_{\gamma},\text{ equivalently: }\hat{W}^{*}(\one\tensor L_{\gamma})\hat{W}=L_{\gamma}\tensor L_{\gamma},\label{eq:L_gamma_group_like}\\
\text{and thus }\hat{W}(\one\tensor R_{\gamma})\hat{W}^{*}=L_{\gamma}\tensor R_{\gamma}.\label{eq:R_gamma_group_like}
\end{gather}
(The map $\Gamma\ni\gamma\mapsto L_{\gamma}$, as an element of $\Linfty(\Gamma)\tensorn\Linfty(\hat{\G})$,
is the bicharacter associated with $\Gamma$ being a closed quantum
subgroup of $\G$ \cite{Meyer_Roy_Woronowicz__hom_quant_grps,Daws_Kasprzak_Skalski_Soltan__closed_q_subgroups_LCQGs}
up to left/right and Fourier transform conventions).

Before we continue, we give a concrete formula for the unitary $X$
of (\ref{eq:unitary_X}) in the current setting.
\begin{prop}
\label{prop:X_equals_U}The unitary $X=\tilde{J}J\in\Linfty(\hat{\G})$
equals $L(u)$ where $u\in\Linfty(\hat{\Gamma})$ is given by 
\begin{equation}
u(\hat{\gamma}):=\Psi(-\hat{\gamma},\hat{\gamma}),\qquad\hat{\gamma}\in\hat{\Gamma}.\label{eq:u}
\end{equation}
\end{prop}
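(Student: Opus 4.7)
The plan is to verify the identity $X = L(u)$ directly, by computing the unitary $X = \tilde{J}J$ on a dense subspace of $\Ltwo(\G)$ coming from (\ref{eq:Lambda_tilde}). For a sufficiently regular $\omega \in \hat{\mathcal{I}}$, set $a := (\omega\tensor\i)(\hat{W})$ and $b := (\omega\tensor\i)(\hat{W}\Omega^{*})$, so that $\tilde{\Lambda}(b) = \Lambda(a)$; then $X\Lambda(a) = \tilde{J}J\Lambda(a)$, and the question reduces to computing $\tilde{J}\tilde{\Lambda}(b)$ and $J\Lambda(a)$ and comparing them.

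To compute $\tilde{J}$, one needs to express the involutive partner of $b$ --- modulo the modular automorphism group of $\tilde{\theta}$ --- again in the form $(\omega'\tensor\i)(\hat{W}\Omega^{*})$, so that (\ref{eq:Lambda_tilde}) can be reapplied. The necessary manipulations rely on the 2-cocycle identity (\ref{eq:2_cocycle}) for $\Omega$, the pentagonal equation for $\hat{W}$, and the group-like relation (\ref{eq:L_gamma_group_like}); together these encode how the abelian cocycle $\Omega$ interacts with $\hat{W}$. The outcome should be the vector $\Lambda(a^{*})$ (up to modular factors) multiplied on the left by an element of $\Linfty(\hat{\G})$ lying in the subalgebra $L(\Linfty(\hat{\Gamma}))$. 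Under the spectral identification $L(\Linfty(\hat{\Gamma}))\cong\Linfty(\hat{\Gamma})$ this multiplier becomes a concrete function on $\hat{\Gamma}$, and applying the 2-cocycle identity at the triple $(-\hat{\gamma},\hat{\gamma},\cdot)$ together with the normalisation conventions collapses it to $\hat{\gamma}\mapsto\Psi(-\hat{\gamma},\hat{\gamma}) = u(\hat{\gamma})$.

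The main obstacle is the disentanglement of the two modular structures on $\Linfty(\hat{\G})$: $\tilde{\theta}$ and $\hat{\varphi}$ have different modular automorphism groups, linked via the operators $v_{t}=\tilde{\nabla}^{it}\nabla^{-it}\in\Linfty(\hat{\G})$ discussed after Lemma~\ref{lem:Omega_scal_grp_inv}, and one has to ensure that the modular correction introduced when inverting $b$ eventually collapses into a factor coming purely from the cocycle and not from the ambient quantum group. As a consistency check, the identity $JXJ = X^{*}$ recorded in Subsection~\ref{subsec:cocycle_twisting} translates, via $\hat{\Rant}\circ L = L\circ\Rant_{\hat{\Gamma}}$ (inversion on $\hat{\Gamma}$), into the equation $\Psi(\hat{\gamma},-\hat{\gamma}) = \Psi(-\hat{\gamma},\hat{\gamma})$, which in turn follows from (\ref{eq:normal_2_cocycle_U}) together with normalisation; this confirms that the candidate $L(u)$ satisfies the same structural constraint as $X$.
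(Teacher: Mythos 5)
Your overall strategy coincides with the paper's: the heart of the matter is indeed to rewrite $\big((\omega\tensor\i)(\hat{W}\Omega^{*})\big)^{*}$ again in the form $(\omega'\tensor\i)(\hat{W}\Omega^{*})$, and the paper's computation shows $\omega'=L(u)\,\omega^{\sharp}$ for $\omega\in\LoneS(\hat{\G})$, using the group-like relation (\ref{eq:L_gamma_group_like}), the behaviour of $\omega\mapsto\omega^{\sharp}$ under multiplication by the group-likes $L_{\gamma}$, and the normalised-cocycle identity (\ref{eq:normal_2_cocycle_U}) exactly as you indicate (the pentagonal equation plays no role). Feeding this through (\ref{eq:Lambda_tilde}) on a suitable core yields the inclusion of closed antilinear operators $L(u)\,T\subseteq\tilde{T}$, i.e.\ $L(u)J\nabla^{1/2}\subseteq\tilde{J}\tilde{\nabla}^{1/2}$.

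The step you flag as ``the main obstacle'' is, however, exactly where your argument is incomplete, and it is not a formality. From the involutions one only ever gets hold of $T=J\nabla^{1/2}$ and $\tilde{T}=\tilde{J}\tilde{\nabla}^{1/2}$, never of $J$ and $\tilde{J}$ separately, so ``computing $\tilde{J}\tilde{\Lambda}(b)$ and $J\Lambda(a)$ and comparing them'' is not directly available. To extract $\tilde{J}=L(u)J$ from $L(u)T\subseteq\tilde{T}$ one must (i) upgrade the inclusion to an equality of closed operators and (ii) appeal to uniqueness of the polar decomposition, which simultaneously forces $\tilde{\nabla}=\nabla$. The paper supplies the missing mechanism via Lemma~\ref{lem:Omega_scal_grp_inv}: since $\Psi$ lives on the classical group $\hat{\Gamma}$, whose scaling group is trivial, $\Omega=(L\tensor L)(\Psi)$ is invariant under $\hat{\tau}_{t}\tensor\hat{\tau}_{t}$; this makes the $1$-cocycle $v_{t}=\tilde{\nabla}^{it}\nabla^{-it}$ a one-parameter group, so $\tilde{\nabla}$ and $\nabla$ strongly commute, admit a common core, and the inclusion becomes an equality. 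Your proposal names the operators $v_{t}$ but offers no argument for why the ``modular correction collapses''; without the strong commutation (or some substitute) the proof stalls at an inclusion of unbounded antilinear operators and $X=L(u)$ does not follow. The consistency check via $JXJ=X^{*}$ at the end is correct but, as you note, only a necessary condition.
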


\begin{proof}
Let $U:=L(u)$. We claim that for every $\omega\in\LoneS(\hat{\G})$
we have 
\[
((\omega\tensor\i)(\hat{W}\Omega^{*}))^{*}=((U\omega^{\sharp})\tensor\i)(\hat{W}\Omega^{*}).
\]
Assuming that the claim is proved, denote by $T$ and $\tilde{T}$
the closures of the conjugate-linear operators $\Lambda(x)\mapsto\Lambda(x^{*})$,
$x\in\mathcal{N_{\varphi}\cap N_{\varphi}^{*}}$, and $\tilde{\Lambda}(y)\mapsto\tilde{\Lambda}(y^{*})$,
$y\in\mathcal{N_{\tilde{\theta}}\cap N_{\tilde{\theta}}^{*}}$ (using
the notation of the previous section), both on $\Ltwo(\G)$. Let $\hat{\mathcal{I}}_{\sharp}:=\{\omega\in\LoneS(\hat{\G})\cap\hat{\mathcal{I}}:\omega^{\sharp}\in\hat{\mathcal{I}}\}$.
If $\omega\in\hat{\mathcal{I}}_{\sharp}$, then by the von Neumann
algebraic and dual analogues of \cite[Result~8.6 and Proposition~8.13]{Kustermans_Vaes__LCQG_C_star}
also $U\omega^{\sharp}\in\hat{\mathcal{I}}$ and $\Lambda\big(((U\omega^{\sharp})\tensor\i)(\hat{W})\big)=U\Lambda((\omega^{\sharp}\tensor\i)(\hat{W}))$
as $U\in\Linfty(\hat{\G})$, and we get from (\ref{eq:Lambda_tilde})
and the claim above that $((\omega\tensor\i)(\hat{W}\Omega^{*}))^{*}\in\mathcal{N}_{\tilde{\theta}}$
and
\[
\tilde{T}\tilde{\Lambda}((\omega\tensor\i)(\hat{W}\Omega^{*}))=\tilde{\Lambda}\big(((\omega\tensor\i)(\hat{W}\Omega^{*}))^{*}\big)=\tilde{\Lambda}\big(((U\omega^{\sharp})\tensor\i)(\hat{W}\Omega^{*})\big),
\]
so that 
\[
\begin{split}\tilde{T}\Lambda((\omega\tensor\i)(\hat{W})) & =\Lambda\big(((U\omega^{\sharp})\tensor\i)(\hat{W})\big)=U\Lambda((\omega^{\sharp}\tensor\i)(\hat{W}))\\
 & =U\Lambda\big(((\omega\tensor\i)(\hat{W}))^{*}\big)=UT\Lambda((\omega\tensor\i)(\hat{W})).
\end{split}
\]
The set $\{\Lambda((\omega\tensor\i)(\hat{W})):\omega\in\hat{\mathcal{I}}_{\sharp}\}$
is a core for $T$ by \cite[Lemma~2.7]{Kustermans_Vaes__LCQG_von_Neumann},
hence $UT\subseteq\tilde{T}$, i.e.~$UJ\nabla^{1/2}\subseteq\tilde{J}\tilde{\nabla}^{1/2}$.
Since $\Omega:=(L\tensor L)(\Psi)$ and the (quantum) group $\hat{\Gamma}$
has trivial scaling group, $\Omega$ is invariant under $\left(\hat{\tau}_{t}\tensor\hat{\tau}_{t}\right)_{t\in\R}$.
By Lemma~\ref{lem:Omega_scal_grp_inv}, the positive selfadjoint
operators $\tilde{\nabla},\nabla$ strongly commute, and hence they
have a common core, and in particular a core for $\tilde{\nabla}^{1/2}$
contained in the domain of $\nabla^{1/2}$. By the closedness of $UJ\nabla^{1/2}$
this entails the equality $UJ\nabla^{1/2}=\tilde{J}\tilde{\nabla}^{1/2}$.
The uniqueness of the polar decomposition yields that ($\tilde{\nabla}=\nabla$
and) $UJ=\tilde{J}$, as desired.

We finish by proving the claim. Let $\omega\in\LoneS(\hat{\G})$.
For every $\gamma_{1},\gamma_{2}\in\Gamma$ we have 
\[
\begin{split}\big((\omega\tensor\id)(\hat{W}(L_{\gamma_{1}}\tensor L_{\gamma_{2}})^{*})\big)^{*} & =L_{\gamma_{2}}\cdot\big((L_{-\gamma_{1}}\omega\tensor\id)(\hat{W})\big)^{*}\\
 & \overset{(\star)}{=}L_{\gamma_{2}}\cdot(L_{-\gamma_{1}}\omega^{\sharp}\tensor\id)(\hat{W})\\
 & \overset{(\star\star)}{=}(\omega^{\sharp}\tensor\id)(\hat{W}(L_{\gamma_{2}-\gamma_{1}}\tensor L_{\gamma_{2}}))
\end{split}
\]
(in $(\star)$ we used that $(\hat{\Sant}(L_{-\gamma_{1}}))^{*}=L_{\gamma_{1}}^{*}=L_{-\gamma_{1}}$
and thus $L_{-\gamma_{1}}\omega\in\LoneS(\hat{\G})$ and ${(L_{-\gamma_{1}}\omega)}^{\sharp}=L_{-\gamma_{1}}\omega^{\sharp}$,
and in $(\star\star)$ we used (\ref{eq:L_gamma_group_like})), that
is, 
\begin{equation}
\big((\omega\tensor\id)(\hat{W}\cdot(L\tensor L)(\gamma_{1}\tensor\gamma_{2})^{*})\big)^{*}=(\omega^{\sharp}\tensor\id)\big(\hat{W}\cdot(L\tensor L)((\gamma_{2}-\gamma_{1})\tensor\gamma_{2})\big).\label{eq:X_equals_U__1}
\end{equation}
Write $\Theta$ for the automorphism of $\Linfty(\hat{\Gamma}\times\hat{\Gamma})$
that maps $F\in\Linfty(\hat{\Gamma}\times\hat{\Gamma})$ to $(\hat{\gamma}_{1},\hat{\gamma}_{2})\mapsto F(-\hat{\gamma}_{1},\hat{\gamma}_{1}+\hat{\gamma}_{2})$.
For every $\gamma_{1},\gamma_{2}\in\Gamma$ we have $\Theta(\gamma_{1}\tensor\gamma_{2})=(\gamma_{2}-\gamma_{1})\tensor\gamma_{2}$.
By linearity and continuity we infer from (\ref{eq:X_equals_U__1})
that 
\[
\big((\omega\tensor\id)(\hat{W}\cdot(L\tensor L)(F)^{*})\big)^{*}=(\omega^{\sharp}\tensor\id)\big(\hat{W}\cdot(L\tensor L)(\Theta(F))\big)\qquad(\forall_{F\in\Linfty(\hat{\Gamma}\times\hat{\Gamma})}).
\]
Taking in particular $F:=\Psi$ and applying the equality (\ref{eq:normal_2_cocycle_U})
which in the language of this proposition reads $\Theta(\Psi)=\overline{\Psi}\cdot(u\tensor\one)$,
we obtain 
\[
\big((\omega\tensor\id)(\hat{W}\Omega^{*})\big)^{*}=(\omega^{\sharp}\tensor\id)\big(\hat{W}\Omega^{*}(U\tensor\one)\big)=((U\omega^{\sharp})\tensor\id)\big(\hat{W}\Omega^{*}\big),
\]
as claimed.
\end{proof}
Proposition~\ref{prop:X_equals_U} yields a more transparent formula
for the left regular representation $\hat{W}_{\Psi}:=\hat{W}_{\Omega}$
(acting on $\Ltwo(\G)\tensor\Ltwo(\G)$) of the locally compact quantum
group $\hat{\G}_{\Psi}:=\hat{\G}_{\Omega}$. Define functions $\Upsilon,\mathring{\Psi}:\hat{\Gamma}\times\hat{\Gamma}\to\mathbb{T}$
by 
\begin{equation}
\begin{aligned}\Upsilon(\hat{\gamma}_{1},\hat{\gamma}_{2}) & :=\overline{\Psi(-\hat{\gamma}_{1},\hat{\gamma}_{1}+\hat{\gamma}_{2})},\\
\mathring{\Psi}(\hat{\gamma}_{1},\hat{\gamma}_{2}) & :=\Psi(\hat{\gamma}_{1},-\hat{\gamma}_{1}-\hat{\gamma}_{2}),
\end{aligned}
\quad\hat{\gamma}_{1},\hat{\gamma}_{2}\in\hat{\Gamma}.\label{eq:Upsilon_Psi_ring}
\end{equation}
By (\ref{eq:normal_2_cocycle_U}) we have $\Upsilon=(u^{*}\tensor\one)\Psi$
with $u\in\Linfty(\hat{\Gamma})$ defined in (\ref{eq:u}), hence
$(\Rant_{\hat{\Gamma}}\tensor\Rant_{\hat{\Gamma}})((u^{*}\tensor\one)\Psi)^{*}=\mathring{\Psi}$.
As a result, 
\[
\begin{split}(J\tensor\hat{J})(L(u)^{*}\tensor\one)\Omega(J\tensor\hat{J}) & =(\one\tensor\hat{J}J)(J\tensor J)(L(u)^{*}\tensor\one)\Omega(J\tensor J)(\one\tensor J\hat{J})\\
 & =(\one\tensor\hat{J}J)\cdot(L\tensor L)\big((\Rant_{\hat{\Gamma}}\tensor\Rant_{\hat{\Gamma}})((u^{*}\tensor\one)\Psi)\big)^{*}\cdot(\one\tensor J\hat{J})\\
 & =(\one\tensor\hat{J}J)\cdot(L\tensor L)(\mathring{\Psi})\cdot(\one\tensor J\hat{J})=(L\tensor R)(\mathring{\Psi}).
\end{split}
\]
Combining (\ref{eq:W_hat_Omega_clearer}) with Proposition~\ref{prop:X_equals_U}
we get $\hat{W}_{\Psi}=(J\tensor\hat{J})(L(u)^{*}\tensor\one)\Omega(J\tensor\hat{J})\hat{W}\Omega^{*}$.
Thus ultimately
\begin{equation}
\hat{W}_{\Psi}=(L\tensor R)(\mathring{\Psi})\cdot\hat{W}\cdot(L\tensor L)(\overline{\Psi}).\label{eq:W_hat_Psi}
\end{equation}

\section{The Rieffel deformation\label{sec:Rieffel_deform}}

This section is devoted to the study of \cst-algebraic Rieffel deformations
in the context of locally compact quantum groups.

\subsection{The Rieffel deformation of \cst- and von Neumann algebras\label{subsec:Rieffel_deform_algebras}}

In this subsection we follow \cite[Sections~2,3]{Kasprzak__Rieffel_deform_crossed_prod},
which is based on \cite{Landstad__dual_thy_cov_sys}, see also \cite[Section~7.8]{Pedersen__book}.
Let $\Gamma\overset{\rho}{\curvearrowright}A$ be an action of a locally
compact abelian group $\Gamma$ on a \cst-algebra $A$ and let $\Psi$
be a continuous $2$-cocycle on the dual $\hat{\Gamma}$. We construct
a deformed \cst-subalgebra $A^{\Psi}$ of the multiplier algebra
of the crossed product $A\rtimes_{\rho}\Gamma$ as follows. 

Write $\iota:A\to\M(A\rtimes_{\rho}\Gamma)$ for the canonical embedding,
$\lambda=\left(\lambda_{\gamma}\right)_{\gamma\in\Gamma}$ for the
canonical representation of $\Gamma$ inside $A\rtimes_{\rho}\Gamma$,
and $\hat{\rho}$ for the dual action $\hat{\Gamma}\curvearrowright A\rtimes_{\rho}\Gamma$
characterised by $\hat{\rho}$ fixing $A$ pointwise and satisfying
\begin{equation}
\hat{\rho}_{\hat{\gamma}}(\lambda_{\gamma})=\left\langle \hat{\gamma},\gamma\right\rangle \lambda_{\gamma}\qquad(\forall_{\gamma\in\Gamma,\hat{\gamma}\in\hat{\Gamma}});\label{eq:dual_action}
\end{equation}
this again follows the convention of \cite{Pedersen__book,Kasprzak__Rieffel_deform_crossed_prod}
in contrast to the common convention \cite{Takesaki__book_vol_2,Fima_Vainerman__twist_Rieffel_deform}
of having $\hat{\rho}_{\hat{\gamma}}(\lambda_{\gamma})=\overline{\left\langle \hat{\gamma},\gamma\right\rangle }\lambda_{\gamma}$.
The map in $\Mor(\Cz(\hat{\Gamma})\cong\CStarR(\Gamma),A\rtimes_{\rho}\Gamma)$
associated with $\lambda$ (also denoted by $\lambda$) is injective.
From (\ref{eq:dual_action}) we get 
\begin{equation}
\hat{\rho}_{\hat{\gamma}}(\lambda(g))=\lambda(g(\cdot+\hat{\gamma}))\qquad(\forall_{g\in\Cz(\hat{\Gamma}),\hat{\gamma}\in\hat{\Gamma}}).\label{eq:dual_action_via_Fourier}
\end{equation}

For each $\hat{\gamma}\in\hat{\Gamma}$ we define a unitary element
$\Psi_{\hat{\gamma}}\in\Cb(\hat{\Gamma})\cong\M(\CStarR(\Gamma))$
by $\Psi_{\hat{\gamma}}(\cdot)=\Psi(\cdot,\hat{\gamma})$. Then we
obtain a strictly continuous family of unitaries $\left(U_{\hat{\gamma}}:=\lambda(\Psi_{\hat{\gamma}})\right)_{\hat{\gamma}\in\hat{\Gamma}}$
in $\M(A\rtimes_{\rho}\Gamma)$, which by (\ref{eq:dual_action_via_Fourier})
and (\ref{eq:func_2_cocycle}) satisfies
\[
U_{\hat{\gamma}_{1}+\hat{\gamma}_{2}}=\overline{\Psi(\hat{\gamma}_{1},\hat{\gamma}_{2})}U_{\hat{\gamma}_{1}}\hat{\rho}_{\hat{\gamma}_{1}}(U_{\hat{\gamma}_{2}})\qquad(\forall_{\hat{\gamma}_{1},\hat{\gamma}_{2}\in\hat{\Gamma}})
\]
(and the scalars belong to $\mathbb{T}$). So $\left(U_{\hat{\gamma}}\right)_{\hat{\gamma}\in\hat{\Gamma}}$
forms a weak $1$-cocycle, in the sense of the \cst-algebraic analogue
of \cite[p.~281]{Popa__some_rig_Bernoulli}, for the dual action $\hat{\Gamma}\overset{\hat{\rho}}{\curvearrowright}A\rtimes_{\rho}\Gamma$.
Since $\left(U_{\hat{\gamma}}\right)_{\hat{\gamma}\in\hat{\Gamma}}$
is commuting, ${(U_{\hat{\gamma}}^{*})}_{\hat{\gamma}\in\hat{\Gamma}}$
is also such a weak $1$-cocycle. Hence, the formula
\begin{equation}
\hat{\rho}_{\hat{\gamma}}^{\Psi}:=\Ad(U_{\hat{\gamma}}^{*})\circ\hat{\rho}_{\hat{\gamma}}\qquad(\hat{\gamma}\in\hat{\Gamma})\label{eq:deformed_dual_action}
\end{equation}
defines a new (`\emph{deformed dual}') action $\hat{\Gamma}\overset{\hat{\rho}^{\Psi}}{\curvearrowright}A\rtimes_{\rho}\Gamma$,
which acts on $\left(\lambda_{\gamma}\right)_{\gamma\in\Gamma}$ like
$\hat{\rho}$ does in (\ref{eq:dual_action}). 

The \emph{Landstad algebra} $A^{\Psi}$ is defined as the set of all
these $x\in\M(A\rtimes_{\rho}\Gamma)$ for which:
\begin{elabeling}{00.00.0000}
\item [{(L1)}] $x$ is a fixed point of the action $\hat{\rho}^{\Psi}$; 
\item [{(L2)}] the map $\Gamma\ni\gamma\mapsto\lambda_{\gamma}x\lambda_{\gamma}^{*}$
is norm continuous; 
\item [{(L3)}] and for all $f,g\in\Cz(\hat{\Gamma})$ we have $\lambda(f)x\lambda(g)\in A\rtimes_{\rho}\Gamma$. 
\end{elabeling}
Then $A^{\Psi}$ is a \cst-subalgebra of $\M(A\rtimes_{\rho}\Gamma)$,
and by \emph{Landstad's theorem} (\cite{Landstad__dual_thy_cov_sys},
\cite[Theorem~7.8.8]{Pedersen__book}) the formula 
\begin{equation}
\rho_{\gamma}^{\Psi}(x):=\lambda_{\gamma}x\lambda_{\gamma}^{*}\qquad(\gamma\in\Gamma,x\in A^{\Psi})\label{eq:deformed_action}
\end{equation}
defines an (`deformed') action $\Gamma\overset{\rho^{\Psi}}{\curvearrowright}A^{\Psi}$
and there is an isomorphism $A\rtimes_{\rho}\Gamma\cong A^{\Psi}\rtimes_{\rho^{\Psi}}\Gamma$.
This isomorphism is natural: after extending to the multiplier algebras,
it maps $A^{\Psi}\subseteq\M(A\rtimes_{\rho}\Gamma)$ given by (L1)--(L3)
above to its canonical copy in $\M(A^{\Psi}\rtimes_{\rho^{\Psi}}\Gamma)$
and the canonical copy of $\Gamma$ in $\M(A\rtimes_{\rho}\Gamma)$
to the corresponding canonical copy in $\M(A^{\Psi}\rtimes_{\rho^{\Psi}}\Gamma)$.
In particular, the inclusion $A^{\Psi}\subseteq\M(A\rtimes_{\rho}\Gamma)$
is non-degenerate.

A very similar deformation procedure is available in the von Neumann
algebraic setting: in this case, the Landstad algebra is just the
fixed-point algebra under the deformed dual action.
\begin{rem}
Landstad's work \cite{Landstad__dual_thy_cov_sys} covers actions
of non-abelian locally compact groups. It was extended to actions
of locally compact quantum groups on von Neumann algebras \cite[Proposition~1.22]{Vaes_Vainerman__extensions_LCQGs}
and on \cst-algebras \cite[Theorem~6.7]{Vaes__new_appr_induct_impriv}.
\end{rem}

\subsection{Pictures of the Rieffel deformation of locally compact quantum groups\label{subsec:Rieffel_deform_LCQGs}}

We continue to use Standing~Hypothesis~\ref{stand_hyp} and the
notation of Subsection~\ref{subsec:cocyc_twist_lcag}, involving
a locally compact quantum group $\G$, a locally compact abelian group
$\Gamma$ that is a closed quantum subgroup of $\G$, and a normalised
continuous $2$-cocycle $\Psi$ on $\hat{\Gamma}$. We again write
$\Omega:=(L\tensor L)(\Psi)$, $\hat{\G}_{\Psi}:=\hat{\G}_{\Omega}$,
and $\GPsi:=\widehat{\hat{\G}_{\Psi}}$.

The \emph{Rieffel deformation} of locally compact (quantum) groups
was introduced and studied in \cite{Kasprzak__Rieffel_deform_crossed_prod}
and then in \cite{Fima_Vainerman__twist_Rieffel_deform} following
the works of \cite{Rieffel_deform,Enock_Vainerman__deform_Kac_alg_abel_grp}.
It is a new locally compact quantum group built from $\G,\Gamma,\Psi$
as above. The two papers \cite{Kasprzak__Rieffel_deform_crossed_prod,Fima_Vainerman__twist_Rieffel_deform}
provide essentially the same construction, but they work under different
assumptions (both are a little less general than ours) and under different
conventions, as follows:
\begin{itemize}
\item Kasprzak \cite{Kasprzak__Rieffel_deform_crossed_prod} assumes that
$\G$ is a locally compact \emph{group} $G$. He uses the \emph{right}
Haar measure/weight and regular representation, and works in the \cst\emph{-algebraic}
setting. In Subsection~4.3 therein he makes an additional assumption
about the modular function of $G$.
\item Fima and Vainerman \cite{Fima_Vainerman__twist_Rieffel_deform} ask
that $\Psi$ be a \emph{bicharacter} (instead of a general $2$-cocycle)
and that $\Gamma$ satisfy a condition that they call \emph{stability}.
They use the \emph{left} Haar weight and regular representation, and
work in the \emph{von Neumann algebraic} setting.
\end{itemize}
In what follows we briefly describe the Rieffel deformation construction
of \cite{Kasprzak__Rieffel_deform_crossed_prod,Fima_Vainerman__twist_Rieffel_deform}
and explain why it `equals' our $\GPsi$. While the purpose of the
extra assumptions in \cite{Kasprzak__Rieffel_deform_crossed_prod,Fima_Vainerman__twist_Rieffel_deform}
was to prove that the Rieffel deformation is indeed a locally compact
quantum group, we get this `for free' from the comparison to the
locally compact quantum group $\GPsi$ constructed using the work
of De~Commer as explained in the previous chapter. We follow closely
the construction of Kasprzak \cite{Kasprzak__Rieffel_deform_crossed_prod}
as we require the \cst-algebraic setting in some stages of our work.
Nevertheless, we continue to use the \emph{left} Haar weight and regular
representation to make the text in line with \cite{DeCommer__Galois_obj_cocycle_twist_LCQG}.
One should note that the latter paper appeared later than both \cite{Kasprzak__Rieffel_deform_crossed_prod,Fima_Vainerman__twist_Rieffel_deform}.

Recall from Subsection~\ref{subsec:cocyc_twist_lcag} the embeddings
$L:\Linfty(\hat{\Gamma})\to\Linfty(\hat{\G})$ and $R:\Linfty(\hat{\Gamma})\to\Linfty(\hat{\G}')$
pertaining to $\Gamma\le\G$ and the associated representations $\left(L_{\gamma}\right)_{\gamma\in\Gamma}$
and $\left(R_{\gamma}\right)_{\gamma\in\Gamma}$ of $\Gamma$. Since
$L_{\gamma},R_{\gamma}$ are group-like unitaries of $\hat{\G},\hat{\G}'$,
respectively, the automorphisms $\Ad(L_{\gamma}),\Ad(R_{\gamma})$
of $B(\Ltwo(\G))$ map both $\Linfty(\G)$ and $\Cz(\G)$ (acting
on $\Ltwo(\G)$) onto themselves; this is easily seen from (\ref{eq:L_gamma_group_like})
as explained in \cite[proof of Theorem~3.7]{KalantarNeufang_groups},
and is also a very particular case of \cite[Theorem~5.3 and Lemma~5.8]{Meyer_Roy_Woronowicz__hom_quant_grps}.
We call the resulting (commuting) actions $\Gamma\overset{\rho^{L},\rho^{R}}{\curvearrowright}\Cz(\G)$
given by $\gamma\mapsto\Ad(L_{\gamma})|_{\Cz(\G)},\Ad(R_{\gamma})|_{\Cz(\G)}\in\Aut(\Cz(\G))$
the left and right, respectively, translation actions. Note that the
left, resp.~right, quantum homomorphism associated with $\Gamma$
being a closed quantum subgroup of $\G$ is just the left, resp.~right,
coaction associated with the (left) action $\rho^{L}$, resp.~$\rho^{R}$;
these coactions are given by $\Cz(\G)\to\M(\Cz(\Gamma)\tensormin\Cz(\G))$,
$x\mapsto\left(\gamma\to\rho_{-\gamma}^{L}(x)\right)$, resp.~$\Cz(\G)\to\M(\Cz(\G)\tensormin\Cz(\Gamma))$,
$x\mapsto\left(\gamma\to\rho_{\gamma}^{R}(x)\right)$ \cite{Meyer_Roy_Woronowicz__hom_quant_grps,Daws_Kasprzak_Skalski_Soltan__closed_q_subgroups_LCQGs}
(see also \cite[pp.~2557\textendash 2558]{Brannan_Chirvasitu_Viselter__act_quo_lat}).
We are particularly interested here in the \emph{left-right translation
action} 
\[
\Gamma^{2}\overset{\rho}{\curvearrowright}\Cz(\G)\text{ given by }(\gamma_{1},\gamma_{2})\mapsto\rho_{\gamma_{1}}^{L}\circ\rho_{\gamma_{2}}^{R}=\Ad(L_{\gamma_{1}}R_{\gamma_{2}})|_{\Cz(\G)}.
\]
Define a $2$-cocycle $\dot{\Psi}$ on $\hat{\Gamma}$ by 
\[
\dot{\Psi}(\hat{\gamma}_{1},\hat{\gamma}_{2}):=\Psi(-\hat{\gamma}_{1},-\hat{\gamma}_{2}),\quad\hat{\gamma}_{1},\hat{\gamma}_{2}\in\hat{\Gamma}.
\]
We form the $2$-cocycle $\overline{\Psi}\boxtimes\dot{\Psi}$ on
$\hat{\Gamma}^{2}$ as in (\ref{eq:cocycle_tensor_prod}). So 
\[
(\overline{\Psi}\boxtimes\dot{\Psi})\big((\hat{\gamma}_{1},\hat{\gamma}_{1}'),(\hat{\gamma}_{2},\hat{\gamma}_{2}')\big):=\overline{\Psi}(\hat{\gamma}_{1},\hat{\gamma}_{2})\dot{\Psi}(\hat{\gamma}_{1}',\hat{\gamma}_{2}')=\overline{\Psi(\hat{\gamma}_{1},\hat{\gamma}_{2})}\Psi(-\hat{\gamma}_{1}',-\hat{\gamma}_{2}').
\]
We now put the \cst-algebra $\Cz(\G)$, the left-right translation
action $\Gamma^{2}\overset{\rho}{\curvearrowright}\Cz(\G)$, and the
continuous $2$-cocycle $\overline{\Psi}\boxtimes\dot{\Psi}$ on $\widehat{\Gamma^{2}}\cong\hat{\Gamma}^{2}$
as data into the Rieffel deformation of \cst-algebras construction
described in the previous subsection. It yields the `deformed dual'
action $\hat{\Gamma}^{2}\overset{\hat{\rho}^{\overline{\Psi}\boxtimes\dot{\Psi}}}{\curvearrowright}\Cz(\G)\rtimes_{\rho}\Gamma^{2}$
and the Landstad algebra $\Cz(\G)^{\overline{\Psi}\boxtimes\dot{\Psi}}\subseteq\M(\Cz(\G)\rtimes_{\rho}\Gamma^{2})$.

Staying in line with the previous subsection, we write $\iota:\Cz(\G)\to\M(\Cz(\G)\rtimes_{\rho}\Gamma^{2})$
for the canonical embedding and $\lambda$ for the canonical representation
of $\Gamma^{2}$ inside $\Cz(\G)\rtimes_{\rho}\Gamma^{2}$. Let $\lambda^{L},\lambda^{R}$
be the representations of $\Gamma$ inside $\Cz(\G)\rtimes_{\rho}\Gamma^{2}$
given by 
\[
\lambda_{\gamma}^{L}:=\lambda_{\gamma,0},\quad\lambda_{\gamma}^{R}:=\lambda_{0,\gamma}\qquad(\gamma\in\Gamma).
\]
For all $g,g_{1},g_{2}\in\Cz(\hat{\Gamma})$ we have 
\begin{gather}
\lambda(g_{1}\tensor g_{2})=\lambda^{L}(g_{1})\lambda^{R}(g_{2})\text{ (viewing }g_{1}\tensor g_{2}\text{ in }\Cz(\hat{\Gamma}^{2})\text{)},\nonumber \\
(\i\tensor\iota)(\hat{W}^{*})\cdot(\one\tensor\lambda^{L}(g))\cdot(\i\tensor\iota)(\hat{W})=(L\tensor\lambda^{L})\big(\Delta_{\hat{\Gamma}}(g)\big)\text{ by }\eqref{eq:L_gamma_group_like},\label{eq:lambda_L_lambda_R}\\
(\i\tensor\iota)(\hat{W})\cdot(\one\tensor\lambda^{R}(g))\cdot(\i\tensor\iota)(\hat{W}^{*})=(L\tensor\lambda^{R})\big(\Delta_{\hat{\Gamma}}(g)\big)\text{ by }\eqref{eq:R_gamma_group_like},\nonumber 
\end{gather}
where the two lower formulas follow by verifying them for $g$ in
the strictly total subset $\{\gamma:\hat{\Gamma}\to\C\mid\gamma\in\Gamma\}\subseteq\Cb(\hat{\Gamma})$
using the fact that the representations $\lambda^{L},\lambda^{R}$
implement the actions $\rho^{L},\rho^{R}$ inside $\Cz(\G)\rtimes_{\rho}\Gamma^{2}$.

Recall from (\ref{eq:Upsilon_Psi_ring}) the function $\mathring{\Psi}$
and from (\ref{eq:W_hat_Psi}) the formula $\hat{W}_{\Psi}=(L\tensor R)(\mathring{\Psi})\cdot\hat{W}\cdot(L\tensor L)(\overline{\Psi})$
for the left regular representation $\hat{W}_{\Psi}:=\hat{W}_{\Omega}$
of $\hat{\G}_{\Psi}:=\hat{\G}_{\Omega}$. Consider the unitary 
\begin{equation}
\hat{\mathcal{W}}_{\Psi}:=(L\tensor\lambda^{R})(\mathring{\Psi})\cdot(\i\tensor\iota)(\hat{W})\cdot(L\tensor\lambda^{L})(\overline{\Psi})\in\M\big(\Cz(\hat{\G})\tensormin(\Cz(\G)\rtimes_{\rho}\Gamma^{2})\big).\label{eq:cal_W_hat_Psi}
\end{equation}
Just as in \cite[Proposition~4.3]{Kasprzak__Rieffel_deform_crossed_prod}
one verifies that $\hat{\mathcal{W}}_{\Psi}$ is invariant under the
action $\i\tensor\hat{\rho}^{\overline{\Psi}\boxtimes\dot{\Psi}}$
of $\hat{\Gamma}^{2}$ on $\Cz(\hat{\G})\tensormin(\Cz(\G)\rtimes_{\rho}\Gamma^{2})$.
Indeed, similarly to (\ref{eq:dual_action_via_Fourier}), for all
$\hat{\gamma}_{1},\hat{\gamma}_{2}\in\hat{\Gamma}$ we have 
\[
\hat{\rho}_{\hat{\gamma}_{1},\hat{\gamma}_{2}}(\lambda^{L}(g))=\lambda^{L}(g(\cdot+\hat{\gamma}_{1}))\text{ and }\hat{\rho}_{\hat{\gamma}_{1},\hat{\gamma}_{2}}(\lambda^{R}(g))=\lambda^{R}(g(\cdot+\hat{\gamma}_{2}))\qquad(\forall_{g\in\Cz(\hat{\Gamma})}),
\]
hence 
\[
(\i\tensor\hat{\rho}_{\hat{\gamma}_{1},\hat{\gamma}_{2}})(\hat{\mathcal{W}}_{\Psi})=(L\tensor\lambda^{R})(\mathring{\Psi}(\cdot,\cdot+\hat{\gamma}_{2}))\cdot(\i\tensor\iota)(\hat{W})\cdot(L\tensor\lambda^{L})(\overline{\Psi}(\cdot,\cdot+\hat{\gamma}_{1})),
\]
and by the definition (\ref{eq:deformed_dual_action}) of the `deformed
dual' action and the equations (\ref{eq:lambda_L_lambda_R}) we deduce
that $(\i\tensor\hat{\rho}_{\hat{\gamma}_{1},\hat{\gamma}_{2}}^{\overline{\Psi}\boxtimes\dot{\Psi}})(\hat{\mathcal{W}}_{\Psi})$
equals
\[
(L\tensor\lambda^{R})\big(\mathring{\Psi}(\cdot,\cdot+\hat{\gamma}_{2})\Delta_{\hat{\Gamma}}(\dot{\Psi}_{\hat{\gamma}_{2}})(\one\tensor\overline{\dot{\Psi}}_{\hat{\gamma}_{2}})\big)\cdot(\i\tensor\iota)(\hat{W})\cdot(L\tensor\lambda^{L})\big((\one\tensor\overline{\Psi}_{\hat{\gamma}_{1}})\Delta_{\hat{\Gamma}}(\Psi_{\hat{\gamma}_{1}})\overline{\Psi}(\cdot,\cdot+\hat{\gamma}_{1})\big),
\]
which equals $\hat{\mathcal{W}}_{\Psi}$ by virtue of the $2$-cocycle
condition.

Next, precisely as in \cite[Theorem~4.4]{Kasprzak__Rieffel_deform_crossed_prod},
we conclude that $\{(\omega\tensor\i)(\hat{\mathcal{W}}_{\Psi}):\omega\in\Lone(\hat{\G})\}$
is a (norm-) dense subspace of the Landstad algebra $\Cz(\G)^{\overline{\Psi}\boxtimes\dot{\Psi}}$.
Indeed, the Landstad condition (L1) has just been established, and
(L2) follows because $(\Ad(\one\tensor\lambda_{\gamma_{1},\gamma_{2}}))\big((\i\tensor\iota)(\hat{W})\big)=(L_{-\gamma_{2}}\tensor\one)\cdot(\i\tensor\iota)(\hat{W})\cdot(L_{\gamma_{1}}\tensor\one)$
for all $\gamma_{1},\gamma_{2}\in\Gamma$ by (\ref{eq:L_gamma_group_like})
and (\ref{eq:R_gamma_group_like}). One then shows like in the proof
of \cite[Theorem~4.4]{Kasprzak__Rieffel_deform_crossed_prod} that
\begin{multline*}
\clinspan\{\lambda(g_{1})\cdot(\omega\tensor\i)(\hat{\mathcal{W}}_{\Psi})\cdot\lambda(g_{2}):\omega\in\Lone(\hat{\G}),g_{1},g_{2}\in\Cz(\hat{\Gamma}^{2})\}\\
=\clinspan\{\lambda(g_{1})\cdot(\omega\tensor\iota)(\hat{W})\cdot\lambda(g_{2}):\omega\in\Lone(\hat{\G}),g_{1},g_{2}\in\Cz(\hat{\Gamma}^{2})\},
\end{multline*}
which equals $\Cz(\G)\rtimes_{\rho}\Gamma^{2}$. This proves both
(L3) and the desired density by \cite[Lemma~2.6]{Kasprzak__Rieffel_deform_crossed_prod}.

By definition $\rho$ is spatially implemented on $\Ltwo(\G)$, thus
there exists a representation of $\Cz(\G)\rtimes_{\rho}\Gamma^{2}$
on $\Ltwo(\G)$, which we denote by $\pi^{\mathrm{can}}$ as in \cite{Kasprzak__Rieffel_deform_crossed_prod},
that restricts to the natural representation of $\Cz(\G)$ on $\Ltwo(\G)$
and maps $\lambda_{\gamma_{1},\gamma_{2}}$ to $L_{\gamma_{1}}R_{\gamma_{2}}$
for $(\gamma_{1},\gamma_{2})\in\Gamma^{2}$. Since $\pi^{\mathrm{can}}$
is plainly faithful on $\Cz(\G)$, it is also faithful on $\Cz(\G)^{\overline{\Psi}\boxtimes\dot{\Psi}}$
by \cite[Theorem~3.6]{Kasprzak__Rieffel_deform_crossed_prod}. Notice
that $\pi^{\mathrm{can}}(\lambda^{L}(g))=L(g)$ and $\pi^{\mathrm{can}}(\lambda^{R}(g))=R(g)$
for $g\in\Cz(\hat{\Gamma})$. By the foregoing and the definitions
(\ref{eq:W_hat_Psi}) and (\ref{eq:cal_W_hat_Psi}) of $\hat{W}_{\Psi}$
and $\hat{\mathcal{W}}_{\Psi}$, we have the following statement,
demonstrating the bifold nature of the Rieffel deformation. Recall
that $\GPsi:=\widehat{\hat{\G}_{\Psi}}$.
\begin{thm}
\label{thm:C0_G_Psi_crossed_prod}The left regular representation
$\hat{W}_{\Psi}\in\mathcal{U}(\Ltwo(\G)\tensor\Ltwo(\G))$ of $\hat{\G}_{\Psi}$
and the unitary $\hat{\mathcal{W}}_{\Psi}\in\M\big(\Cz(\hat{\G})\tensormin(\Cz(\G)\rtimes_{\rho}\Gamma^{2})\big)$
satisfy 
\[
\overline{\{(\omega\tensor\i)(\hat{\mathcal{W}}_{\Psi}):\omega\in\Lone(\hat{\G})\}}^{\left\Vert \cdot\right\Vert }=\Cz(\G)^{\overline{\Psi}\boxtimes\dot{\Psi}}\text{ and }(\i\tensor\pi^{\mathrm{can}})(\hat{\mathcal{W}}_{\Psi})=\hat{W}_{\Psi}
\]
(in the right equation we view both sides as acting on $\Ltwo(\G)\tensor\Ltwo(\G)$).
In particular, $\pi^{\mathrm{can}}$ maps $\Cz(\G)^{\overline{\Psi}\boxtimes\dot{\Psi}}$
injectively onto $\Cz(\GPsi)\subseteq B(\Ltwo(\G))$.
\end{thm}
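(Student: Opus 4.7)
The plan is to observe that most of the work has already been done in the paragraphs preceding the statement, and to assemble the three assertions from those components, keeping track of how $\pi^{\mathrm{can}}$ intertwines the `abstract' picture in $\M(\Cz(\G)\rtimes_{\rho}\Gamma^{2})$ with the `spatial' picture on $\Ltwo(\G)$.

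For the first equality, I would verify in turn that every element of the form $(\omega\tensor\i)(\hat{\mathcal{W}}_\Psi)$, with $\omega\in\Lone(\hat{\G})$, satisfies Landstad's conditions (L1)--(L3) with respect to the deformed dual action $\hat{\rho}^{\overline{\Psi}\boxtimes\dot{\Psi}}$. Condition (L1) is precisely the $\hat{\Gamma}^{2}$-invariance computed just before the statement, via the $2$-cocycle identity together with \eqref{eq:lambda_L_lambda_R}. Condition (L2) reduces via \eqref{eq:L_gamma_group_like}--\eqref{eq:R_gamma_group_like} to the identity $(\Ad(\one\tensor\lambda_{\gamma_{1},\gamma_{2}}))\bigl((\i\tensor\iota)(\hat{W})\bigr)=(L_{-\gamma_{2}}\tensor\one)\cdot(\i\tensor\iota)(\hat{W})\cdot(L_{\gamma_{1}}\tensor\one)$, which is visibly norm-continuous in $(\gamma_{1},\gamma_{2})$. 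Condition (L3) and the asserted norm density follow in one stroke by proving that the closed linear span of $\lambda(g_{1})\cdot(\omega\tensor\i)(\hat{\mathcal{W}}_\Psi)\cdot\lambda(g_{2})$ (over $\omega,g_{1},g_{2}$) equals the corresponding span with $\hat{W}$ in place of $\hat{\mathcal{W}}_\Psi$ — this is immediate from \eqref{eq:cal_W_hat_Psi} since the cocycle factors $(L\tensor\lambda^{L})(\overline{\Psi})$ and $(L\tensor\lambda^{R})(\mathring{\Psi})$ are absorbed into the outer $\lambda$'s; the resulting span equals $\Cz(\G)\rtimes_{\rho}\Gamma^{2}$, and \cite[Lemma~2.6]{Kasprzak__Rieffel_deform_crossed_prod} concludes.

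For the second equality I would apply $\i\tensor\pi^{\mathrm{can}}$ directly to the definition \eqref{eq:cal_W_hat_Psi} of $\hat{\mathcal{W}}_\Psi$: since $\pi^{\mathrm{can}}(\lambda^{L}(g))=L(g)$ and $\pi^{\mathrm{can}}(\lambda^{R}(g))=R(g)$ for $g\in\Cz(\hat{\Gamma})$, and $\pi^{\mathrm{can}}$ is the identity on $\Cz(\G)$, each factor of \eqref{eq:cal_W_hat_Psi} is carried to the matching factor of \eqref{eq:W_hat_Psi}, giving $(\i\tensor\pi^{\mathrm{can}})(\hat{\mathcal{W}}_\Psi)=\hat{W}_\Psi$. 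The final clause follows by combining what has just been proved: $\pi^{\mathrm{can}}$ sends $(\omega\tensor\i)(\hat{\mathcal{W}}_\Psi)$ to $(\omega\tensor\i)(\hat{W}_\Psi)$ (using compatibility of slice maps with the non-degenerate representation $\pi^{\mathrm{can}}$, in the spirit of Observation~\ref{obs:slice_maps_nondeg_incl}), so by the first equality and the usual description of $\Cz(\GPsi)$ as the norm closure in $B(\Ltwo(\G))$ of $\{(\omega\tensor\i)(\hat{W}_\Psi):\omega\in\Lone(\hat{\G})\}$, $\pi^{\mathrm{can}}$ maps $\Cz(\G)^{\overline{\Psi}\boxtimes\dot{\Psi}}$ onto $\Cz(\GPsi)$; injectivity then comes from \cite[Theorem~3.6]{Kasprzak__Rieffel_deform_crossed_prod}, since $\pi^{\mathrm{can}}$ is faithful on $\Cz(\G)$.

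The main technical obstacle — and the reason the theorem packaging is worth stating separately rather than left implicit — is the careful handling of the two multiplier-algebra embeddings and the strict topologies they induce, warned about in Observation~\ref{obs:two_strict_top}. Concretely, the slice $(\omega\tensor\i)(\hat{\mathcal{W}}_\Psi)$ a priori lives in $\M(\Cz(\G)\rtimes_{\rho}\Gamma^{2})$ computed with respect to one strict topology, while it must simultaneously be treated as an element of the $*$-subalgebra $\Cz(\G)^{\overline{\Psi}\boxtimes\dot{\Psi}}$, whose multiplier algebra is tacitly embedded into the crossed product's; that $\pi^{\mathrm{can}}$ extends consistently in both pictures, and that slice maps commute with $\pi^{\mathrm{can}}$ in the required sense, is exactly what Observation~\ref{obs:slice_maps_nondeg_incl} is designed to supply. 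Once these compatibilities are in place the theorem follows by reorganising the preceding computations.
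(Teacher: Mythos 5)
Your proposal is correct and follows essentially the same route as the paper, which states the theorem as a summary of the computations in the paragraphs immediately preceding it: verification of the Landstad conditions (L1)--(L3), the span identity via the method of \cite[Theorem~4.4]{Kasprzak__Rieffel_deform_crossed_prod} together with \cite[Lemma~2.6]{Kasprzak__Rieffel_deform_crossed_prod}, faithfulness of $\pi^{\mathrm{can}}$ on the Landstad algebra via \cite[Theorem~3.6]{Kasprzak__Rieffel_deform_crossed_prod}, and factor-by-factor application of $\pi^{\mathrm{can}}$ to the definition of $\hat{\mathcal{W}}_{\Psi}$. The only loose point is your claim that the cocycle factors are `absorbed into the outer $\lambda$'s': their first legs lie in $\M(\Cz(\hat{\G}))$ via $L$ and must instead be absorbed into the functional $\omega$, which is precisely the extra approximation step supplied by the cited proof of \cite[Theorem~4.4]{Kasprzak__Rieffel_deform_crossed_prod}.
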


Denote by $\pi^{\Psi}$ the isomorphism $\pi^{\mathrm{can}}|_{\Cz(\G)^{\overline{\Psi}\boxtimes\dot{\Psi}}}:\Cz(\G)^{\overline{\Psi}\boxtimes\dot{\Psi}}\to\Cz(\GPsi)$. 
\begin{rem*}
Note the following subtlety. We have the unitaries $\hat{W}_{\Psi}\in\M(\Cz(\hat{\G}_{\Psi})\tensormin\Cz(\GPsi=\widehat{\hat{\G}_{\Psi}}))$
and $\hat{\mathcal{W}}_{\Psi}\in\M\big(\Cz(\hat{\G})\tensormin(\Cz(\G)\rtimes_{\rho}\Gamma^{2})\big)$.
Theorem~\ref{thm:C0_G_Psi_crossed_prod} relates their slices by
$\pi^{\Psi}((\omega\tensor\i)(\hat{\mathcal{W}}_{\Psi}))=(\omega\tensor\i)(\hat{W}_{\Psi})$
for all $\omega\in\Lone(\hat{\G})$, a formula that we use several
times below. Nevertheless, we still cannot replace $(\i\tensor\pi^{\mathrm{can}})(\hat{\mathcal{W}}_{\Psi})=\hat{W}_{\Psi}$
by the naive equality $(\i\tensor\pi^{\Psi})(\hat{\mathcal{W}}_{\Psi})=\hat{W}_{\Psi}$
because the latter does not make sense as we do not know whether $\hat{\mathcal{W}}_{\Psi}\in\M\big(\Cz(\hat{\G})\tensormin\Cz(\G)^{\overline{\Psi}\boxtimes\dot{\Psi}}\big)$.
Also, note that in general $\Cz(\hat{\G}_{\Psi})$ may be different
from $\Cz(\hat{\G})$ \cite{DeCommer__cocycle_twist_CQG}. In the
unusual situation where $\Cz(\hat{\G}_{\Psi})$ does equal $\Cz(\hat{\G})$,
we could indeed deduce that $\hat{\mathcal{W}}_{\Psi}$ equals $(\i\tensor{(\pi^{\Psi})}^{-1})(\hat{W}_{\Psi})$,
which belongs to $\M\big(\Cz(\hat{\G})\tensormin\Cz(\G)^{\overline{\Psi}\boxtimes\dot{\Psi}}\big)$,
and that the `naive equality' holds.
\end{rem*}
We require the pull-back of the co-multiplication $\Delta_{\GPsi}$
of $\GPsi$ to $\Cz(\G)^{\overline{\Psi}\boxtimes\dot{\Psi}}$. The
morphism 
\[
(\iota\tensor\iota)\circ\Delta\in\Mor\big(\Cz(\G),(\Cz(\G)\rtimes_{\rho}\Gamma^{2})\tensormin(\Cz(\G)\rtimes_{\rho}\Gamma^{2})\big)
\]
together with the representation of $\Gamma^{2}$ inside $(\Cz(\G)\rtimes_{\rho}\Gamma^{2})\tensormin(\Cz(\G)\rtimes_{\rho}\Gamma^{2})$
given by $(\gamma_{1},\gamma_{2})\mapsto\lambda_{\gamma_{1}}^{L}\tensor\lambda_{\gamma_{2}}^{R}$,
$(\gamma_{1},\gamma_{2})\in\Gamma^{2}$, form a covariant representation
of $\Gamma^{2}\overset{\rho}{\curvearrowright}\Cz(\G)$ on account
of the intertwining relation 
\begin{equation}
\Delta\circ\rho_{\gamma_{1},\gamma_{2}}=(\rho_{\gamma_{1},0}\tensor\rho_{0,\gamma_{2}})\circ\Delta\qquad(\forall_{(\gamma_{1},\gamma_{2})\in\Gamma^{2}}),\label{eq:Delta_left_right_transl}
\end{equation}
which holds since 
\[
W^{*}\big(\one\tensor(\Ad(L_{\gamma_{1}}R_{\gamma_{2}}))(x)\big)W=W^{*}\big((\Ad(L_{\gamma_{1}}\tensor L_{\gamma_{1}}R_{\gamma_{2}}))(\one\tensor x)\big)W=(\Ad(L_{\gamma_{1}}\tensor R_{\gamma_{2}}))(W^{*}(\one\tensor x)W)
\]
for all $x\in B(\Ltwo(\G))$ by (\ref{eq:L_gamma_group_like}) (use
the flip) and the fact that $R_{\gamma}\in\Linfty(\hat{\G})'$. Consequently,
there exists (a unique) $\tilde{\Delta}\in\Mor\big(\Cz(\G)\rtimes_{\rho}\Gamma^{2},(\Cz(\G)\rtimes_{\rho}\Gamma^{2})\tensormin(\Cz(\G)\rtimes_{\rho}\Gamma^{2})\big)$
mapping $\iota(a)$ to $(\iota\tensor\iota)(\Delta(a))$ for $a\in\Cz(\G)$
and $\lambda_{\gamma_{1},\gamma_{2}}$ to $\lambda_{\gamma_{1}}^{L}\tensor\lambda_{\gamma_{2}}^{R}$
for $(\gamma_{1},\gamma_{2})\in\Gamma^{2}$. Finally, recall the function
$\Upsilon$ introduced in (\ref{eq:Upsilon_Psi_ring}) and consider
the unitary 
\begin{equation}
(\lambda^{R}\tensor\lambda^{L})(\Upsilon)\in\M\big((\Cz(\G)\rtimes_{\rho}\Gamma^{2})\tensormin(\Cz(\G)\rtimes_{\rho}\Gamma^{2})\big).\label{eq:R_L_Upsilon}
\end{equation}

\begin{thm}
\label{thm:Psi_comult_crossed_prod}The morphism 
\begin{equation}
\Ad((\lambda^{R}\tensor\lambda^{L})(\Upsilon))\circ\tilde{\Delta}\in\Mor\big(\Cz(\G)\rtimes_{\rho}\Gamma^{2},(\Cz(\G)\rtimes_{\rho}\Gamma^{2})\tensormin(\Cz(\G)\rtimes_{\rho}\Gamma^{2})\big)\label{eq:Psi_comult_crossed_prod}
\end{equation}
restricts to a morphism $\Delta^{\Psi}\in\Mor(\Cz(\G)^{\overline{\Psi}\boxtimes\dot{\Psi}},\Cz(\G)^{\overline{\Psi}\boxtimes\dot{\Psi}}\tensormin\Cz(\G)^{\overline{\Psi}\boxtimes\dot{\Psi}})$,
which the isomorphism $\pi^{\Psi}:\Cz(\G)^{\overline{\Psi}\boxtimes\dot{\Psi}}\to\Cz(\GPsi)$
intertwines with $\Delta_{\GPsi}$, i.e.~$(\pi^{\Psi}\tensor\pi^{\Psi})\circ\Delta^{\Psi}=\Delta_{\GPsi}\circ\pi^{\Psi}$. 
\end{thm}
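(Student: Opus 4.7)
The strategy rests on the density expression from Theorem~\ref{thm:C0_G_Psi_crossed_prod}: elements of the form $(\omega\tensor\i)(\hat{\mathcal{W}}_\Psi)$ with $\omega\in\Lone(\hat{\G})$ are norm-dense in $\Cz(\G)^{\overline{\Psi}\boxtimes\dot{\Psi}}$, and $\pi^{\mathrm{can}}$ sends such a slice to $(\omega\tensor\i)(\hat{W}_\Psi)\in\Cz(\GPsi)$. Writing $\Delta^\Psi_{\mathrm{can}}$ for the morphism on the right of (\ref{eq:Psi_comult_crossed_prod}) and invoking the pentagon $(\i\tensor\Delta_{\GPsi})(\hat{W}_\Psi)=(\hat{W}_\Psi)_{12}(\hat{W}_\Psi)_{13}$ for the multiplicative unitary of $\hat{\G}_\Psi$, the entire theorem --- both the restriction property and the intertwining --- reduces to the single operator identity
\[
(\i\tensor\pi^{\mathrm{can}}\tensor\pi^{\mathrm{can}})\bigl((\i\tensor\Delta^\Psi_{\mathrm{can}})(\hat{\mathcal{W}}_\Psi)\bigr)=(\hat{W}_\Psi)_{12}(\hat{W}_\Psi)_{13}
\]
in $B(\Ltwo(\G))^{\tensor 3}$. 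Slicing the first leg by $\omega$ then reproduces the required intertwining on a dense subset of $\Cz(\G)^{\overline{\Psi}\boxtimes\dot{\Psi}}$; the restriction claim $\Delta^\Psi_{\mathrm{can}}(\Cz(\G)^{\overline{\Psi}\boxtimes\dot{\Psi}})\subseteq\M(\Cz(\G)^{\overline{\Psi}\boxtimes\dot{\Psi}}\tensormin\Cz(\G)^{\overline{\Psi}\boxtimes\dot{\Psi}})$ then follows by pulling back through the faithful morphism $\pi^{\mathrm{can}}\tensor\pi^{\mathrm{can}}$, with appeal to Observations~\ref{obs:two_strict_top} and~\ref{obs:slice_maps_nondeg_incl} to justify the multiplier-level statements properly.

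To expand the left-hand side, the plan is to split $\hat{\mathcal{W}}_\Psi$ into its three factors as in (\ref{eq:cal_W_hat_Psi}) and apply $\i\tensor\tilde{\Delta}$ factor by factor. Since by construction $\tilde{\Delta}(\lambda^L_\gamma)=\lambda^L_\gamma\tensor\one$ and $\tilde{\Delta}(\lambda^R_\gamma)=\one\tensor\lambda^R_\gamma$, the two cocycle factors contribute $((L\tensor\lambda^R)(\mathring{\Psi}))_{13}$ and $((L\tensor\lambda^L)(\overline{\Psi}))_{12}$ respectively. For the middle factor, the covariance $\tilde{\Delta}\circ\iota=(\iota\tensor\iota)\circ\Delta$ applied to a slice $\iota((\omega\tensor\i)(\hat{W}))$ and combined with the standard pentagon $(\i\tensor\Delta)(\hat{W})=\hat{W}_{12}\hat{W}_{13}$ yields, strictly, $(\i\tensor\tilde{\Delta})((\i\tensor\iota)(\hat{W}))=((\i\tensor\iota)(\hat{W}))_{12}((\i\tensor\iota)(\hat{W}))_{13}$. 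Passing through $\i\tensor\pi^{\mathrm{can}}\tensor\pi^{\mathrm{can}}$ (which sends $\lambda^L\mapsto L$, $\lambda^R\mapsto R$, $\iota\mapsto\mathrm{id}_{\Cz(\G)}$) and inserting the conjugation by $\one\tensor(R\tensor L)(\Upsilon)$ turns the target equation into a purely spatial identity among $\hat{W}_{12}$, $\hat{W}_{13}$, and the various $(L\tensor L)(\overline{\Psi})$, $(L\tensor R)(\mathring{\Psi})$, and $(R\tensor L)(\Upsilon)$ factors, which should match $(\hat{W}_\Psi)_{12}(\hat{W}_\Psi)_{13}$ after expanding via (\ref{eq:W_hat_Psi}).

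The main obstacle is precisely this remaining spatial identity. The toolkit for it is threefold: (i) the commutation rules (\ref{eq:L_gamma_group_like}) and (\ref{eq:R_gamma_group_like}), which allow one to slide $L$- and $R$-valued factors past $\hat{W}_{12}$ and $\hat{W}_{13}$ at the cost of twists by $\hat{\Delta}_{\hat{\Gamma}}$; (ii) the mutual commutation of $\Linfty(\hat{\G})$ and $\Linfty(\hat{\G}')$, which lets $R$-valued and $L$-valued factors sitting at different legs be reordered freely; and (iii) the $2$-cocycle identity (\ref{eq:func_2_cocycle}) combined with the normalisation (\ref{eq:normal_2_cocycle_U}) and the definitions (\ref{eq:Upsilon_Psi_ring}) of $\Upsilon$ and $\mathring{\Psi}$, which reduce the residual scalar-valued identity on $\hat{\Gamma}^3$ to a single direct application of the cocycle equation. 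This is an amplification of the computation behind \cite[Theorem~4.4 and Section~4.2]{Kasprzak__Rieffel_deform_crossed_prod}: the presence of the $R$-valued factors, absent in the classical group case treated there, reflects our full generality and is precisely what the explicit formula (\ref{eq:W_hat_Psi}) for $\hat{W}_\Psi$ derived in the previous section accommodates. A further mild technical care is needed in the final lifting step, where one must ensure that the multiplier-algebra element produced on the left side genuinely lies in $\M(\Cz(\G)^{\overline{\Psi}\boxtimes\dot{\Psi}}\tensormin\Cz(\G)^{\overline{\Psi}\boxtimes\dot{\Psi}})$ and not merely in the larger multiplier algebra $\M(\Cz(\G)\rtimes_{\rho}\Gamma^{2}\tensormin\Cz(\G)\rtimes_{\rho}\Gamma^{2})$, which is exactly the type of subtlety flagged by Observation~\ref{obs:two_strict_top}.
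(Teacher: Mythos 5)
Your overall strategy matches the paper's in its computational core: expand $\hat{\mathcal{W}}_{\Psi}$ into its three factors, apply $\i\tensor\tilde{\Delta}$ factor by factor using $\tilde{\Delta}(\lambda_{\gamma}^{L})=\lambda_{\gamma}^{L}\tensor\one$, $\tilde{\Delta}(\lambda_{\gamma}^{R})=\one\tensor\lambda_{\gamma}^{R}$ and $\tilde{\Delta}\circ\iota=(\iota\tensor\iota)\circ\Delta$, conjugate by $\one\tensor(\lambda^{R}\tensor\lambda^{L})(\Upsilon)$, and reduce to a single application of the $2$-cocycle identity via (\ref{eq:lambda_L_lambda_R}). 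But your logical reduction has a genuine gap. You reduce the whole theorem to a \emph{spatial} identity in $B(\Ltwo(\G))^{\tensor3}$ obtained by applying $\i\tensor\pi^{\mathrm{can}}\tensor\pi^{\mathrm{can}}$, and propose to recover the restriction claim by ``pulling back through the faithful morphism $\pi^{\mathrm{can}}\tensor\pi^{\mathrm{can}}$''. The representation $\pi^{\mathrm{can}}$ is faithful on $\Cz(\G)$ and hence on the Landstad algebra $\Cz(\G)^{\overline{\Psi}\boxtimes\dot{\Psi}}$ by \cite[Theorem~3.6]{Kasprzak__Rieffel_deform_crossed_prod}, but it is \emph{not} faithful on the ambient crossed product $\Cz(\G)\rtimes_{\rho}\Gamma^{2}$ (already for $\G=\Gamma$ finite the crossed product has strictly larger dimension than $B(\Ltwo(\Gamma))$). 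Since $\Delta_{\mathrm{can}}^{\Psi}\big((\omega\tensor\i)(\hat{\mathcal{W}}_{\Psi})\big)$ a priori lives only in $\M\big((\Cz(\G)\rtimes_{\rho}\Gamma^{2})\tensormin(\Cz(\G)\rtimes_{\rho}\Gamma^{2})\big)$, knowing its image under the non-injective map $\pi^{\mathrm{can}}\tensor\pi^{\mathrm{can}}$ determines neither the element nor its membership in $\M(\Cz(\G)^{\overline{\Psi}\boxtimes\dot{\Psi}}\tensormin\Cz(\G)^{\overline{\Psi}\boxtimes\dot{\Psi}})$; and invoking faithfulness on the Landstad tensor square presupposes exactly the membership you are trying to prove. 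The identity must be established at the crossed-product level, namely $\big(\i\tensor(\Ad((\lambda^{R}\tensor\lambda^{L})(\Upsilon))\circ\tilde{\Delta})\big)(\hat{\mathcal{W}}_{\Psi})=(\hat{\mathcal{W}}_{\Psi})_{13}(\hat{\mathcal{W}}_{\Psi})_{12}$ with $\hat{\mathcal{W}}_{\Psi}$, not $\hat{W}_{\Psi}$, on the right; your factor-by-factor computation in fact stays at that level until you pass through $\pi^{\mathrm{can}}$, so the cure is simply not to pass through it. Even then, extracting the restriction claim is not ``mild technical care'': one expands $(\omega_{\zeta,\eta}\tensor\i)$ of the right-hand side as $\sum_{i}(\omega_{\zeta,e_{i}}\tensor\i)(\hat{\mathcal{W}}_{\Psi})\tensor(\omega_{e_{i},\eta}\tensor\i)(\hat{\mathcal{W}}_{\Psi})$, obtains strict convergence in $\M(\Cz(\G)^{\overline{\Psi}\boxtimes\dot{\Psi}}\tensormin\Cz(\G)^{\overline{\Psi}\boxtimes\dot{\Psi}})$ by transporting $(\i\tensor\Delta_{\GPsi})(\hat{W}_{\Psi})=(\hat{W}_{\Psi})_{13}(\hat{W}_{\Psi})_{12}$ through the isomorphism $\pi^{\Psi}$, and only then compares with the strict convergence in the ambient algebra using the ``easy part'' of Observation~\ref{obs:two_strict_top}. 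That two-topology comparison is where the content of the restriction statement sits.

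Separately, the formulas $(\i\tensor\Delta)(\hat{W})=\hat{W}_{12}\hat{W}_{13}$ and $(\i\tensor\Delta_{\GPsi})(\hat{W}_{\Psi})=(\hat{W}_{\Psi})_{12}(\hat{W}_{\Psi})_{13}$ you invoke have the factors in the wrong order: with the conventions of the paper $(\i\tensor\Delta)(\hat{W})=\hat{W}_{13}\hat{W}_{12}$. As $\hat{W}_{12}$ and $\hat{W}_{13}$ do not commute, the residual identity you would be left to verify with your ordering is generically false; the correct target is $(\hat{\mathcal{W}}_{\Psi})_{13}(\hat{\mathcal{W}}_{\Psi})_{12}$, and with that ordering the remaining cocycle computation goes through exactly as you describe.
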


The theorem is proved precisely as \cite[Theorem~4.11]{Kasprzak__Rieffel_deform_crossed_prod}
or \cite[Proposition~7]{Fima_Vainerman__twist_Rieffel_deform}. We
sketch the details for completeness.
\begin{proof}[Proof of Theorem~\ref{thm:Psi_comult_crossed_prod}]
By Theorem~\ref{thm:C0_G_Psi_crossed_prod} it suffices to show
that 
\begin{equation}
\big(\i\tensor(\Ad((\lambda^{R}\tensor\lambda^{L})(\Upsilon))\circ\tilde{\Delta})\big)(\hat{\mathcal{W}}_{\Psi})={(\hat{\mathcal{W}}_{\Psi})}_{13}{(\hat{\mathcal{W}}_{\Psi})}_{12}.\label{eq:comults_1}
\end{equation}
Indeed, suppose that the above formula has already been established.
Then for $\zeta,\eta\in\Ltwo(\G)$, we get from the equality $(\i\tensor\Delta_{\GPsi})(\hat{W}_{\Psi})={(\hat{W}_{\Psi})}_{13}{(\hat{W}_{\Psi})}_{12}$
and \cite[Lemma~A.5]{Kustermans_Vaes__LCQG_C_star} that 
\[
\Delta_{\GPsi}((\omega_{\zeta,\eta}\tensor\i)(\hat{W}_{\Psi}))=\sum_{i\in I}(\omega_{\zeta,e_{i}}\tensor\i)(\hat{W}_{\Psi})\tensor(\omega_{e_{i},\eta}\tensor\i)(\hat{W}_{\Psi}),
\]
where $\left(e_{i}\right)_{i\in I}$ is an orthonormal basis of $\Ltwo(\G)$
and the series, which has bounded partial sums, converges strictly
in $\M(\Cz(\GPsi)\tensormin\Cz(\GPsi))$. As a result, since $\pi^{\Psi}:\Cz(\G)^{\overline{\Psi}\boxtimes\dot{\Psi}}\to\Cz(\GPsi)$
is an isomorphism mapping $(\omega\tensor\i)(\hat{\mathcal{W}}_{\Psi})$
to $(\omega\tensor\i)(\hat{W}_{\Psi})$ for all $\omega\in\Lone(\hat{\G})$,
we have
\begin{equation}
\big(({(\pi^{\Psi})}^{-1}\tensor{(\pi^{\Psi})}^{-1})\circ\Delta_{\GPsi}\big)((\omega_{\zeta,\eta}\tensor\i)(\hat{W}_{\Psi}))=\sum_{i\in I}(\omega_{\zeta,e_{i}}\tensor\i)(\hat{\mathcal{W}}_{\Psi})\tensor(\omega_{e_{i},\eta}\tensor\i)(\hat{\mathcal{W}}_{\Psi}),\label{eq:comults_1.5}
\end{equation}
where the series has bounded partial sums and converges strictly in
$\M(\Cz(\G)^{\overline{\Psi}\boxtimes\dot{\Psi}}\tensormin\Cz(\G)^{\overline{\Psi}\boxtimes\dot{\Psi}})$.
On the other hand, by (\ref{eq:comults_1}), the series on the right-hand
side of (\ref{eq:comults_1.5}) converges strictly in $\M((\Cz(\G)\rtimes_{\rho}\Gamma^{2})\tensormin(\Cz(\G)\rtimes_{\rho}\Gamma^{2}))$
to the image of $(\omega_{\zeta,\eta}\tensor\i)(\hat{\mathcal{W}}_{\Psi})$
under $\Ad((\lambda^{R}\tensor\lambda^{L})(\Upsilon))\circ\tilde{\Delta}$.
Since on bounded subsets of $\M(\Cz(\G)^{\overline{\Psi}\boxtimes\dot{\Psi}}\tensormin\Cz(\G)^{\overline{\Psi}\boxtimes\dot{\Psi}})$
the strict topology is finer than the strict topology induced from
$\M((\Cz(\G)\rtimes_{\rho}\Gamma^{2})\tensormin(\Cz(\G)\rtimes_{\rho}\Gamma^{2}))$
(the `easy part' of Observation~\ref{obs:two_strict_top}), we
deduce that $\Ad((\lambda^{R}\tensor\lambda^{L})(\Upsilon))\circ\tilde{\Delta}$
maps $(\omega_{\zeta,\eta}\tensor\i)(\hat{\mathcal{W}}_{\Psi})$ to
the element of $\M(\Cz(\G)^{\overline{\Psi}\boxtimes\dot{\Psi}}\tensormin\Cz(\G)^{\overline{\Psi}\boxtimes\dot{\Psi}})\subseteq\M((\Cz(\G)\rtimes_{\rho}\Gamma^{2})\tensormin(\Cz(\G)\rtimes_{\rho}\Gamma^{2}))$
that is on the left-hand side of (\ref{eq:comults_1.5}). Therefore,
by density, $\Ad((\lambda^{R}\tensor\lambda^{L})(\Upsilon))\circ\tilde{\Delta}$
restricts to a map $\Delta^{\Psi}:\Cz(\G)^{\overline{\Psi}\boxtimes\dot{\Psi}}\to\M(\Cz(\G)^{\overline{\Psi}\boxtimes\dot{\Psi}}\tensormin\Cz(\G)^{\overline{\Psi}\boxtimes\dot{\Psi}})$
satisfying $(\pi^{\Psi}\tensor\pi^{\Psi})\circ\Delta^{\Psi}=\Delta_{\GPsi}\circ\pi^{\Psi}$.
Since $\Delta_{\GPsi}:\Cz(\GPsi)\to\M(\Cz(\GPsi)\tensormin\Cz(\GPsi))$
is non-degenerate (and, again, $\pi^{\Psi}:\Cz(\G)^{\overline{\Psi}\boxtimes\dot{\Psi}}\to\Cz(\GPsi)$
is an isomorphism), $\Delta^{\Psi}$ is also non-degenerate.

We proceed to prove (\ref{eq:comults_1}). Since $(\i\tensor\Delta)(\hat{W})=\hat{W}_{13}\hat{W}_{12}$,
(\ref{eq:comults_1}) is equivalent to 
\begin{multline*}
\Ad(\one\tensor(\lambda^{R}\tensor\lambda^{L})(\Upsilon))\big((L\tensor\lambda^{R})(\mathring{\Psi}))_{13}\cdot(\i\tensor\iota\tensor\iota)(\hat{W}_{13}\hat{W}_{12})\cdot(L\tensor\lambda^{L})(\overline{\Psi})_{12}\big)\\
=(L\tensor\lambda^{R})(\mathring{\Psi})_{13}\cdot(\i\tensor\iota)(\hat{W})_{13}\cdot(L\tensor\lambda^{R}\tensor\lambda^{L})(\overline{\Psi}_{13}\mathring{\Psi}_{12})\cdot(\i\tensor\iota)(\hat{W})_{12}\cdot(L\tensor\lambda^{L})(\overline{\Psi})_{12},
\end{multline*}
which in turn is the same as 
\begin{equation}
\Ad(\one\tensor(\lambda^{R}\tensor\lambda^{L})(\Upsilon))\big((\i\tensor\iota\tensor\iota)(\hat{W}_{13}\hat{W}_{12})\big)=(\i\tensor\iota)(\hat{W})_{13}\cdot(L\tensor\lambda^{R}\tensor\lambda^{L})(\overline{\Psi}_{13}\mathring{\Psi}_{12})\cdot(\i\tensor\iota)(\hat{W})_{12}.\label{eq:comults_2}
\end{equation}
 By (\ref{eq:lambda_L_lambda_R}) the left-hand side of (\ref{eq:comults_2})
is 
\[
(\i\tensor\iota)(\hat{W})_{13}\cdot(L\tensor\lambda^{R}\tensor\lambda^{L})\big((\sigma\tensor\id)((\i\tensor\Delta_{\hat{\Gamma}})(\Upsilon))\cdot(\Delta_{\hat{\Gamma}}\tensor\i)(\Upsilon^{*})\big)\cdot(\i\tensor\iota)(\hat{W})_{12}.
\]
One verifies by the $2$-cocycle condition that 
\[
(\sigma\tensor\id)((\i\tensor\Delta_{\hat{\Gamma}})(\Upsilon))\cdot(\Delta_{\hat{\Gamma}}\tensor\i)(\Upsilon^{*})=\overline{\Psi}_{13}\mathring{\Psi}_{12},
\]
proving (\ref{eq:comults_2}) as desired.
\end{proof}
In light of Theorems~\ref{thm:C0_G_Psi_crossed_prod} and \ref{thm:Psi_comult_crossed_prod},
we call the locally compact quantum group $\GPsi$ the \emph{Rieffel
deformation locally compact quantum group} constructed from $\G,\Gamma,\Psi$.

\subsection{Properties of the Rieffel deformation of locally compact quantum
groups\label{subsec:props_Rieffel_deform_LCQGs}}

Having described the Rieffel deformation locally compact quantum group
$\GPsi$ in terms of the Rieffel deformation \cst-algebra $\Cz(\G)^{\overline{\Psi}\boxtimes\dot{\Psi}}$,
we proceed to establish some further properties of $\GPsi$. We continue
to use Standing~Hypothesis~\ref{stand_hyp} and the notation of
the previous subsection.

Since the injective normal $*$-homomorphism $L:\Linfty(\hat{\Gamma})\to\Linfty(\hat{\G})=\Linfty(\hat{\G}_{\Psi})$
intertwines the co-multiplications of $\hat{\Gamma}$ and $\hat{\G}$,
it also intertwines the co-multiplications of $\hat{\Gamma}$ and
$\hat{\G}_{\Psi}=\widehat{\GPsi}$: 
\begin{equation}
\Delta_{\hat{\G}_{\Psi}}\circ L=(\Ad((L\tensor L)(\Psi)))\circ\Delta_{\hat{\G}}\circ L=(\Ad((L\tensor L)(\Psi)))\circ(L\tensor L)\circ\Delta_{\hat{\Gamma}}=(L\tensor L)\circ\Delta_{\hat{\Gamma}},\label{eq:Gamma_le_G_Psi}
\end{equation}
and thus makes $\Gamma$ into a closed quantum subgroup of the Rieffel
deformation $\GPsi$ (this is also mentioned in \cite[Section~4, p.~1028]{Fima_Vainerman__twist_Rieffel_deform}).
This, together with the observations made in the beginning of Subsection~\ref{subsec:cocycle_twisting},
shows that we can view $\G$ as the Rieffel deformation of $\GPsi$
using the conjugate cocycle $\overline{\Psi}$, i.e.~$(\GPsi)^{\overline{\Psi}}\cong\G$.
In Proposition~\ref{prop:from_G_Psi_to_G} below we discuss how this
isomorphism looks like from the perspective of the Rieffel deformation
of \cst-algebras.

\subsubsection{The co-unit and the antipode}

We say that \emph{$\Gamma\le\G$ factors through the centre $\mathscr{Z}(\G)$
of $\G$} \cite{Kasprzak_Skalski_Soltan__can_centr_exa_seq_LCQG}
if the image of the embedding $L:\Linfty(\hat{\Gamma})\to\Linfty(\hat{\G})$
is contained in $\Linfty(\widehat{\mathscr{Z}(\G)})$, the largest
central Baaj--Vaes subalgebra of $\Linfty(\hat{\G})$ \cite[Definition~2.2]{Kasprzak_Skalski_Soltan__can_centr_exa_seq_LCQG}.
In this case $R_{\gamma}=L_{-\gamma}$ for all $\gamma\in\Gamma$
and $R\left(g\right)=L(g(-\cdot))$ for all $g\in\Cz(\hat{\Gamma})$.
Note that if $\G$ is a locally compact group $G$, then $\Gamma\le G$
factors through the centre in the above sense if and only if $\Gamma$
is contained in the centre of $G$; this happens trivially if $G$
is commutative, and in particular if $G=\Gamma$.
\begin{prop}
\label{prop:Gamma_fact_thru_center}If $\Gamma\le\G$ factors through
the centre of $\G$ then $\hat{W}_{\Psi}=\hat{W}$ and hence $\GPsi=\G$,
and therefore $\pi^{\Psi}$ is an isomorphism from $\Cz(\G)^{\overline{\Psi}\boxtimes\dot{\Psi}}$
onto $\Cz(\G)$ and $(\pi^{\Psi}\tensor\pi^{\Psi})\circ\Delta^{\Psi}=\Delta_{\G}\circ\pi^{\Psi}$.
\end{prop}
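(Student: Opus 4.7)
Since Theorem~\ref{thm:C0_G_Psi_crossed_prod} already gives $\pi^{\Psi}$ as an isomorphism onto $\Cz(\GPsi)$ and Theorem~\ref{thm:Psi_comult_crossed_prod} already provides the intertwining with $\Delta_{\GPsi}$, the entire proposition reduces to proving the single equality $\hat{W}_{\Psi}=\hat{W}$ of left regular representations. Looking at the explicit formula (\ref{eq:W_hat_Psi}), this amounts to the key identity
\[
(L\tensor R)(\mathring{\Psi})=\hat{W}\cdot(L\tensor L)(\Psi)\cdot\hat{W}^{*},
\]
because substituting it into (\ref{eq:W_hat_Psi}) and using unitarity of $(L\tensor L)(\Psi)$ yields $\hat{W}_{\Psi}=\hat{W}$ at once.

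I would prove the key identity first on characters and then extend by normality. Centrality gives $R_{\gamma}=L_{-\gamma}$, so (\ref{eq:R_gamma_group_like}) with $\gamma\leftrightarrow-\gamma$ reads $\hat{W}(\one\tensor L_{\gamma})\hat{W}^{*}=L_{-\gamma}\tensor L_{\gamma}$; combining this with $\hat{W}(L_{\gamma}\tensor L_{\gamma})\hat{W}^{*}=\one\tensor L_{\gamma}$ extracted from (\ref{eq:L_gamma_group_like}) yields $\hat{W}(L_{\gamma}\tensor\one)\hat{W}^{*}=L_{\gamma}\tensor\one$ for every $\gamma\in\Gamma$. Multiplying the two factored pieces together gives
\[
\hat{W}(L_{\gamma_{1}}\tensor L_{\gamma_{2}})\hat{W}^{*}=L_{\gamma_{1}-\gamma_{2}}\tensor L_{\gamma_{2}}\qquad(\gamma_{1},\gamma_{2}\in\Gamma).
\]
Translating to $\Linfty(\hat{\Gamma}^{2})$, this says that conjugation by $\hat{W}$ realises, on the image of $L\tensor L$, the automorphism obtained by precomposition with the map $(\hat{\gamma}_{1},\hat{\gamma}_{2})\mapsto(\hat{\gamma}_{1},\hat{\gamma}_{2}-\hat{\gamma}_{1})$: on the character $\gamma_{1}\tensor\gamma_{2}$ one directly verifies $L_{\gamma_{1}-\gamma_{2}}\tensor L_{\gamma_{2}}=(L\tensor L)\bigl((\hat{\gamma}_{1},\hat{\gamma}_{2})\mapsto(\gamma_{1}\tensor\gamma_{2})(\hat{\gamma}_{1},\hat{\gamma}_{2}-\hat{\gamma}_{1})\bigr)$. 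By strong density of the characters in $\Linfty(\hat{\Gamma}^{2})$ and normality of $L\tensor L$, this extends to $\hat{W}(L\tensor L)(\Psi)\hat{W}^{*}=(L\tensor L)(F)$ with $F(\hat{\gamma}_{1},\hat{\gamma}_{2})=\Psi(\hat{\gamma}_{1},\hat{\gamma}_{2}-\hat{\gamma}_{1})$.

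On the other side, centrality also gives $R(g)=L(g(-\cdot))$, so $(L\tensor R)(\mathring{\Psi})=(L\tensor L)(G)$ with $G(\hat{\gamma}_{1},\hat{\gamma}_{2})=\mathring{\Psi}(\hat{\gamma}_{1},-\hat{\gamma}_{2})$, which by the definition of $\mathring{\Psi}$ in (\ref{eq:Upsilon_Psi_ring}) equals $\Psi(\hat{\gamma}_{1},\hat{\gamma}_{2}-\hat{\gamma}_{1})=F(\hat{\gamma}_{1},\hat{\gamma}_{2})$. Thus $F=G$, the key identity holds, and therefore $\hat{W}_{\Psi}=\hat{W}$; the remaining conclusions then fall out of Theorems~\ref{thm:C0_G_Psi_crossed_prod} and \ref{thm:Psi_comult_crossed_prod}. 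The main (and really only) point requiring some care in a full write-up is the passage from characters to a general measurable $2$-cocycle $\Psi$, but given the normality of the maps involved and the density of characters this is routine.
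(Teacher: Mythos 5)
Your argument is correct and is essentially the paper's own proof in a lightly rearranged form: both reduce the statement to $\hat{W}_{\Psi}=\hat{W}$ via formula (\ref{eq:W_hat_Psi}), both use centrality to identify $R$ with $L$ composed with inversion, and both push $\hat{W}$ through the image of the group algebra of $\hat{\Gamma}^{2}$ using the group-like relations (\ref{eq:L_gamma_group_like})--(\ref{eq:R_gamma_group_like}), ending with the same elementary identity relating $\Psi$ and $\mathring{\Psi}$ under the shear $(\hat{\gamma}_{1},\hat{\gamma}_{2})\mapsto(\hat{\gamma}_{1},\hat{\gamma}_{1}+\hat{\gamma}_{2})$. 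The only cosmetic difference is that the paper conjugates the $(L\tensor R)$-picture while you conjugate the $(L\tensor L)$-picture; the extension from characters to general $\Psi$ is handled the same way in both.
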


\begin{proof}
Since $\Gamma\le\G$ factors through the centre of $\G$ we have $(L\tensor L)(\overline{\Psi})=(L\tensor R)(\overline{\Psi}(\cdot,-\cdot))$
(see above), so that by the definition (\ref{eq:W_hat_Psi}) of $\hat{W}_{\Psi}$,
\begin{equation}
\hat{W}_{\Psi}=(L\tensor R)(\mathring{\Psi})\cdot\hat{W}\cdot(L\tensor R)(\overline{\Psi}(\cdot,-\cdot)).\label{eq:Gamma_fact_thru_center__1}
\end{equation}
Denote by $\a$ the homeomorphism of $\hat{\Gamma}^{2}$ given by
$(\hat{\gamma}_{1},\hat{\gamma}_{2})\mapsto(\hat{\gamma}_{1},\hat{\gamma}_{1}+\hat{\gamma}_{2})$
for $\hat{\gamma}_{1},\hat{\gamma}_{2}\in\hat{\Gamma}$. We claim
that
\begin{equation}
\hat{W}\cdot(L\tensor R)(G)\cdot\hat{W}^{*}=(L\tensor R)(G\circ\a)\qquad(\forall_{G\in\Cz(\hat{\Gamma}^{2})}).\label{eq:Gamma_fact_thru_center__2}
\end{equation}
Indeed, (\ref{eq:Gamma_fact_thru_center__2}) holds for simple tensors
in $\Cz(\hat{\Gamma}^{2})\cong\Cz(\hat{\Gamma})\tensormin\Cz(\hat{\Gamma})$
by virtue of the assumption on $\Gamma$, namely that the image of
$L$ is contained in central subalgebra $\Linfty(\widehat{\mathscr{Z}(\G)})$
of $\Linfty(\hat{\G})$, and (\ref{eq:R_gamma_group_like}) (see also
(\ref{eq:lambda_L_lambda_R})). Combining (\ref{eq:Gamma_fact_thru_center__1})
and (\ref{eq:Gamma_fact_thru_center__2}) we get the desired equality
\[
\hat{W}_{\Psi}=(L\tensor R)(\mathring{\Psi})\cdot(L\tensor R)(\overline{\Psi}(\cdot,-\cdot)\circ\a)\cdot\hat{W}=\hat{W}
\]
because $\Psi(\cdot,-\cdot)\circ\a=\mathring{\Psi}$ (recall the definition
of $\mathring{\Psi}$ in (\ref{eq:Upsilon_Psi_ring})).

The rest of the assertions follow from Theorems~\ref{thm:C0_G_Psi_crossed_prod}
and \ref{thm:Psi_comult_crossed_prod}.
\end{proof}
Let us return to the general situation of Standing~Hypothesis~\ref{stand_hyp},
assuming simply that $\Gamma$ is a closed quantum subgroup of $\G$. 

We will require a concrete formula for the co-unit of $\GPsi$ that
fits our framework of the Rieffel deformation of \cst-algebras. To
this end we will assume in Proposition~\ref{prop:G_Psi_co_unit}
below that $\G$ is co-amenable. However, we emphasise that this requirement
could possibly be lifted by finding a description of $\CzU(\GPsi)$
as a Rieffel deformation of $\CzU(\G)$.

To prepare for Proposition~\ref{prop:G_Psi_co_unit} we note that
similarly to constructing the locally compact quantum group $\GPsi$
from $\Gamma\le\G$ and the $2$-cocycle $\Psi$ on $\hat{\Gamma}$,
one can construct the locally compact quantum group $\GammaPsi$ from
$\Gamma\le\Gamma$ and the same $\Psi$. By Proposition~\ref{prop:Gamma_fact_thru_center}
we have $\GammaPsi=\Gamma$. As usual, write $\rho$ for the left-right
translation action $\Gamma^{2}\curvearrowright\Cz(\G)$ and $\pi^{\Psi}$
for the isomorphism $\Cz(\G)^{\overline{\Psi}\boxtimes\dot{\Psi}}\to\Cz(\GPsi)$
introduced after Theorem~\ref{thm:C0_G_Psi_crossed_prod}, and now
also write $\Gamma^{2}\overset{\rho_{\Gamma}}{\curvearrowright}\Cz(\Gamma)$
and $\pi_{\Gamma}^{\Psi}:\Cz(\Gamma)^{\overline{\Psi}\boxtimes\dot{\Psi}}\to\Cz(\GammaPsi)=\Cz(\Gamma)$
for the analogous maps associated with $\Gamma\le\Gamma$ and $\Psi$.
\begin{prop}
\label{prop:G_Psi_co_unit}The locally compact quantum group $\G$
is co-amenable if and only if so is $\GPsi$. In this case the strong
quantum homomorphism $\Pi:\Cz(\G)\to\Cz(\Gamma)$ associated to $\Gamma$
being a closed quantum subgroup of $\G$ induces a canonical morphism
$\tilde{\Pi}\in\Mor(\Cz(\G)\rtimes_{\rho}\Gamma^{2},\Cz(\Gamma)\rtimes_{\rho_{\Gamma}}\Gamma^{2})$
(`acting on the $\Cz(\G)$ part') and this $\tilde{\Pi}$ restricts
to a morphism $\Pi^{\Psi}\in\Mor(\Cz(\G)^{\overline{\Psi}\boxtimes\dot{\Psi}},\Cz(\Gamma)^{\overline{\Psi}\boxtimes\dot{\Psi}})$.
Also $\pi_{\Gamma}^{\Psi}\circ\Pi^{\Psi}\circ{(\pi^{\Psi})}^{-1}\in\Mor(\Cz(\GPsi),\Cz(\GammaPsi=\Gamma))$
is the strong quantum homomorphism associated to $\Gamma$ being a
closed quantum subgroup of $\GPsi$, and $\epsilon_{\GPsi}:=\epsilon_{\Gamma}\circ\pi_{\Gamma}^{\Psi}\circ\Pi^{\Psi}\circ{(\pi^{\Psi})}^{-1}\in\Cz(\GPsi)^{*}$
is the co-unit of $\GPsi$ where $\epsilon_{\Gamma}\in\Cz(\Gamma)^{*}$
is the co-unit of $\Gamma$.
\end{prop}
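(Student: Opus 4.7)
The plan is to proceed in four stages: I will first construct $\tilde\Pi$ and restrict it to $\Pi^\Psi$, then identify the transferred morphism with the canonical strong quantum hom for $\Gamma\le\GPsi$, and finally derive both the co-amenability equivalence and the explicit counit formula.

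I would begin by noting that $\Pi$ intertwines the left-right translation actions: $\Pi\circ\rho_{\gamma_1,\gamma_2}=\rho_{\Gamma;\gamma_1,\gamma_2}\circ\Pi$ for all $(\gamma_1,\gamma_2)\in\Gamma^{2}$. Indeed, these actions are implemented by conjugation with the unitaries $L_\gamma R_\gamma$, whose definition is dictated by the common Vaes embedding $L$, so the compatibility follows either from the bicharacter description of strong quantum homs in \cite{Meyer_Roy_Woronowicz__hom_quant_grps} or directly by combining $(\Pi\otimes\Pi)\circ\Delta_\G=\Delta_\Gamma\circ\Pi$ with (\ref{eq:Delta_left_right_transl}). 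The universal property of the crossed product then yields $\tilde\Pi\in\Mor(\Cz(\G)\rtimes_{\rho}\Gamma^{2},\Cz(\Gamma)\rtimes_{\rho_{\Gamma}}\Gamma^{2})$ acting as $\Pi$ on the $\Cz(\G)$-part and as the identity on the canonical copy of $\Gamma^{2}$ inside the crossed products. The cocycle unitaries $U_{\hat\gamma}$ from (\ref{eq:deformed_dual_action}) live entirely in the $\lambda$-part and are therefore preserved by $\tilde\Pi$, which consequently intertwines the deformed dual actions $\hat\rho^{\overline\Psi\boxtimes\dot\Psi}$ on both sides. The Landstad conditions (L1)--(L3) are then preserved: (L1) by this equivariance, (L2) by boundedness of $\tilde\Pi$, and (L3) since $\lambda(f)\tilde\Pi(x)\lambda(g)=\tilde\Pi(\lambda(f)x\lambda(g))$. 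This gives the desired restriction $\Pi^\Psi$.

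Next, $\tilde\Pi$ intertwines $\tilde\Delta$ with $\tilde\Delta_\Gamma$ (from $(\Pi\otimes\Pi)\circ\Delta_\G=\Delta_\Gamma\circ\Pi$ combined with its identity action on the $\Gamma^{2}$-part), while $\tilde\Pi\otimes\tilde\Pi$ commutes with $\Ad((\lambda^{R}\otimes\lambda^{L})(\Upsilon))$ because this unitary sits in the $\lambda$-part. Together with Theorem~\ref{thm:Psi_comult_crossed_prod} this gives $(\tilde\Pi\otimes\tilde\Pi)\circ\Delta^\Psi=\Delta_\Gamma^\Psi\circ\Pi^\Psi$, and transporting via $\pi^\Psi$ and $\pi_\Gamma^\Psi$ (with $\Cz(\GammaPsi)=\Cz(\Gamma)$ by Proposition~\ref{prop:Gamma_fact_thru_center}) produces $\tilde\Pi_\Psi\in\Mor(\Cz(\GPsi),\Cz(\Gamma))$ intertwining the respective co-multiplications. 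Since (\ref{eq:Gamma_le_G_Psi}) exhibits the very same $L$ as the Vaes embedding witnessing both $\Gamma\le\G$ and $\Gamma\le\GPsi$, the bijection between Vaes embeddings and strong quantum homs \cite{Daws_Kasprzak_Skalski_Soltan__closed_q_subgroups_LCQGs} forces $\tilde\Pi_\Psi$ to coincide with the canonical strong quantum hom associated to $\Gamma\le\GPsi$.

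For the remaining assertions, I would use the invertibility $(\GPsi)^{\overline\Psi}\cong\G$ discussed in Subsection~\ref{subsec:props_Rieffel_deform_LCQGs} to reduce the iff to the forward direction. Assuming $\G$ co-amenable, $\epsilon_\Gamma\circ\tilde\Pi_\Psi$ is a bounded character on $\Cz(\GPsi)$ since $\Gamma$ is abelian and hence co-amenable. To verify that this character is the co-unit, I would pass to the universal level: the Vaes closed quantum subgroup $\Gamma\le\GPsi$ induces a universal strong quantum hom $\Pi^u_\Psi\in\Mor(\CzU(\GPsi),\CzU(\Gamma)=\Cz(\Gamma))$, and by the standard Hopf-algebraic naturality of the co-unit one has $\epsilon^u_{\GPsi}=\epsilon_\Gamma\circ\Pi^u_\Psi$. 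Uniqueness of universal strong quantum homs given their underlying Vaes embedding then forces $\Pi^u_\Psi=\tilde\Pi_\Psi\circ\Lambda_{\GPsi}$, where $\Lambda_{\GPsi}:\CzU(\GPsi)\to\Cz(\GPsi)$ is the reducing morphism. Consequently $\epsilon^u_{\GPsi}$ factors through $\Lambda_{\GPsi}$ as $\epsilon_\Gamma\circ\tilde\Pi_\Psi$, yielding both co-amenability of $\GPsi$ and the claimed counit formula. The main obstacle I expect is precisely this last identification $\Pi^u_\Psi=\tilde\Pi_\Psi\circ\Lambda_{\GPsi}$, which relies on uniqueness of universal strong quantum homs given the Vaes embedding; while this is in the spirit of \cite{Kustermans__LCQG_universal,Meyer_Roy_Woronowicz__hom_quant_grps,Daws_Kasprzak_Skalski_Soltan__closed_q_subgroups_LCQGs}, it requires careful checking in the generality of Standing~Hypothesis~\ref{stand_hyp}. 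As a fallback, one may verify the counit axiom for $\epsilon_\Gamma\circ\tilde\Pi_\Psi$ directly in the crossed product picture using $\Delta^\Psi=\Ad((\lambda^R\otimes\lambda^L)(\Upsilon))\circ\tilde\Delta$, at the cost of a somewhat lengthy computation tracking how $\epsilon_\Gamma$ interacts with the twisting unitary.
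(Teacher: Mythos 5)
Your first stage (constructing $\tilde{\Pi}$ from the equivariance of $\Pi$ and checking that it preserves the Landstad conditions) is sound and is essentially what the paper does, except that the paper simply invokes Kasprzak's ``morphism of deformation data'' machinery (\cite[Proposition~3.8]{Kasprzak__Rieffel_deform_crossed_prod}) instead of re-verifying (L1)--(L3) by hand; the reduction of the ``if and only if'' to one implication via $(\GPsi)^{\overline{\Psi}}\cong\G$ also matches the paper. The problem lies in your identification step. Knowing that $\tilde{\Pi}_{\Psi}:=\pi_{\Gamma}^{\Psi}\circ\Pi^{\Psi}\circ{(\pi^{\Psi})}^{-1}$ intertwines the co-multiplications only tells you that $\tilde{\Pi}_{\Psi}\circ\Lambda_{\GPsi}$ is \emph{some} Hopf $*$-homomorphism $\CzU(\GPsi)\to\Cz(\Gamma)$, hence corresponds to \emph{some} bicharacter. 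The uniqueness statement you invoke pins down the strong quantum homomorphism \emph{given its bicharacter}; it does not tell you that the bicharacter of $\tilde{\Pi}_{\Psi}\circ\Lambda_{\GPsi}$ is the one determined by $L$. Establishing that is precisely the computational content you are missing, and it is where the paper's proof does its real work: using $(\i\tensor\Pi)(\hat{W})=(L\tensor\i)(\hat{W}_{\Gamma})$ and the explicit form \eqref{eq:cal_W_hat_Psi} of $\hat{\mathcal{W}}_{\Psi}$, one computes $\tilde{\Pi}_{\Psi}\big((\omega\tensor\i)(\hat{W}_{\Psi})\big)=((\omega\circ L)\tensor\i)(\hat{W}_{\Gamma})$ and compares this with the known slice formula for the canonical $\Pi_{\GPsi}$ on the half-universal unitary $\hat{\wW}_{\Psi}$. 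That comparison simultaneously identifies $\tilde{\Pi}_{\Psi}$ with the strong quantum homomorphism, shows $\Pi_{\GPsi}$ factors through $\Cz(\GPsi)$, and hence that $\epsilon_{\GPsi}=\epsilon_{\Gamma}\circ\Pi_{\GPsi}$ does too, which is exactly co-amenability of $\GPsi$ together with the claimed formula. You flagged this identification as your ``main obstacle,'' and rightly so: as written it is asserted, not proved.

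Your fallback (verifying the co-unit identity for $\epsilon_{\Gamma}\circ\tilde{\Pi}_{\Psi}$ directly in the crossed-product picture) would, if carried out, yield co-amenability of $\GPsi$ and the co-unit formula, but it would still not prove the remaining assertion of the proposition, namely that $\tilde{\Pi}_{\Psi}$ \emph{is} the strong quantum homomorphism associated to $\Gamma\le\GPsi$. So either way the slice computation on the multiplicative unitaries (or an equivalent identification of the bicharacter) cannot be avoided. A smaller point: the relation \eqref{eq:Pi_L} that drives this computation is itself stated in the paper in a form ``slightly simplified thanks to co-amenability,'' so the hypothesis that $\G$ is co-amenable enters earlier than your write-up suggests.
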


\begin{proof}
Note that to show the first statement of the proposition it suffices
to prove the forward implication (the converse follows by symmetry).
So we assume that $\G$ is co-amenable.

The connection between $\Pi$ and $L$, which is slightly simplified
thanks to co-amenability, is
\begin{equation}
(\Pi\tensor\i)(W)=(\i\tensor L)(W_{\Gamma})\text{, equivalently }(\i\tensor\Pi)(\hat{W})=(L\tensor\i)(\hat{W}_{\Gamma}).\label{eq:Pi_L}
\end{equation}
This, the definition of $\rho^{L}$ and (\ref{eq:L_gamma_group_like})
imply that $\Pi$ intertwines $\rho^{L}$ with the left translation
action of $\Gamma$ on $\Cz(\Gamma)$. An analogous statement is true
for $\rho^{R}$ and thus also for $\rho$. Therefore, using the terminology
of \cite[Definition~3.7]{Kasprzak__Rieffel_deform_crossed_prod},
the pair $(\Pi,\id:\Gamma^{2}\to\Gamma^{2})$ is a morphism of deformation
data from $(\Cz(\G),\rho,\overline{\Psi}\boxtimes\dot{\Psi})$ to
$(\Cz(\Gamma),\rho_{\Gamma},\overline{\Psi}\boxtimes\dot{\Psi})$.
Hence \cite[Proposition~3.8]{Kasprzak__Rieffel_deform_crossed_prod}
applies, and yields that the induced morphism $\tilde{\Pi}\in\Mor(\Cz(\G)\rtimes_{\rho}\Gamma^{2},\Cz(\Gamma)\rtimes_{\rho_{\Gamma}}\Gamma^{2})$
(mapping $f\in\Cc(\Gamma^{2},\Cz(\G))$ to $\Pi\circ f\in\Cc(\Gamma^{2},\Cz(\Gamma))$)
 restricts to an element $\Pi^{\Psi}\in\Mor(\Cz(\G)^{\overline{\Psi}\boxtimes\dot{\Psi}},\Cz(\Gamma)^{\overline{\Psi}\boxtimes\dot{\Psi}})$,
and since $\Pi(\Cz(\G))=\Cz(\Gamma)$ we have $\Pi^{\Psi}(\Cz(\G)^{\overline{\Psi}\boxtimes\dot{\Psi}})=\Cz(\Gamma)^{\overline{\Psi}\boxtimes\dot{\Psi}}$. 

Consider the operators $\hat{W}_{\Psi,\Gamma},\hat{\mathcal{W}}_{\Psi,\Gamma}$
pertaining to the Rieffel deformation locally compact quantum group
$\GammaPsi$. By Proposition \ref{prop:Gamma_fact_thru_center} we
have $\hat{W}_{\Psi,\Gamma}=\hat{W}_{\Gamma}$ and thus $\GammaPsi=\Gamma$.
Keeping in mind Theorem~\ref{thm:C0_G_Psi_crossed_prod}, by (\ref{eq:Pi_L})
and the definition (\ref{eq:cal_W_hat_Psi}) of $\hat{\mathcal{W}}_{\Psi}$,
for all $\omega\in\Lone(\hat{\G})$ we have 
\[
\Pi^{\Psi}\big((\omega\tensor\i)(\hat{\mathcal{W}}_{\Psi})\big)=((\omega\circ L)\tensor\i)(\hat{\mathcal{W}}_{\Psi,\Gamma}),
\]
and consequently
\begin{equation}
\big(\pi_{\Gamma}^{\Psi}\circ\Pi^{\Psi}\circ{(\pi^{\Psi})}^{-1}\big)((\omega\tensor\i)(\hat{W}_{\Psi}))=((\omega\circ L)\tensor\i)(\hat{W}_{\Psi,\Gamma})=((\omega\circ L)\tensor\i)(\hat{W}_{\Gamma}).\label{eq:G_Psi_co_unit}
\end{equation}
On the other hand, the strong quantum homomorphism $\Pi_{\GPsi}:\CzU(\GPsi)\to\Cz(\Gamma)$
associated to $\Gamma$ being a closed quantum subgroup of $\GPsi$
maps $(\omega\tensor\i)(\hat{\wW}_{\Psi})$ to $((\omega\circ L)\tensor\i)(\hat{W}_{\Gamma})$
for each $\omega\in\Lone(\hat{\G})$, where $\hat{\wW}_{\Psi}$ is
the `right half-universal' left regular representation of $\widehat{\GPsi}=\hat{\G}_{\Psi}$.
Hence (\ref{eq:G_Psi_co_unit}) implies that $\Pi_{\GPsi}$ factors
through the reduced \cst-algebra $\Cz(\GPsi)$ via the $*$-homomorphism
$\pi_{\Gamma}^{\Psi}\circ\Pi^{\Psi}\circ{(\pi^{\Psi})}^{-1}:\Cz(\GPsi)\to\Cz(\Gamma)$.
Hence the co-unit $\epsilon_{\GPsi}$ of $\GPsi$, which equals $\epsilon_{\Gamma}\circ\Pi_{\GPsi}$,
factors through $\Cz(\GPsi)$. This entails that $\GPsi$ is co-amenable
and $\Pi_{\GPsi}=\pi_{\Gamma}^{\Psi}\circ\Pi^{\Psi}\circ{(\pi^{\Psi})}^{-1}$.
\end{proof}
In the next proposition we lift the unitary antipode $\Rant=\Rant_{\G}$
to the level of the crossed product $\Cz(\G)\rtimes_{\rho}\Gamma^{2}$
and use it to describe the unitary antipode $\Rant_{\GPsi}$ of the
Rieffel deformation locally compact quantum group $\GPsi$. We need
the following general description of the anti-representations of crossed
products.
\begin{lem}[{\cite[p.~1229]{Dadarlat_Hirshberg_Phillips__simp_nu_C_alg_not_eq_isom_op},
\cite{Buss_Sims__opp_alg_groupoid_C_alg}}]
\label{lem:opp_crossed_prod}Let $G\overset{\a}{\curvearrowright}A$
be an action of a locally compact group $G$ on a \cst-algebra $A$.
Consider the opposite \cst-algebra $A^{\op}$ and the action $G\overset{\a^{\op}}{\curvearrowright}A^{\op}$
given by $\a_{t}^{\op}(a^{\op}):=\a_{t}(a)^{\op}$ for $a\in A$,
$t\in G$. Then there is a canonical isomorphism $\left(A\rtimes_{\a}G\right)^{\op}\cong A^{\op}\rtimes_{\a^{\op}}G$
that for $f\in\Cc(G,A,\a)$ maps $f^{\op}$ to the element of $\Cc(G,A^{\op},\a^{\op})$
given by $t\mapsto f^{*}(t)^{\op,*}=\Delta(t^{-1})\a_{t}(f(t^{-1}))^{\op}$,
$t\in G$ (where $f^{*}$ is the involution of $f$ in $\Cc(G,A,\a)\subseteq A\rtimes_{\a}G$).
Consequently, the anti-representations of $A\rtimes_{\a}G$ are in
1-1 correspondence with the anti-covariant representations of $(G,A,\a)$,
namely the pairs $(\Xi,\mathscr{U})$ such that $\Xi$ is an anti-representation
of $A$ on a Hilbert space $\H$, $\mathscr{U}$ is an (unitary) anti-representation
of $G$ on $\H$, and $\Xi(\a_{t^{-1}}(a))=\mathscr{U}_{t}\Xi(a)\mathscr{U}_{t}^{*}$
for all $a\in A$ and $t\in G$. The anti-representation of $A\rtimes_{\a}G$
associated with such $(\Xi,\mathscr{U})$ maps $f\in\Cc(G,A,\a)$
to $\int_{G}\mathscr{U}_{t}\,\Xi(f(t))\d t$. A similar result holds
for reduced crossed products.
\end{lem}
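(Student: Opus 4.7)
The plan is to work first on the $*$-algebra level: define a linear map $\Phi:\Cc(G,A)\to\Cc(G,A^{\op})$ by
$\Phi(f)(t):=\Delta(t^{-1})\a_{t}(f(t^{-1}))^{\op}$
and show that $\Phi$ is an anti-multiplicative, involution-preserving bijection, hence a $*$-isomorphism from the opposite of the algebraic crossed product to the algebraic crossed product of the opposite system. The analogous formula with the roles of $A$ and $A^{\op}$ interchanged manifestly furnishes an inverse, so bijectivity is free; the promotion to \cst-completions will be deduced from the universal property together with an explicit matching of covariant and anti-covariant representations.

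The algebraic core is the pair of identities $\Phi(f*g)=\Phi(g)*\Phi(f)$ and $\Phi(f^{*})=\Phi(f)^{*}$ for $f,g\in\Cc(G,A)$. Both reduce to straightforward bookkeeping once one substitutes the definitions of convolution and involution in the two crossed products, pushes the $(\cdot)^{\op}$'s through, and then applies the change of variable $u=t^{-1}s$ in a left-Haar integral. In that substitution the modular function appears precisely to absorb the Jacobian, which is exactly why the factor $\Delta(t^{-1})$ had to be inserted into the definition of $\Phi$; the identity $\a_{t}(a)^{\op}=\a_{t}^{\op}(a^{\op})$ is what ensures that the twist in $A^{\op}\rtimes_{\a^{\op}}G$ matches the twist in $A\rtimes_{\a}G$ after passage to the opposite. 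I expect this careful but routine computation, with its factors of $\Delta$ and inverted group elements, to be the main mechanical hurdle of the proof; no real new idea is involved.

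To promote $\Phi$ to a \cst-isomorphism $(A\rtimes_{\a}G)^{\op}\cong A^{\op}\rtimes_{\a^{\op}}G$ I would use the universal property of the full crossed product. Covariant representations $(\pi,U)$ of $(G,A^{\op},\a^{\op})$ on a Hilbert space $\H$ correspond bijectively to anti-covariant representations $(\Xi,\mathscr{U})$ of $(G,A,\a)$ in the sense of the statement, via $\Xi(a):=\pi(a^{\op})$ and $\mathscr{U}_{t}:=U_{t^{-1}}$: the facts that $\pi$ is a $*$-homomorphism of $A^{\op}$ and $U$ is a unitary representation of $G$ translate exactly into $\Xi$ being an anti-representation of $A$ and $\mathscr{U}$ being an anti-representation of $G$, while the covariance relation $U_{t}\pi(b)U_{t}^{*}=\pi(\a_{t}^{\op}(b))$ translates to $\Xi(\a_{t^{-1}}(a))=\mathscr{U}_{t}\Xi(a)\mathscr{U}_{t}^{*}$. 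Another instance of the change of variable used above gives $(\pi\times U)(\Phi(f))=\int_{G}\mathscr{U}_{t}\,\Xi(f(t))\,\d t$. Since the universal norm on $A\rtimes_{\a}G$ is the supremum over its covariant representations, equivalently (by taking adjoints) over its anti-covariant representations, $\Phi$ is isometric for the two full \cst-norms and extends to the desired \cst-isomorphism. The ``Consequently'' clause, including the integrated formula for the anti-representation of $A\rtimes_{\a}G$ associated with $(\Xi,\mathscr{U})$, is now immediate, and the reduced version follows by verifying that under this correspondence regular covariant representations of $(G,A,\a)$ match with regular covariant representations of $(G,A^{\op},\a^{\op})$.
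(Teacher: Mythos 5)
The paper does not actually prove this lemma: it is stated with attributions to Dadarlat--Hirshberg--Phillips and Buss--Sims and used as a black box, so there is no internal proof to compare against. Your argument is correct and is essentially the standard one from those references. The algebraic heart checks out: with $\Phi(f)(t)=\Delta(t^{-1})\a_{t}(f(t^{-1}))^{\op}$ one indeed gets $\Phi(f*g)=\Phi(g)*\Phi(f)$ (the substitution $s\mapsto t^{-1}r$ in a left Haar integral, with $\Delta(r^{-1})\Delta(t^{-1}r)=\Delta(t^{-1})$ absorbing the modular factors) and $\Phi(f^{*})=\Phi(f)^{*}$, and the formula with $A$ and $A^{\op}$ interchanged is visibly a two-sided inverse. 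The dictionary $\Xi(a)=\pi(a^{\op})$, $\mathscr{U}_{t}=U_{t^{-1}}$ between covariant representations of $(G,A^{\op},\a^{\op})$ and anti-covariant representations of $(G,A,\a)$ is right, as is the computation $(\pi\times U)(\Phi(f))=\int_{G}\mathscr{U}_{t}\,\Xi(f(t))\d t$. Two points deserve slightly more care than you give them. First, the parenthetical ``by taking adjoints'': the map $a\mapsto\Xi(a)^{*}$ is conjugate-linear, so it is not literally a covariant representation; the correct device is the transpose $T\mapsto T^{\mathrm{t}}$ acting on the conjugate Hilbert space, which is a $*$-anti-isomorphism of $B(\H)$ and converts covariant representations of $(G,A,\a)$ into anti-covariant ones with $\left\Vert\left(\int\pi'(f(t))U'_{t}\d t\right)^{\mathrm{t}}\right\Vert=\left\Vert\int\pi'(f(t))U'_{t}\d t\right\Vert$, which is exactly what the isometry of $\Phi$ for the two universal norms requires. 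Second, the reduced case is only asserted; one has to exhibit a unitary (a flip $t\mapsto t^{-1}$ on $\Ltwo(G)$ corrected by $\Delta^{1/2}$, tensored with a conjugation on the representation space of $A$) intertwining the regular representation of $(G,A^{\op},\a^{\op})$ with the transpose of the regular representation of $(G,A,\a)$. Neither issue is a gap in substance, only in the level of detail.
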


\begin{prop}
\label{prop:R_G_Psi}There is a unique anti-automorphism $\tilde{\Rant}$
of $\Cz(\G)\rtimes_{\rho}\Gamma^{2}$ mapping the element $\int_{\Gamma^{2}}\iota(f(\gamma_{1},\gamma_{2}))\cdot\lambda_{\gamma_{1},\gamma_{2}}\d(\gamma_{1},\gamma_{2})$
to $\int_{\Gamma^{2}}\lambda_{-\gamma_{2},-\gamma_{1}}\cdot\iota(\Rant_{\G}(f(\gamma_{1},\gamma_{2})))\d(\gamma_{1},\gamma_{2})$
for $f\in\Cc(\Gamma^{2},\Cz(\G))$. Extending it to $\M(\Cz(\G)\rtimes_{\rho}\Gamma^{2})$
we have $\tilde{\Rant}(\Cz(\G)^{\overline{\Psi}\boxtimes\dot{\Psi}})=\Cz(\G)^{\overline{\Psi}\boxtimes\dot{\Psi}}$
and $\tilde{\Rant}|_{\Cz(\G)^{\overline{\Psi}\boxtimes\dot{\Psi}}}={(\pi^{\Psi})}^{-1}\circ\Rant_{\GPsi}\circ\pi^{\Psi}$.
\end{prop}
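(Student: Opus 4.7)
My plan has three steps: construct $\tilde{\Rant}$ using Lemma~\ref{lem:opp_crossed_prod}, establish a pointwise compatibility identity between $\tilde{\Rant}$ and $\pi^{\mathrm{can}}$, and then read off both assertions of the proposition.

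First I would apply Lemma~\ref{lem:opp_crossed_prod} with $\Xi := \iota\circ \Rant_{\G}$, which is an anti-$*$-homomorphism from $\Cz(\G)$ into $\M(\Cz(\G)\rtimes_{\rho}\Gamma^{2})$, and with $\mathscr{U}_{\gamma_{1},\gamma_{2}} := \lambda_{-\gamma_{2},-\gamma_{1}}$, which is a group homomorphism and hence, since $\Gamma^{2}$ is abelian, equally an anti-representation. The anti-covariance condition reduces to the flip identity $\Rant_{\G}\circ\rho_{\gamma_{1},\gamma_{2}}=\rho_{\gamma_{2},\gamma_{1}}\circ \Rant_{\G}$, which is a direct computation from $\Rant_{\G}(\cdot)=\hat{J}(\cdot)^{*}\hat{J}$ combined with $\hat{J}L_{\gamma}\hat{J}=R_{\gamma}$ and its consequence $\hat{J}R_{\gamma}\hat{J}=L_{\gamma}$ (via $\hat{J}^{2}=\one$). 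Lemma~\ref{lem:opp_crossed_prod} then produces an anti-$*$-homomorphism $\tilde{\Rant}$ of $\Cz(\G)\rtimes_{\rho}\Gamma^{2}$ satisfying $\tilde{\Rant}(\iota(a))=\iota(\Rant_{\G}(a))$ and $\tilde{\Rant}(\lambda_{\gamma_{1},\gamma_{2}})=\lambda_{-\gamma_{2},-\gamma_{1}}$; bijectivity follows from $\tilde{\Rant}^{2}=\id$ on generators, and uniqueness from the universal property of the crossed product. The canonical strict extension to $\M(\Cz(\G)\rtimes_{\rho}\Gamma^{2})$ is then automatic.

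Next I would establish the key identity
\[
\pi^{\mathrm{can}}(\tilde{\Rant}(y)) \;=\; \hat{J}\,\pi^{\mathrm{can}}(y)^{*}\,\hat{J}\qquad (y\in\Cz(\G)\rtimes_{\rho}\Gamma^{2}).
\]
On a simple product $y=\iota(a)\lambda_{\gamma_{1},\gamma_{2}}$ the left-hand side equals $L_{-\gamma_{2}}R_{-\gamma_{1}}\hat{J}a^{*}\hat{J}$, while rewriting $\hat{J}(aL_{\gamma_{1}}R_{\gamma_{2}})^{*}\hat{J}$ via the same relations yields $(\hat{J}R_{-\gamma_{2}}\hat{J})(\hat{J}L_{-\gamma_{1}}\hat{J})(\hat{J}a^{*}\hat{J})=L_{-\gamma_{2}}R_{-\gamma_{1}}\hat{J}a^{*}\hat{J}$. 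The identity then propagates by linearity to norm-dense elements of the form $\int_{\Gamma^{2}}\iota(f(\gamma_{1},\gamma_{2}))\lambda_{\gamma_{1},\gamma_{2}}\,\dd(\gamma_{1},\gamma_{2})$ with $f\in\Cc(\Gamma^{2},\Cz(\G))$, to all of $\Cz(\G)\rtimes_{\rho}\Gamma^{2}$ by norm-continuity, and finally to $\M(\Cz(\G)\rtimes_{\rho}\Gamma^{2})$ by strict continuity of $\pi^{\mathrm{can}}$, of $\tilde{\Rant}$ on the unit ball, and of conjugation by $\hat{J}$.

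To conclude, I would combine this identity with (\ref{eq:R_G_sup_Omega}), which states $\Rant_{\GPsi}(x)=\hat{J}x^{*}\hat{J}$ for $x\in\Linfty(\GPsi)$. For $y\in\Cz(\G)^{\overline{\Psi}\boxtimes\dot{\Psi}}$ we have $\pi^{\mathrm{can}}(y)=\pi^{\Psi}(y)\in\Cz(\GPsi)$, so
\[
\pi^{\mathrm{can}}(\tilde{\Rant}(y)) \;=\; \hat{J}\,\pi^{\Psi}(y)^{*}\,\hat{J} \;=\; \Rant_{\GPsi}(\pi^{\Psi}(y)) \;\in\; \Cz(\GPsi) \;=\; \pi^{\Psi}(\Cz(\G)^{\overline{\Psi}\boxtimes\dot{\Psi}}).
\]
Faithfulness of $\pi^{\mathrm{can}}$ on $\Cz(\G)^{\overline{\Psi}\boxtimes\dot{\Psi}}$ from Theorem~\ref{thm:C0_G_Psi_crossed_prod} then forces $\tilde{\Rant}(y)\in\Cz(\G)^{\overline{\Psi}\boxtimes\dot{\Psi}}$ and $\tilde{\Rant}|_{\Cz(\G)^{\overline{\Psi}\boxtimes\dot{\Psi}}}=(\pi^{\Psi})^{-1}\circ \Rant_{\GPsi}\circ\pi^{\Psi}$, yielding both halves of the claim (equality of images follows from involutivity $\tilde{\Rant}^{2}=\id$). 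The only mildly delicate step is the strict-continuity extension of the pointwise identity to multipliers, but this is routine given the non-degeneracy of $\pi^{\mathrm{can}}$.
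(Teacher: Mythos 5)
Your construction of $\tilde{\Rant}$ via Lemma~\ref{lem:opp_crossed_prod} and your spatial identity $\pi^{\mathrm{can}}(\tilde{\Rant}(y))=\hat{J}\pi^{\mathrm{can}}(y)^{*}\hat{J}$ are both correct and agree with the paper's argument. The gap is in your final step: from $\pi^{\mathrm{can}}(\tilde{\Rant}(y))\in\Cz(\GPsi)=\pi^{\mathrm{can}}\big(\Cz(\G)^{\overline{\Psi}\boxtimes\dot{\Psi}}\big)$ you conclude $\tilde{\Rant}(y)\in\Cz(\G)^{\overline{\Psi}\boxtimes\dot{\Psi}}$ by invoking faithfulness of $\pi^{\mathrm{can}}$ \emph{on the Landstad algebra}. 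That inference would require injectivity of $\pi^{\mathrm{can}}$ on a set containing $\tilde{\Rant}(y)$, i.e.\ on (the relevant part of) $\M(\Cz(\G)\rtimes_{\rho}\Gamma^{2})$, and $\pi^{\mathrm{can}}$ is not injective there in general: already when $\Gamma\le\G$ factors through the centre (e.g.\ $\G=\Gamma$) one has $R_{\gamma}=L_{-\gamma}$, hence $\pi^{\mathrm{can}}(\lambda_{\gamma,\gamma})=L_{\gamma}R_{\gamma}=\one$ for all $\gamma\in\Gamma$, so $\pi^{\mathrm{can}}$ has a large kernel on the crossed product itself. Knowing that $\pi^{\mathrm{can}}(\tilde{\Rant}(y))$ has \emph{some} preimage in the Landstad algebra does not tell you that $\tilde{\Rant}(y)$ \emph{is} that preimage.

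What is missing is a direct proof that $\tilde{\Rant}$ preserves $\Cz(\G)^{\overline{\Psi}\boxtimes\dot{\Psi}}$, and this is precisely the extra work the paper's proof supplies before using the spatial identity. Writing $\beta$ for the homeomorphism $(\hat{\gamma}_{1},\hat{\gamma}_{2})\mapsto(-\hat{\gamma}_{2},-\hat{\gamma}_{1})$ of $\hat{\Gamma}^{2}$, one checks on generators that $\tilde{\Rant}\circ\hat{\rho}_{\hat{\gamma}_{1},\hat{\gamma}_{2}}=\hat{\rho}_{\beta(\hat{\gamma}_{1},\hat{\gamma}_{2})}\circ\tilde{\Rant}$ and $\tilde{\Rant}(\lambda(G))=\lambda(G\circ\beta)$ for $G\in\Cz(\hat{\Gamma}^{2})$; a short computation with the twisting unitaries $U_{\hat{\gamma}_{1},\hat{\gamma}_{2}}=\lambda\big((\overline{\Psi}\boxtimes\dot{\Psi})(\cdot,(\hat{\gamma}_{1},\hat{\gamma}_{2}))\big)$, using that $(\overline{\Psi}\boxtimes\dot{\Psi})(\beta(\cdot),\hat{\gamma})=\overline{(\overline{\Psi}\boxtimes\dot{\Psi})(\cdot,\beta(\hat{\gamma}))}$ and that $\tilde{\Rant}$ is an anti-homomorphism, then yields $\tilde{\Rant}\circ\hat{\rho}_{\hat{\gamma}_{1},\hat{\gamma}_{2}}^{\overline{\Psi}\boxtimes\dot{\Psi}}=\hat{\rho}_{\beta(\hat{\gamma}_{1},\hat{\gamma}_{2})}^{\overline{\Psi}\boxtimes\dot{\Psi}}\circ\tilde{\Rant}$. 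Hence $\tilde{\Rant}$ maps $\hat{\rho}^{\overline{\Psi}\boxtimes\dot{\Psi}}$-fixed points to fixed points, and it clearly preserves conditions (L2) and (L3), giving $\tilde{\Rant}(\Cz(\G)^{\overline{\Psi}\boxtimes\dot{\Psi}})\subseteq\Cz(\G)^{\overline{\Psi}\boxtimes\dot{\Psi}}$ and then equality by involutivity. Only after this inclusion is established does your spatial identity, combined with (\ref{eq:R_G_sup_Omega}) and the bijectivity of $\pi^{\Psi}$, legitimately give $\tilde{\Rant}|_{\Cz(\G)^{\overline{\Psi}\boxtimes\dot{\Psi}}}={(\pi^{\Psi})}^{-1}\circ\Rant_{\GPsi}\circ\pi^{\Psi}$.
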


\begin{proof}
The map $\Rant$ is an anti-automorphism of $\Cz(\G)$ and the map
$\mathscr{U}:\Gamma^{2}\ni(\gamma_{1},\gamma_{2})\mapsto\lambda_{-\gamma_{2},-\gamma_{1}}\in\M(\Cz(\G)\rtimes_{\rho}\Gamma^{2})$
is a (unitary anti-) representation of $\Gamma^{2}$ inside $\Cz(\G)\rtimes_{\rho}\Gamma^{2}$.
The pair $\left(\iota\circ\Rant,\mathscr{U}\right)$ is an anti-covariant
representation of the action $\Gamma^{2}\overset{\rho}{\curvearrowright}\Cz(\G)$
in the sense that 
\[
\mathscr{U}_{\gamma_{1},\gamma_{2}}\cdot(\iota\circ\Rant)(x)\cdot\mathscr{U}_{\gamma_{1},\gamma_{2}}^{-1}=(\iota\circ\Rant)\left(\rho_{-\gamma_{1},-\gamma_{2}}(x)\right)\qquad(\forall_{x\in\Cz(\G)}\forall_{(\gamma_{1},\gamma_{2})\in\Gamma^{2}}).
\]
As a result, the anti-automorphism $\tilde{\Rant}$ of $\Cz(\G)\rtimes_{\rho}\Gamma^{2}$
described in the proposition's statement exists (uniquely) by Lemma~\ref{lem:opp_crossed_prod}.
Evidently $\tilde{\Rant}^{2}=\tilde{\Rant}$. Furthermore, a simple
calculation shows that, writing $\beta$ for the homeomorphism of
$\hat{\Gamma}^{2}$ mapping $(\hat{\gamma}_{1},\hat{\gamma}_{2})$
to $(-\hat{\gamma}_{2},-\hat{\gamma}_{1})$, we have $\tilde{\Rant}\circ\hat{\rho}_{\hat{\gamma}_{1},\hat{\gamma}_{2}}=\hat{\rho}_{\beta(\hat{\gamma}_{1},\hat{\gamma}_{2})}\circ\tilde{\Rant}$
for all $\hat{\gamma}_{1},\hat{\gamma}_{2}\in\hat{\Gamma}$ and $\tilde{\Rant}(\lambda(G))=\lambda(G\circ\beta)$
for all $G\in\Cz(\hat{\Gamma}^{2})$, from which one shows that $\tilde{\Rant}\circ\hat{\rho}_{\hat{\gamma}_{1},\hat{\gamma}_{2}}^{\overline{\Psi}\boxtimes\dot{\Psi}}=\hat{\rho}_{\beta(\hat{\gamma}_{1},\hat{\gamma}_{2})}^{\overline{\Psi}\boxtimes\dot{\Psi}}\circ\tilde{\Rant}$.
Consequently $\tilde{\Rant}(\Cz(\G)^{\overline{\Psi}\boxtimes\dot{\Psi}})\subseteq\Cz(\G)^{\overline{\Psi}\boxtimes\dot{\Psi}}$,
thus $\tilde{\Rant}(\Cz(\G)^{\overline{\Psi}\boxtimes\dot{\Psi}})=\Cz(\G)^{\overline{\Psi}\boxtimes\dot{\Psi}}$.

The modular operator $\hat{J}$ of $\hat{\G}$ implements the unitary
antipode $\Rant$. Thus, for all $x\in\Cz(\G)$ we have $\left(\pi^{\mathrm{can}}\circ\iota\circ\Rant\right)(x)=\Rant(x)=\hat{J}x^{*}\hat{J}=\hat{J}\pi^{\mathrm{can}}(\iota(x))^{*}\hat{J}$.
On the other hand, for every $\gamma\in\Gamma$ we have $\hat{J}L_{\gamma}\hat{J}=R_{\gamma}$
by definition, thus $\pi^{\mathrm{can}}(\lambda_{-\gamma_{2},-\gamma_{1}})=L_{-\gamma_{2}}R_{-\gamma_{1}}=\hat{J}R_{-\gamma_{2}}L_{-\gamma_{1}}\hat{J}=\hat{J}\left(L_{\gamma_{1}}R_{\gamma_{2}}\right)^{*}\hat{J}=\hat{J}\pi^{\mathrm{can}}(\lambda_{\gamma_{1},\gamma_{2}})^{*}\hat{J}$
for all $\gamma_{1},\gamma_{2}\in\Gamma$. All this amounts to 
\begin{equation}
(\pi^{\mathrm{can}}\circ\tilde{\Rant})(y)=\hat{J}\pi^{\mathrm{can}}(y)^{*}\hat{J}\qquad(\forall_{y\in\Cz(\G)\rtimes_{\rho}\Gamma^{2}}).\label{eq:R_G_Psi}
\end{equation}
Furthermore, recall from (\ref{eq:R_G_sup_Omega}) that the modular
operator $\hat{J}$ of $\hat{\G}$ also implements the unitary antipode
$\Rant_{\GPsi}$ (where as always, $\Linfty(\GPsi)$ is acting on
$\Ltwo(\G)$). Since $\tilde{\Rant}(\Cz(\G)^{\overline{\Psi}\boxtimes\dot{\Psi}})=\Cz(\G)^{\overline{\Psi}\boxtimes\dot{\Psi}}$,
we infer from (\ref{eq:R_G_Psi}) that $\pi^{\Psi}\circ\tilde{\Rant}|_{\Cz(\G)^{\overline{\Psi}\boxtimes\dot{\Psi}}}=\Rant_{\GPsi}\circ\pi^{\Psi}$,
i.e.~$\tilde{\Rant}|_{\Cz(\G)^{\overline{\Psi}\boxtimes\dot{\Psi}}}={(\pi^{\Psi})}^{-1}\circ\Rant_{\GPsi}\circ\pi^{\Psi}$.
\end{proof}

\subsubsection{Returning to the original locally compact quantum group}

The natural identification of $(\GPsi)^{\overline{\Psi}}$ with $\G$
as locally compact quantum groups (see the beginning of Subsection~\ref{subsec:props_Rieffel_deform_LCQGs})
also holds from the side of Rieffel deformations of \cst-algebras:
essentially, deforming the deformed \cst-algebra $\Cz(\G)^{\overline{\Psi}\boxtimes\dot{\Psi}}$
again with respect to $\overline{\Psi}$ rather than $\Psi$ gives
back $\Cz(\G)$ when making the appropriate identifications. This
is stated precisely in Proposition~\ref{prop:from_G_Psi_to_G} below.

Since $\Gamma$ is a closed quantum subgroup of $\GPsi$, there is
an associated left-right translation action $\Gamma^{2}\overset{}{\curvearrowright}\Cz(\GPsi)$,
which we denote by $\varrho^{\Psi}$ in Propositions~\ref{prop:left_right_transl_act_G_Psi}
and \ref{prop:from_G_Psi_to_G} below, analogous to the left-right
translation action $\Gamma^{2}\overset{\rho}{\curvearrowright}\Cz(\G)$.
In the next simple proposition we spell out the relationship between
this action and the canonical `deformed' action $\Gamma^{2}\overset{\rho^{\overline{\Psi}\boxtimes\dot{\Psi}}}{\curvearrowright}\Cz(\G)^{\overline{\Psi}\boxtimes\dot{\Psi}}$
given by $\rho_{\gamma_{1},\gamma_{2}}^{\overline{\Psi}\boxtimes\dot{\Psi}}=\Ad(\lambda_{\gamma_{1},\gamma_{2}})|_{\Cz(\G)^{\overline{\Psi}\boxtimes\dot{\Psi}}}$
(see (\ref{eq:deformed_action})).
\begin{prop}
\label{prop:left_right_transl_act_G_Psi}The isomorphism $\pi^{\Psi}:\Cz(\G)^{\overline{\Psi}\boxtimes\dot{\Psi}}\to\Cz(\GPsi)$
intertwines the left-right translation action $\Gamma^{2}\overset{\varrho^{\Psi}}{\curvearrowright}\Cz(\GPsi)$
and the `deformed' action $\Gamma^{2}\overset{\rho^{\overline{\Psi}\boxtimes\dot{\Psi}}}{\curvearrowright}\Cz(\G)^{\overline{\Psi}\boxtimes\dot{\Psi}}$,
i.e.~we have $\varrho_{\gamma_{1},\gamma_{2}}^{\Psi}=\pi^{\Psi}\circ\rho_{\gamma_{1},\gamma_{2}}^{\overline{\Psi}\boxtimes\dot{\Psi}}\circ{(\pi^{\Psi})}^{-1}$
for every $(\gamma_{1},\gamma_{2})\in\Gamma^{2}$.
\end{prop}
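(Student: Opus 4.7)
The plan is to chase through the definitions to see that both actions are implemented on $\Ltwo(\G)$ by the same family of unitaries $\{L_{\gamma_1}R_{\gamma_2}\}_{(\gamma_1,\gamma_2)\in\Gamma^2}$, and then the assertion follows from the fact that $\pi^{\Psi}$ is the restriction of $\pi^{\mathrm{can}}$, which sends $\lambda_{\gamma_1,\gamma_2}$ precisely to $L_{\gamma_1}R_{\gamma_2}$.

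First, I would observe that the closed quantum subgroup inclusion $\Gamma\le\GPsi$ of \eqref{eq:Gamma_le_G_Psi} is implemented by the same normal $*$-homomorphism $L:\Linfty(\hat{\Gamma})\to\Linfty(\hat{\G}_\Psi)=\Linfty(\hat{\G})$, so the associated ``left'' representation of $\Gamma$ inside $\hat{\G}_\Psi$ is again $(L_\gamma)_{\gamma\in\Gamma}$. Next, by the remark in Subsection~\ref{subsec:cocycle_twisting} that the modular conjugation $\hat{J}_\Psi$ of the left Haar weight of $\hat{\G}_\Psi$ coincides with $\hat{J}$, the associated ``right'' representation $R^\Psi_\gamma:=\hat{J}_\Psi L_\gamma\hat{J}_\Psi$ equals $\hat{J}L_\gamma\hat{J}=R_\gamma$. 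Consequently, the left-right translation action $\varrho^\Psi$ of $\Gamma^2$ on $\Cz(\GPsi)\subseteq B(\Ltwo(\G))$ is given by $\varrho^\Psi_{\gamma_1,\gamma_2}=\Ad(L_{\gamma_1}R_{\gamma_2})|_{\Cz(\GPsi)}$, exactly as for the undeformed $\G$.

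To conclude, let $x\in\Cz(\G)^{\overline{\Psi}\boxtimes\dot{\Psi}}$ and $(\gamma_1,\gamma_2)\in\Gamma^2$. By definition \eqref{eq:deformed_action} of the deformed action, $\rho^{\overline{\Psi}\boxtimes\dot{\Psi}}_{\gamma_1,\gamma_2}(x)=\lambda_{\gamma_1,\gamma_2}\,x\,\lambda_{\gamma_1,\gamma_2}^*$ lies in $\Cz(\G)^{\overline{\Psi}\boxtimes\dot{\Psi}}$, so applying $\pi^\Psi=\pi^{\mathrm{can}}|_{\Cz(\G)^{\overline{\Psi}\boxtimes\dot{\Psi}}}$ and recalling that $\pi^{\mathrm{can}}(\lambda_{\gamma_1,\gamma_2})=L_{\gamma_1}R_{\gamma_2}$ yields
\[
\pi^\Psi\bigl(\rho^{\overline{\Psi}\boxtimes\dot{\Psi}}_{\gamma_1,\gamma_2}(x)\bigr)=L_{\gamma_1}R_{\gamma_2}\,\pi^\Psi(x)\,(L_{\gamma_1}R_{\gamma_2})^*=\varrho^\Psi_{\gamma_1,\gamma_2}\bigl(\pi^\Psi(x)\bigr).
\]
There is no real obstacle here; the only thing to verify with some care is the identification $R^\Psi_\gamma=R_\gamma$, which is a direct consequence of $\hat{J}_\Psi=\hat{J}$ together with the invariance of $L$ under passing from $\hat{\G}$ to $\hat{\G}_\Psi$.
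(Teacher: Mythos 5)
Your argument is correct and follows essentially the same route as the paper's proof: identify that the subgroup embedding $L$ and the modular conjugation $\hat{J}$ are unchanged under the deformation, deduce $R^{\Psi}_{\gamma}=R_{\gamma}$ so that $\varrho^{\Psi}$ is implemented by the unitaries $L_{\gamma_{1}}R_{\gamma_{2}}$, and then conclude via $\pi^{\mathrm{can}}(\lambda_{\gamma_{1},\gamma_{2}})=L_{\gamma_{1}}R_{\gamma_{2}}$. The only difference is that you spell out the final conjugation computation explicitly, which the paper leaves implicit.
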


\begin{proof}
By the foregoing our identification of $\Gamma$ as a closed quantum
subgroup of $\GPsi$ is such that the $*$-homomorphism $L:\Linfty(\hat{\Gamma})\to\Linfty(\hat{\G})$
equals the analogous $*$-homomorphism $\Linfty(\hat{\Gamma})\to\Linfty(\hat{\G}_{\Psi}=\widehat{\GPsi})$.
Furthermore, since $\hat{J}=J_{\hat{\G}}$ equals $J_{\hat{\G}_{\Psi}=\widehat{\GPsi}}$
(see Subsection~\ref{subsec:cocycle_twisting}, where the latter
conjugation is denoted by $\hat{J}_{\Omega}$) and since $R_{\gamma}=\hat{J}L_{\gamma}\hat{J}$
for all $\gamma\in\Gamma$, also the $*$-homomorphism $R:\Linfty(\hat{\Gamma})\to\Linfty(\hat{\G})'$
equals the analogous $*$-homomorphism $\Linfty(\hat{\Gamma})\to\Linfty(\hat{\G}_{\Psi}=\widehat{\GPsi})'$.
Consequently, viewing $\Cz(\GPsi)$ as acting on $\Ltwo(\G)$, the
action $\Gamma^{2}\overset{\varrho^{\Psi}}{\curvearrowright}\Cz(\GPsi)$
is implemented, like the action $\Gamma^{2}\overset{\rho}{\curvearrowright}\Cz(\G)$,
by the unitaries $L_{\gamma_{1}}R_{\gamma_{2}}$, $(\gamma_{1},\gamma_{2})\in\Gamma^{2}$.
By the definition of $\pi^{\mathrm{can}}$ and its restriction $\pi^{\Psi}$
(just before and after Theorem~\ref{thm:C0_G_Psi_crossed_prod})
we deduce that $\varrho_{\gamma_{1},\gamma_{2}}^{\Psi}=\pi^{\Psi}\circ\rho_{\gamma_{1},\gamma_{2}}^{\overline{\Psi}\boxtimes\dot{\Psi}}\circ{(\pi^{\Psi})}^{-1}$
for all $(\gamma_{1},\gamma_{2})\in\Gamma^{2}$.
\end{proof}
As preparation for Proposition~\ref{prop:from_G_Psi_to_G}, note
that by Proposition~\ref{prop:left_right_transl_act_G_Psi} the $*$-isomorphism
$\pi^{\Psi}:\Cz(\G)^{\overline{\Psi}\boxtimes\dot{\Psi}}\to\Cz(\GPsi)$
induces a $*$-isomorphism 
\[
\widetilde{\pi^{\Psi}}:\Cz(\G)^{\overline{\Psi}\boxtimes\dot{\Psi}}\rtimes_{\rho^{\overline{\Psi}\boxtimes\dot{\Psi}}}\Gamma^{2}\to\Cz(\GPsi)\rtimes_{\varrho^{\Psi}}\Gamma^{2}.
\]
We identify $\Cz(\G)^{\overline{\Psi}\boxtimes\dot{\Psi}}\rtimes_{\rho^{\overline{\Psi}\boxtimes\dot{\Psi}}}\Gamma^{2}$
with $\Cz(\G)\rtimes_{\rho}\Gamma^{2}$, see the description of Landstad's
theorem in Subsection~\ref{subsec:Rieffel_deform_algebras}. We also
identify $(\GPsi)^{\overline{\Psi}}$ with $\G$ and thereby view
$\Cz((\GPsi)^{\overline{\Psi}})$ as equal to $\Cz(\G)$, both acting
on $\Ltwo(\G)$. Recall that $\iota:\Cz(\G)\to\M(\Cz(\G)\rtimes_{\rho}\Gamma^{2})$
is the canonical embedding.
\begin{prop}
\label{prop:from_G_Psi_to_G}Apply the constructions of Subsection~\ref{subsec:Rieffel_deform_LCQGs}
with $\GPsi$ and $\overline{\Psi}$ in place of $\G$ and $\Psi$,
viewing $\Gamma\le\GPsi$ as described above. Upon the above-mentioned
identification of $\Cz(\G)^{\overline{\Psi}\boxtimes\dot{\Psi}}\rtimes_{\rho^{\overline{\Psi}\boxtimes\dot{\Psi}}}\Gamma^{2}$
with $\Cz(\G)\rtimes_{\rho}\Gamma^{2}$, the \cst-algebra $\Cz(\GPsi)^{\Psi\boxtimes\dot{\overline{\Psi}}}\subseteq\M(\Cz(\GPsi)\rtimes_{\varrho^{\Psi}}\Gamma^{2})$
is mapped by ${(\widetilde{\pi^{\Psi}})}^{-1}$ to $\iota(\Cz(\G))$,
and the $*$-isomorphism $\Cz(\GPsi)^{\Psi\boxtimes\dot{\overline{\Psi}}}\to\Cz((\GPsi)^{\overline{\Psi}}\cong\G)$
analogous to $\pi^{\Psi}:\Cz(\G)^{\overline{\Psi}\boxtimes\dot{\Psi}}\to\Cz(\GPsi)$
is $\pi^{\mathrm{can}}\circ{(\widetilde{\pi^{\Psi}})}^{-1}|_{\Cz(\GPsi)^{\Psi\boxtimes\dot{\overline{\Psi}}}}=\iota^{-1}\circ{(\widetilde{\pi^{\Psi}})}^{-1}|_{\Cz(\GPsi)^{\Psi\boxtimes\dot{\overline{\Psi}}}}$.
\end{prop}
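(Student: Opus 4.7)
The plan is to verify both assertions by a slicewise computation that exploits the cancellation $\overline{\Psi}\cdot\Psi=\one$ between the two successive applications of the formula~\eqref{eq:cal_W_hat_Psi}.

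First I would identify the ingredients of the second deformation with those of the first. By (the proof of) Proposition~\ref{prop:left_right_transl_act_G_Psi}, the embeddings $L,R:\Linfty(\hat{\Gamma})\to B(\Ltwo(\G))$ and the modular conjugation $\hat{J}$ for $\Gamma\le\GPsi$ coincide with those for $\Gamma\le\G$. Proposition~\ref{prop:left_right_transl_act_G_Psi} itself yields that $\pi^{\Psi}$ intertwines $\rho^{\overline{\Psi}\boxtimes\dot{\Psi}}$ with $\varrho^{\Psi}$, so under the Landstad identification of the source of $\widetilde{\pi^{\Psi}}$ with $\Cz(\G)\rtimes_{\rho}\Gamma^{2}$, the inverse $\widetilde{\pi^{\Psi}}^{-1}$ sends $\lambda^{L,\Psi},\lambda^{R,\Psi}$ to $\lambda^{L},\lambda^{R}$ and $\iota^{\Psi}\circ\pi^{\Psi}$ to the tacit inclusion $\Cz(\G)^{\overline{\Psi}\boxtimes\dot{\Psi}}\hookrightarrow\M(\Cz(\G)\rtimes_{\rho}\Gamma^{2})$. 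Moreover, twisting $\hat{\G}_{\Psi}$ by the cocycle $(L\otimes L)(\overline{\Psi})$ gives comultiplication $\Ad((L\otimes L)(\overline{\Psi}\cdot\Psi))\circ\hat{\Delta}=\hat{\Delta}$, so $(\GPsi)^{\overline{\Psi}}=\G$ canonically and the analogue of $\hat{W}_{\Psi}$ in the second deformation is simply $\hat{W}$.

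Next I would carry out the slicewise computation. The second-deformation analogue of~\eqref{eq:cal_W_hat_Psi} reads
\begin{equation*}
\hat{\mathcal{W}}_{\overline{\Psi},\GPsi}=(L\otimes\lambda^{R,\Psi})(\mathring{\overline{\Psi}})\cdot(\id\otimes\iota^{\Psi})(\hat{W}_{\Psi})\cdot(L\otimes\lambda^{L,\Psi})(\Psi),
\end{equation*}
and by the analogue of Theorem~\ref{thm:C0_G_Psi_crossed_prod} its slices span a dense subset of $\Cz(\GPsi)^{\Psi\boxtimes\dot{\overline{\Psi}}}$. Fix $\omega\in\Lone(\hat{\G})=\Lone(\hat{\G}_{\Psi})$ and apply $\widetilde{\pi^{\Psi}}^{-1}$ to $(\omega\otimes\id)(\hat{\mathcal{W}}_{\overline{\Psi},\GPsi})$ factor by factor: the outer factors transform through $\lambda^{L/R,\Psi}\mapsto\lambda^{L/R}$, while the slice of the middle factor equals $\widetilde{\pi^{\Psi}}^{-1}(\iota^{\Psi}((\omega\otimes\id)(\hat{W}_{\Psi})))=(\omega\otimes\id)(\hat{\mathcal{W}}_{\Psi})$ by Theorem~\ref{thm:C0_G_Psi_crossed_prod} combined with the defining property of $\widetilde{\pi^{\Psi}}$. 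Substituting the formula~\eqref{eq:cal_W_hat_Psi} for $\hat{\mathcal{W}}_{\Psi}$ and invoking the scalar-valued identities $\mathring{\overline{\Psi}}\cdot\mathring{\Psi}=\one=\overline{\Psi}\cdot\Psi$ (as $\Psi$ is $\mathbb{T}$-valued), the wrapping factors collapse and one obtains
\begin{equation*}
\widetilde{\pi^{\Psi}}^{-1}\bigl((\omega\otimes\id)(\hat{\mathcal{W}}_{\overline{\Psi},\GPsi})\bigr)=(\omega\otimes\id)\bigl((\id\otimes\iota)(\hat{W})\bigr)=\iota((\omega\otimes\id)(\hat{W})).
\end{equation*}
Since $\{(\omega\otimes\id)(\hat{W}):\omega\in\Lone(\hat{\G})\}$ is norm-dense in $\Cz(\G)$ and $\iota$ is isometric, passing to norm closures yields $\widetilde{\pi^{\Psi}}^{-1}(\Cz(\GPsi)^{\Psi\boxtimes\dot{\overline{\Psi}}})=\iota(\Cz(\G))$, proving the first assertion.

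The second assertion then follows immediately: since $\pi^{\mathrm{can}}|_{\iota(\Cz(\G))}=\iota^{-1}$, the map $\iota^{-1}\circ\widetilde{\pi^{\Psi}}^{-1}|_{\Cz(\GPsi)^{\Psi\boxtimes\dot{\overline{\Psi}}}}$ sends $(\omega\otimes\id)(\hat{\mathcal{W}}_{\overline{\Psi},\GPsi})$ to $(\omega\otimes\id)(\hat{W})$ and by density is therefore the canonical isomorphism onto $\Cz((\GPsi)^{\overline{\Psi}}\cong\G)$ characterised by the analogue of Theorem~\ref{thm:C0_G_Psi_crossed_prod}. The main technical obstacle will be the product-of-three-unitaries slice calculation, complicated by the warning in the remark after Theorem~\ref{thm:C0_G_Psi_crossed_prod} that $\hat{W}_{\Psi}$ and $\hat{\mathcal{W}}_{\Psi}$ live a priori in different multiplier algebras (with first legs in $\Cz(\hat{\G}_{\Psi})$ vs.~$\Cz(\hat{\G})$); this is sidestepped by working slicewise throughout and using that $\Lone(\hat{\G})=\Lone(\hat{\G}_{\Psi})$ as preduals of the same von Neumann algebra.
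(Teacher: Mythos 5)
Your proof is correct in substance but takes a genuinely different route from the paper's. The paper disposes of the first assertion in one line by a purely \cst-dynamical fact: by \cite[Lemma~3.5 and its proof]{Kasprzak__Rieffel_deform_crossed_prod}, the doubly deformed dual action $(\hat{\rho}^{\overline{\Psi}\boxtimes\dot{\Psi}})^{\Psi\boxtimes\dot{\overline{\Psi}}}$ on $\Cz(\G)\rtimes_{\rho}\Gamma^{2}$ coincides with the original dual action $\hat{\rho}$ (the two weak $1$-cocycles of unitaries $U_{\hat{\gamma}}$ built from $\overline{\Psi}\boxtimes\dot{\Psi}$ and $\Psi\boxtimes\dot{\overline{\Psi}}$ are mutually inverse), so the second Landstad algebra is exactly $\iota(\Cz(\G))$; no quantum group structure is needed for this. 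The second assertion is then read off from the fact that the canonical representation of $\Cz(\GPsi)\rtimes_{\varrho^{\Psi}}\Gamma^{2}$ on $\Ltwo(\G)$ equals $\pi^{\mathrm{can}}\circ{(\widetilde{\pi^{\Psi}})}^{-1}$. Your slicewise computation with the generators $(\omega\tensor\i)(\hat{\mathcal{W}}_{\overline{\Psi},\GPsi})$ instead performs the cocycle cancellation at the level of the unitaries $(L\tensor\lambda^{R})(\mathring{\Psi})$ and $(L\tensor\lambda^{L})(\overline{\Psi})$; it is longer, but it identifies the images of the generators explicitly, which makes the second assertion immediate and dovetails with the characterisation of the analogue of $\pi^{\Psi}$ in Theorem~\ref{thm:C0_G_Psi_crossed_prod}. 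One step needs more care than you give it: for ``the wrapping factors collapse'' you must know that $(\i\tensor{(\widetilde{\pi^{\Psi}})}^{-1})$ applied to the middle factor $(\i\tensor\iota^{\Psi})(\hat{W}_{\Psi})$ \emph{equals} $\hat{\mathcal{W}}_{\Psi}$ inside $\M\big(\Cz(\hat{\G})\tensormin(\Cz(\G)\rtimes_{\rho}\Gamma^{2})\big)$, not merely that the two elements have the same $\Lone(\hat{\G})$-slices, since slicewise agreement of the middle factors does not by itself yield slicewise agreement of the triple products. This is repairable --- after a faithful non-degenerate representation of $\Cz(\G)\rtimes_{\rho}\Gamma^{2}$ on a Hilbert space $H$ both elements lie in $\Linfty(\hat{\G})\tensorn B(H)$, where equality of all $\Lone(\hat{\G})$-slices does force equality; alternatively one can expand the slice of the product over an orthonormal basis as in the proof of Theorem~\ref{thm:Psi_comult_crossed_prod} --- but as written, ``working slicewise throughout'' does not actually sidestep the multiplier-algebra subtlety flagged in the remark after Theorem~\ref{thm:C0_G_Psi_crossed_prod}, and this point deserves an explicit argument.
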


\begin{proof}
By \cite[Lemma~3.5 and its proof]{Kasprzak__Rieffel_deform_crossed_prod},
we have $\big(\Cz(\G)^{\overline{\Psi}\boxtimes\dot{\Psi}}\big)^{\Psi\boxtimes\dot{\overline{\Psi}}}=\iota(\Cz(\G))$
inside $\M(\Cz(\G)\rtimes_{\rho}\Gamma^{2})$ because the `doubly
deformed dual' action $(\hat{\rho}^{\overline{\Psi}\boxtimes\dot{\Psi}})^{\Psi\boxtimes\dot{\overline{\Psi}}}$
and the original dual action $\hat{\rho}$, both on $\Cz(\G)\rtimes_{\rho}\Gamma^{2}$,
are equal. This proves the first assertion.

The second assertion holds since the representation of $\Cz(\GPsi)\rtimes_{\varrho^{\Psi}}\Gamma^{2}$
on $\Ltwo(\G)$ analogous to the representation $\pi^{\mathrm{can}}$
of $\Cz(\G)\rtimes_{\rho}\Gamma^{2}$ on $\Ltwo(\G)$ equals $\pi^{\mathrm{can}}\circ{(\widetilde{\pi^{\Psi}})}^{-1}$.
This is true because of the definitions of these representations and
of $\widetilde{\pi^{\Psi}}$ and the fact that $\pi^{\Psi}=\pi^{\mathrm{can}}|_{\Cz(\G)^{\overline{\Psi}\boxtimes\dot{\Psi}}}$.
\end{proof}

\section{Convolution semigroups on Rieffel deformations\label{sec:convsemig}}

This final section applies the results established earlier in the
context of convolution semigroups of states on locally compact quantum
groups, leading to the main results of the paper.

\subsection{\label{subsec:conv_semig_basics}Convolution semigroup basics}

In this general subsection we introduce convolutions, convolution
operators and convolution semigroups and make several observations
on them.

Let $\G$ be a locally compact quantum group. The \emph{convolution}
of $\mu,\nu\in\CzU(\G)^{*}$ is $\mu\star\nu:=(\mu\tensor\nu)\circ\Delta^{\mathrm{u}}\in\CzU(\G)^{*}$.
The pair $\left(\CzU(\G)^{*},\star\right)$ is a (completely contractive
Banach) algebra with the co-unit $\epsilon$ as its unit. Using tacitly
the natural embeddings of Banach spaces $\Lone(\G)\hookrightarrow\Cz(\G)^{*}\hookrightarrow\CzU(\G)^{*}$
given by restriction and by composing with the quotient map, respectively,
each `smaller' algebra is an ideal in each `larger' algebra.

For $\mu\in\CzU(\G)^{*}$ the operators $\omega\mapsto\omega\star\mu$
and $\omega\mapsto\mu\star\omega$ on $\Lone(\G)$ are completely
bounded, and we denote by $\mathscr{L}_{\mu},\mathscr{R}_{\mu}\in B(\Linfty(\G))$,
respectively, their (completely bounded, normal) adjoints (we follow
the left/right convention of \cite{Runde__unif_cont_LCQG,Skalski_Viselter__convolution_semigroups}
extending the $L^{1}(\QG)$-bimodule actions on $L^{\infty}(\QG)$
rather than the opposite convention of, e.g., \cite{Junge_Neufang_Ruan__rep_thm_LCQG,Salmi_Skalski__idemp_states_LCQG}).
The linear maps $\mathscr{L}_{\mu},\mathscr{R}_{\mu}$ on $L^{\infty}(\QG)$
reduce to maps on $\Cz(\QG)$ \cite[Lemma~1.3]{Runde_Viselter_LCQGs_PosDef},
and for convenience, in this paper we will ordinarily write $\mathscr{L}_{\mu},\mathscr{R}_{\mu}$
for the maps on $\Cz(\QG)$. This should not lead to any confusion.
We call $\mathscr{L}_{\mu}$ and $\mathscr{R}_{\mu}$, $\mu\in\CzU(\G)^{*}$,
the left and right\emph{ convolution operators} on $\G$, respectively.

The operators $\mathscr{L}_{\mu},\mathscr{R}_{\mu}$ admit more concrete
formulas. For instance, if $\Ww\in\M(\CzU(\G)\tensormin K(\Ltwo(\G)))$
is the `left half-universal' left regular representation and
\[
\Delta^{\text{u-r}}\in\Mor(\Cz(\QG),\CzU(\G)\tensormin\Cz(\QG))
\]
is the `left half-universal' co-multiplication given by $\Delta^{\text{u-r}}(x):=\Ww^{*}(\one_{\CzU(\G)}\tensor x)\Ww$,
$x\in\Cz(\QG)$ \cite[Proposition~6.2~(2)]{Kustermans__LCQG_universal},
then $\mathscr{R}_{\mu}=(\mu\tensor\id)\circ\Delta^{\text{u-r}}$.
Remark that this formula extends to the original convolution operator
on $\Linfty(\G)$: for every $x\in\Linfty(\G)$ we have $\mathscr{R}_{\mu}x=(\mu\tensor\id)\left(\Ww^{*}(\one_{\CzU(\G)}\tensor x)\Ww\right)$,
where the right-hand side is \emph{a priori} in $\M(K(\Ltwo(\G)))$;
see \cite[Theorem~6]{Salmi_Skalski__idemp_states_LCQG_II} (based
on facts from \cite{Daws__CPM_LCQGs_2012,Junge_Neufang_Ruan__rep_thm_LCQG}
gathered earlier in \cite{Daws_Fima_Skalski_White_Haagerup_LCQG}).

\begin{defn}[\cite{Lindsay_Skalski__conv_semigrp_states,Skalski_Viselter__convolution_semigroups}]
A \emph{convolution semigroup} of states on $\G$ (or of $\CzU(\G)$)
is a one-parameter family $\left(\mu_{t}\right)_{t\ge0}$ of states
of $\CzU(\G)$ that forms a convolutive semigroup, i.e.
\[
\mu_{0}=\epsilon\text{ and }\mu_{s}\star\mu_{t}=\mu_{s+t}\text{ for all }s,t\ge0.
\]
Such $\left(\mu_{t}\right)_{t\ge0}$ is called $w^{*}$-continuous
if it is $w^{*}$-continuous in $t$ (at $0^{+}$ or, equivalently,
everywhere), and it is called symmetric if $\mu_{t}$ is invariant
under the universal unitary antipode $\Rant^{\mathrm{u}}$ for all
$t\ge0$.
\end{defn}

Convolution semigroups were studied in depth in \cite{Lindsay_Skalski__conv_semigrp_states,Skalski_Viselter__convolution_semigroups,Skalski_Viselter__generating_functionals}.
In particular, they were shown to be in $1$-$1$ correspondences
with other classes of semigroups \cite[Theorems~3.2 and 3.4]{Skalski_Viselter__convolution_semigroups}.
Since we will work with the `$\Cz(\QG)$-picture' of the convolution
semigroups, we would like to add the following complement of these
correspondences. Note that we only request the commutation relation
with `reduced' convolution operators.
\begin{thm}
\label{thm:SkV_Thm_3_2_C_0} Let $\QG$ be a locally compact quantum
group. 
\begin{enumerate}
\item \label{enu:SkV_Thm_3_2_C_0__1}There is a 1-1 correspondence between: 
\begin{itemize}
\item states $\mu$ of $\CzU(\QG)$;
\item completely positive maps $T$ on $\Cz(\QG)$ that have norm $1$ and
commute with the operators $\mathscr{L}_{\nu}$, $\nu\in\Cz(\QG)^{*}$;
equivalently, that are non-degenerate and satisfy the following commutation
relation: 
\begin{equation}
(T\ot\id)\circ\Delta_{\Cz(\QG)}=\Delta_{\Cz(\QG)}\circ T.\label{eq:right_conv_oper_comm_rel}
\end{equation}
\end{itemize}
The correspondence is given by $T=\mathscr{R}_{\mu}$.
\item \label{enu:SkV_Thm_3_2_C_0__2}There is a 1-1 correspondence between:
\begin{itemize}
\item $w^{*}$-continuous convolution semigroups $\left(\mu_{t}\right)_{t\ge0}$
of states of $\CzU(\QG)$;
\item $C_{0}$-semigroups $\left(T_{t}\right)_{t\ge0}$ of completely positive
maps on $\Cz(\QG)$ that have norm $1$ and commute with the operators
$\mathscr{L}_{\nu}$, $\nu\in\Cz(\QG)^{*}$; equivalently, that are
non-degenerate and satisfy the commutation relation (\ref{eq:right_conv_oper_comm_rel}).
\end{itemize}
The correspondence is given by $T_{t}=\mathscr{R}_{\mu_{t}}$ for
all $t\ge0$. 
\end{enumerate}
Similar results hold for left convolution operators.

\end{thm}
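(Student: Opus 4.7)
The plan is to derive Theorem~\ref{thm:SkV_Thm_3_2_C_0} by combining a direct verification of the forward direction with an application of \cite[Theorems~3.2 and 3.4]{Skalski_Viselter__convolution_semigroups}, which give the analogous correspondences at the $\Linfty(\G)$-level. For the forward direction of \eqref{enu:SkV_Thm_3_2_C_0__1}, given a state $\mu$ of $\CzU(\G)$, the map $\mathscr{R}_\mu = (\mu \ot \id) \circ \Delta^{\text{u-r}}$ is completely positive and contractive on $\Cz(\G)$ by \cite[Lemma~1.3]{Runde_Viselter_LCQGs_PosDef}, and the co-associativity identity $(\id \ot \Delta_{\Cz(\G)}) \circ \Delta^{\text{u-r}} = (\Delta^{\text{u-r}} \ot \id) \circ \Delta_{\Cz(\G)}$ yields after slicing by $\mu$ the desired commutation relation
\[
(\mathscr{R}_\mu \ot \id) \circ \Delta_{\Cz(\G)} = \Delta_{\Cz(\G)} \circ \mathscr{R}_\mu.
\]
For a general CP contraction $T$ on $\Cz(\G)$, the identity $\mathscr{L}_\nu(Tx) - T(\mathscr{L}_\nu x) = (\id \ot \nu)\bigl(\Delta(Tx) - (T \ot \id)\Delta(x)\bigr)$ together with the faithfulness of slicing by $\nu\in\Cz(\G)^*$ shows that commutation with every $\mathscr{L}_\nu$ is equivalent to this commutation relation, so the two algebraic conditions in \eqref{enu:SkV_Thm_3_2_C_0__1} are interchangeable.

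The delicate point is the equivalence between the normalisations "norm $1$" and "non-degenerate" in \eqref{enu:SkV_Thm_3_2_C_0__1}. One implication is immediate since a non-degenerate CP map has a unital strict extension and hence norm one. For the converse, let $T$ be a CP map of norm $1$ on $\Cz(\G)$ satisfying the commutation relation, and extend $T$ strictly on the unit ball to a CP contraction $\tilde T : \M(\Cz(\G)) \to \M(\Cz(\G))$ as in \cite[Corollary~5.7]{Lance}. Both $(\tilde T \ot \id)\circ\Delta_{\Cz(\G)}$ and $\Delta_{\Cz(\G)}\circ\tilde T$ are strictly continuous on the unit ball and agree on the strictly dense subset $\Cz(\G)$, so they coincide on $\M(\Cz(\G))$. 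Evaluating at $\one$ and using $\Delta(\one) = \one\ot\one$, the positive element $a := \tilde T\one$ of norm at most $1$ satisfies $\Delta a = a \ot \one$; the standard fact that the only elements of $\Linfty(\G)$ fixed by $\Delta$ in this sense are scalars forces $a = c\one$ with $c\in[0,1]$, and then the norm identity $\|T\| = \|a\|$ for CP maps gives $c=1$, i.e.~$T$ is non-degenerate.

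For the backward direction of \eqref{enu:SkV_Thm_3_2_C_0__1}, extend the now-non-degenerate $T$ further to a normal unital CP map $\tilde T$ on $\Linfty(\G)$ (first strictly to $\M(\Cz(\G))$ as above, then normally to $\Linfty(\G)$ by standard $\Cz(\G)^{**}$-techniques), with the commutation relation surviving the extension by $\sigma$-weak continuity of $\Delta$. Now \cite[Theorem~3.2]{Skalski_Viselter__convolution_semigroups} supplies a unique state $\mu$ of $\CzU(\G)$ with $\tilde T = \mathscr{R}_\mu$ on $\Linfty(\G)$; restricting recovers $T = \mathscr{R}_\mu$ on $\Cz(\G)$, while injectivity of $\mu\mapsto\mathscr{R}_\mu|_{\Cz(\G)}$ follows from the density of right slices of $\Ww$ inside $\CzU(\G)$. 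Part \eqref{enu:SkV_Thm_3_2_C_0__2} is obtained by applying \eqref{enu:SkV_Thm_3_2_C_0__1} pointwise in $t$ and translating continuity: the identity $\omega(\mathscr{R}_{\mu_t}x) = \mu_t\bigl((\id \ot \omega)\Delta^{\text{u-r}}(x)\bigr)$ combined with an approximation argument and \cite[Theorem~3.4]{Skalski_Viselter__convolution_semigroups} shows that the $C_0$-property of $(\mathscr{R}_{\mu_t})_{t\ge0}$ on $\Cz(\G)$ is equivalent to $w^*$-continuity of $(\mu_t)_{t\ge 0}$. The main obstacle is the automatic non-degeneracy argument of the second paragraph; once this is settled, the remaining passages between the $\Cz$-, $\Linfty$- and $\CzU$-pictures are essentially bookkeeping on top of the results of \cite{Skalski_Viselter__convolution_semigroups}.
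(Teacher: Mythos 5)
Your overall strategy --- verify the forward direction by hand and then descend from the $\Linfty(\G)$-level results of \cite{Skalski_Viselter__convolution_semigroups} --- differs from the paper, which instead invokes \cite[Theorem~2.1~(e)]{Skalski_Viselter__convolution_semigroups}: that result already works at the reduced \cst-level and identifies the completely positive maps on $\Cz(\G)$ commuting with the operators $\mathscr{L}_{\omega}$, $\omega\in\Lone(\G)$, as precisely $\{\mathscr{R}_{\mu}:\mu\in\CzU(\G)_{+}^{*}\}$, with $\mu\mapsto\mathscr{R}_{\mu}$ injective and norm-preserving; all the equivalences in part (1) then follow \emph{a posteriori}, and part (2) reduces to \cite[Theorem~4.6]{Runde_Viselter_LCQGs_PosDef}. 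Your route has a genuine gap at its central step: you extend an abstract non-degenerate completely positive $T$ on $\Cz(\G)$ to a \emph{normal} completely positive map on $\Linfty(\G)$ ``by standard $\Cz(\G)^{**}$-techniques''. No such technique exists: a bounded (even completely positive and non-degenerate) map on $\Cz(\G)$ in general admits no normal extension to $\Linfty(\G)$ --- already a state of $\Cz(\G)$ need not be normal on $\Linfty(\G)$. That \emph{covariant} completely positive maps on $\Cz(\G)$ do extend normally is exactly the content of the representation theorems of Daws and Junge--Neufang--Ruan underlying \cite[Theorem~2.1]{Skalski_Viselter__convolution_semigroups}; it is the hard part of the statement, not bookkeeping on top of it.

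A second, smaller issue concerns the normalisations. To form $(T\ot\id)\circ\Delta_{\Cz(\G)}$, whose argument lives in $\M(\Cz(\G)\tensormin\Cz(\G))$, and to apply \cite[Corollary~5.7]{Lance}, you need $T$ to be \emph{strict}; for a map merely assumed to have norm $1$ and commute with the operators $\mathscr{L}_{\nu}$, strictness is not available a priori, so both your identity relating the two algebraic conditions and your extension $\tilde{T}$ to $\M(\Cz(\G))$ are circular at that point. (The paper resolves this a posteriori: once $T=\mathscr{R}_{\mu}=(\mu\ot\id)\circ\Delta^{\text{u-r}}$ is known, strictness is automatic.) The remaining ingredients of your argument --- ergodicity of the coproduct forcing $\tilde{T}\one\in\C\one$, and the continuity transfer in part (2) --- are correct in spirit but would also need the references you omit; the latter is essentially Lemma~\ref{lem:convolution_oper_converg} together with \cite[Theorem~4.6]{Runde_Viselter_LCQGs_PosDef}.
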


\begin{proof}
\ref{enu:SkV_Thm_3_2_C_0__1}. By \cite[Theorem~2.1~(e)]{Skalski_Viselter__convolution_semigroups},
the set of all completely positive maps $T$ on $\Cz(\QG)$ that commute
with the operators $\mathscr{L}_{\omega}$, $\omega\in L^{1}(\QG)$,
equals $\{\mathscr{R}_{\mu}:\mu\in\CzU(\QG)_{+}^{*}\}$, and the map
$\CzU(\QG)_{+}^{*}\ni\mu\mapsto\mathscr{R}_{\mu}$ is 1-1 and norm
preserving. Thus, for a completely positive $T$, the commutation
relation is \emph{a posteriori} equivalent to commuting with the operators
$\mathscr{L}_{\nu}$, $\nu\in\Cz(\QG)^{*}$, and also to $T$ being
strict and satisfying the commutation relation (\ref{eq:right_conv_oper_comm_rel}).
Evidently, such $T$ is non-degenerate if and only if the associated
$\mu$ is a state if and only if $\left\Vert T\right\Vert =1$. 

\ref{enu:SkV_Thm_3_2_C_0__2}. The map $\mu\mapsto\mathscr{R}_{\mu}$
is an anti-homomorphism, so the proof is complete using \cite[Theorem~4.6]{Runde_Viselter_LCQGs_PosDef}.
\end{proof}
The following simple lemma is merely a slight extension of \cite[Lemma~2.3, (a)$\iff$(c)]{Skalski_Viselter__convolution_semigroups}.
\begin{lem}
\label{lem:convolution_oper_converg}Let $\left(\mu_{i}\right)_{i\in\netdirset I}$
be a net in $\CzU(\G)^{*}$ and $\mu\in\CzU(\G)^{*}$. 
\begin{enumerate}
\item \label{enu:convolution_oper_converg__1}If $\left(\mu_{i}\right)_{i\in\netdirset I}$
converges to $\mu$ in the $w^{*}$-topology of $\CzU(\G)^{*}$, then
$\left(\mathscr{R}_{\mu_{i}}\right)_{i\in\netdirset I}$ converges
to $\mathscr{R}_{\mu}$ in the point--weak topology, i.e.~$\nu(\mathscr{R}_{\mu_{i}}(x))\xrightarrow[i\in\netdirset I]{}\nu(\mathscr{R}_{\mu}(x))$
for all $x\in\Cz(\G)$, $\nu\in\Cz(\G)^{*}$.
\item \label{enu:convolution_oper_converg__2}Suppose that $\left(\mu_{i}\right)_{i\in\netdirset I}$
is bounded and that $\nu(\mathscr{R}_{\mu_{i}}(x))\xrightarrow[i\in\netdirset I]{}\nu(\mathscr{R}_{\mu}(x))$
for all $x\in\Cz(\G)$ and all $\nu$ in a subspace $\mathbb{O}\subseteq\Cz(\G)^{*}$,
where \textup{$\mathbb{O}$} is such that if $\rho\in\CzU(\QG)^{*}$
and $\rho\star\nu=0$ for all $\nu\in\mathbb{O}$ then $\rho=0$.
Then $\mu_{i}\xrightarrow[i\in\netdirset I]{}\mu$ in the $w^{*}$-topology.
The condition on $\mathbb{O}$ is satisfied if $\mathbb{O}$ contains
$\left\{ \nu_{0}x:x\in\Cz(\G)\right\} \subseteq\Cz(\G)^{*}$ for some
non-zero $\nu_{0}\in\Cz(\G)^{*}$.
\end{enumerate}
\end{lem}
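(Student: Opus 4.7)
My plan is to derive (1) by rewriting $\nu(\mathscr{R}_{\mu}(x))$ as an evaluation of $\mu$ at a fixed element of $\CzU(\G)$, to deduce (2) from (1) via a Banach--Alaoglu cluster-point argument, and finally to verify the sufficient condition on $\mathbb{O}$ using a Podle\'{s}-style density for the half-universal co-multiplication $\Delta^{\text{u-r}}$. Concretely, for (1) I fix $x \in \Cz(\G)$ and $\nu \in \Cz(\G)^{*}$ and use the recalled formula $\mathscr{R}_{\mu} = (\mu \tensor \id) \circ \Delta^{\text{u-r}}$ to obtain
\[
\nu(\mathscr{R}_{\mu}(x)) \;=\; \mu\bigl((\id \tensor \nu)(\Delta^{\text{u-r}}(x))\bigr).
\]
Once one checks that the slice $y_{\nu,x} := (\id \tensor \nu)(\Delta^{\text{u-r}}(x))$ lies in $\CzU(\G)$ (rather than merely in its multiplier algebra), the $w^{*}$-convergence $\mu_{i} \to \mu$ immediately gives $\nu(\mathscr{R}_{\mu_{i}}(x)) \to \nu(\mathscr{R}_{\mu}(x))$ for every such $x$ and $\nu$, which is (1).

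For (2), the boundedness of $(\mu_{i})_{i \in \netdirset I}$ together with Banach--Alaoglu reduces the claim to showing that $\mu$ is the only $w^{*}$-cluster point of the net. If $\mu_{i_{j}} \xrightarrow{w^{*}} \mu'$ along a subnet, then part~(1) applied along this subnet, combined with the hypothesis, yields $\nu(\mathscr{R}_{\mu'}(x)) = \nu(\mathscr{R}_{\mu}(x))$ for every $\nu \in \mathbb{O}$ and $x \in \Cz(\G)$, which I rephrase as $(\mu' - \mu) \star \nu = 0$ in $\Cz(\G)^{*}$ for every $\nu \in \mathbb{O}$. The separation hypothesis on $\mathbb{O}$ then forces $\mu' - \mu = 0$, so the whole net converges to $\mu$.

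For the final assertion, suppose $\rho \in \CzU(\G)^{*}$ satisfies $\rho \star (\nu_{0} x) = 0$ for every $x \in \Cz(\G)$. Unfolding the convolution and the definition $(\nu_{0} x)(z) = \nu_{0}(xz)$ recasts this as
\[
(\rho \tensor \nu_{0})\bigl((1 \tensor x)\Delta^{\text{u-r}}(y)\bigr) \;=\; 0 \qquad (x, y \in \Cz(\G)).
\]
A Podle\'{s}-style density argument then shows $\clinspan\{(1 \tensor x)\Delta^{\text{u-r}}(y) : x, y \in \Cz(\G)\} = \CzU(\G) \tensormin \Cz(\G)$, so $\rho \tensor \nu_{0}$ vanishes on all of $\CzU(\G) \tensormin \Cz(\G)$; since the slice map $\id \tensor \nu_{0}$ surjects onto $\CzU(\G)$ whenever $\nu_{0} \neq 0$, this forces $\rho = 0$. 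The main technical obstacle I anticipate is justifying the two density/continuity statements concerning $\Delta^{\text{u-r}}$ used above --- that the relevant slices land in $\CzU(\G)$, and the Podle\'{s}-type density for $\Delta^{\text{u-r}}$ --- which are half-universal analogues of standard reduced-level facts and should follow from the usual Podle\'{s} conditions via bounded strict-topology arguments and the reducing morphism $\CzU(\G) \to \Cz(\G)$.
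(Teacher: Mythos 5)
Your proof is correct and follows essentially the same route as the paper's: the key identity $\nu(\mathscr{R}_{\mu}(x))=\mu\bigl((\id\tensor\nu)(\Delta^{\text{u-r}}(x))\bigr)$ together with the half-universal cancellation law $\clinspan\Delta^{\text{u-r}}(\Cz(\G))(\one\tensor\Cz(\G))=\CzU(\G)\tensormin\Cz(\G)$ (obtained, as you anticipate, by applying the reducing morphism $\CzU(\G)\to\Cz(\G)$ to the right leg of the universal cancellation law) carries all three assertions. The only cosmetic difference is in part (2), where the paper concludes directly from the fact that a bounded net converging pointwise on a norm-total subset of $\CzU(\G)$ converges $w^{*}$ (totality of $\{(\id\tensor\nu)(\Delta^{\text{u-r}}(x)):x\in\Cz(\G),\nu\in\mathbb{O}\}$ being supplied by Hahn--Banach and the hypothesis on $\mathbb{O}$), rather than via Banach--Alaoglu and uniqueness of $w^{*}$-cluster points.
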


\begin{proof}
Recall the cancellation law $\clinspan\Delta^{\mathrm{u}}(\CzU(\G))(\one\tensor\CzU(\G))=\CzU(\G)\tensormin\CzU(\G)$,
which, by applying the canonical surjection $\CzU(\G)\to\Cz(\G)$
to the right tensor leg, implies the cancellation law $\clinspan\Delta^{\text{u-r}}(\Cz(\G))(\one\tensor\Cz(\G))=\CzU(\G)\tensormin\Cz(\G)$.

For all $\mu\in\CzU(\G)^{*}$, $\nu\in\Cz(\G)^{*}$, and $x\in\Cz(\G)$
we have $\nu(\mathscr{R}_{\mu}(x))=\mu\big((\id\tensor\nu)(\Delta^{\text{u-r}}(x))\big)$
and $(\id\tensor\nu)(\Delta^{\text{u-r}}(x))$ belongs to $\CzU(\G)$
(rather than just to $\M(\CzU(\G))$) by virtue of the cancellation
law. Thus \ref{enu:convolution_oper_converg__1} follows.

\ref{enu:convolution_oper_converg__2}. By the previous paragraph,
the limit assumption amounts to having $\mu_{i}(y)\xrightarrow[i\in\netdirset I]{}\mu(y)$
for all $y\in\left\{ (\id\tensor\nu)(\Delta^{\text{u-r}}(x)):x\in\Cz(\G),\nu\in\mathbb{O}\right\} \subseteq\CzU(\G)$.
The latter set is total in $\CzU(\G)$ by the Hahn--Banach theorem
and the assumption on $\mathbb{O}$ because a functional $\rho\in\CzU(\QG)^{*}$
annihilates that set if and only if $\rho\star\nu=0$ for all $\nu\in\mathbb{O}$.
So $\mu_{i}\xrightarrow[i\in\netdirset I]{}\mu$ in the $w^{*}$-topology
as $\left(\mu_{i}\right)_{i\in\netdirset I}$ is bounded. The last
statement of part~\ref{enu:convolution_oper_converg__2} follows
from the cancellation law.
\end{proof}
The conditions on $\mathbb{O}$ in part~\ref{enu:convolution_oper_converg__2}
hold if the maps $\left(\mathscr{R}_{\mu_{i}}\right)_{i\in\netdirset I},\mathscr{R}_{\mu}$
extend nicely to maps on a bigger \emph{von Neumann} algebra; this
is exploited in the next lemma.
\begin{lem}
\label{lem:convolution_oper_converg_by_ext}Let $\left(\mu_{i}\right)_{i\in\netdirset I}$
be a bounded net in $\CzU(\G)^{*}$ and $\mu\in\CzU(\G)^{*}$. Suppose
that there exist a von Neumann algebra $M$, a \cst-subalgebra $A\subseteq M$
with a $*$-isomorphism $\pi:A\to\Cz(\G)$, and maps $\left(\overline{\mathscr{R}_{\mu_{i}}}\right)_{i\in\netdirset I},\overline{\mathscr{R}_{\mu}}$
on $M$ that extend $\left(\mathscr{R}_{\mu_{i}}\right)_{i\in\netdirset I},\mathscr{R}_{\mu}$
via $\pi$, e.g.~$\overline{\mathscr{R}_{\mu}}|_{A}=\pi^{-1}\circ\mathscr{R}_{\mu}\circ\pi$,
such that $\overline{\mathscr{R}_{\mu_{i}}}\xrightarrow[i\in\netdirset I]{}\overline{\mathscr{R}_{\mu}}$
point--ultraweakly. Then $\mu_{i}\xrightarrow[i\in\netdirset I]{}\mu$
in the $w^{*}$-topology.
\end{lem}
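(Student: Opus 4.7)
The plan is to deduce the assertion from Lemma~\ref{lem:convolution_oper_converg}\ref{enu:convolution_oper_converg__2} by transferring the ultraweak convergence on $M$ into pointwise convergence against a rich enough family of functionals in $\Cz(\G)^*$.

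First I would define
\[
\mathbb{O}:=\{\omega\circ\pi^{-1}:\omega\in M_{*}\}\subseteq\Cz(\G)^{*},
\]
which is clearly a linear subspace. For any $\nu=\omega\circ\pi^{-1}\in\mathbb{O}$ and $x\in\Cz(\G)$, the key computation reads
\[
\nu(\mathscr{R}_{\mu_{i}}(x))=\omega\bigl(\pi^{-1}(\mathscr{R}_{\mu_{i}}(x))\bigr)=\omega\bigl(\overline{\mathscr{R}_{\mu_{i}}}(\pi^{-1}(x))\bigr),
\]
and the same with $\mu$ replacing $\mu_{i}$. Since the net $\bigl(\overline{\mathscr{R}_{\mu_{i}}}\bigr)_{i\in\netdirset I}$ converges to $\overline{\mathscr{R}_{\mu}}$ point--ultraweakly on $M$, pairing against $\omega\in M_{*}$ gives $\nu(\mathscr{R}_{\mu_{i}}(x))\to\nu(\mathscr{R}_{\mu}(x))$ for every $\nu\in\mathbb{O}$ and every $x\in\Cz(\G)$.

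To apply Lemma~\ref{lem:convolution_oper_converg}\ref{enu:convolution_oper_converg__2}, it remains to verify the density condition on $\mathbb{O}$; per the final sentence of that lemma, it suffices to exhibit a non-zero $\nu_{0}\in\Cz(\G)^{*}$ with $\nu_{0}x\in\mathbb{O}$ for every $x\in\Cz(\G)$. I would first pick any non-zero positive element $a\in A\cong\Cz(\G)$ and choose a normal state $\omega_{0}\in M_{*}$ with $\omega_{0}(a)>0$ (such a state exists because normal states separate positive elements of $M$), and set $\nu_{0}:=\omega_{0}\circ\pi^{-1}\in\Cz(\G)^{*}\setminus\{0\}$. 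Then for $x\in\Cz(\G)$, the functional $\nu_{0}x$ acts by
\[
(\nu_{0}x)(y)=\omega_{0}\bigl(\pi^{-1}(x)\pi^{-1}(y)\bigr)=(\omega_{0}\cdot\pi^{-1}(x))(\pi^{-1}(y)),\qquad y\in\Cz(\G),
\]
so $\nu_{0}x=(\omega_{0}\cdot\pi^{-1}(x))\circ\pi^{-1}$, and since $M_{*}$ is an $M$-bimodule we have $\omega_{0}\cdot\pi^{-1}(x)\in M_{*}$. Hence $\nu_{0}x\in\mathbb{O}$, as required.

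The straightforward part is the computation above; the only point that needs a moment of care is the existence of the separating $\nu_{0}$, which is handled by the standard separation of $M_{+}$ by normal states. Combining the two ingredients, Lemma~\ref{lem:convolution_oper_converg}\ref{enu:convolution_oper_converg__2} yields $\mu_{i}\to\mu$ in the $w^{*}$-topology of $\CzU(\G)^{*}$, completing the proof.
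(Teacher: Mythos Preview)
Your proof is correct and follows essentially the same route as the paper: both define $\mathbb{O}=\{\omega|_{A}\circ\pi^{-1}:\omega\in M_{*}\}$, verify pointwise convergence against $\mathbb{O}$ from the point--ultraweak hypothesis, and then check the sufficient condition of Lemma~\ref{lem:convolution_oper_converg}\ref{enu:convolution_oper_converg__2} via the $M$-bimodule structure of $M_{*}$. The only cosmetic difference is that the paper invokes the identity $M_{*}=\{\omega y:\omega\in M_{*},\,y\in M\}$ to argue that $\mathbb{O}$ is non-zero and absorbs right multiplication by $\Cz(\G)$, while you pick an explicit non-zero $\nu_{0}$ using separation by normal states; the substance is the same.
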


\begin{proof}
Since the predual $M_{*}$ equals $\left\{ \omega y:\omega\in M_{*},y\in M\right\} $,
the (non-zero) subspace $\mathbb{O}:=\big\{\omega|_{A}\circ\pi^{-1}:\omega\in M_{*}\big\}$
of $\Cz(\G)^{*}$ contains $\left\{ \nu x:\nu\in\mathbb{O},x\in\Cz(\G)\right\} $.
Surely $\nu(\mathscr{R}_{\mu_{i}}(x))\xrightarrow[i\in\netdirset I]{}\nu(\mathscr{R}_{\mu}(x))$
for all $x\in\Cz(\G)$ and $\nu\in\mathbb{O}$. All conditions of
Lemma~\ref{lem:convolution_oper_converg}~\ref{enu:convolution_oper_converg__2}
are thus satisfied.
\end{proof}

\subsection{Completely positive maps on Rieffel deformations}

We state in this subsection a general result, Proposition~\ref{prop:single map},
on the functoriality of the Rieffel deformation involving non-degenerate
completely positive maps. For $*$-homomorphisms this was shown in
\cite[Subsection~3.2]{Kasprzak__Rieffel_deform_crossed_prod}. See
also \cite{Meyer_Roy_Woronowicz__quant_grp_tw_tens_prod_C_alg}. We
use the setting and notation of Subsection~\ref{subsec:Rieffel_deform_algebras},
assuming that we are given an action $\Gamma\overset{\rho}{\curvearrowright}A$
of a locally compact abelian group $\Gamma$ on a \cst-algebra $A$
and a continuous $2$-cocycle $\Psi$ on $\hat{\Gamma}$.
\begin{prop}
\label{prop:single map}Suppose that $S:A\to A$ is a non-degenerate
completely positive map commuting with the action $\Gamma\overset{\rho}{\curvearrowright}A$
in the obvious sense: $\rho_{\gamma}\circ S=S\circ\rho_{\gamma}$
for each $\gamma\in\Gamma$. Then the canonical completely positive
`extension' $\tilde{S}$ of $S$ to the crossed product $A\rtimes_{\rho}\Gamma$
is non-degenerate and preserves the algebra $A^{\Psi}$, and the respective
restriction $S^{\Psi}:A^{\Psi}\to A^{\Psi}$ is a  completely positive
map of norm $1$. The construction respects composition: if $T:A\to A$
is another map satisfying the assumptions above then we have $(S\circ T)^{\Psi}=S^{\Psi}\circ T^{\Psi}$.
\end{prop}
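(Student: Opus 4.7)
The plan is to construct $\tilde S$ using the universal property of crossed products for covariant completely positive maps. Since $S$ is $\rho$-equivariant, the pair $(\iota\circ S,\lambda)$ forms a covariant pair in the CP sense for $(A,\Gamma,\rho)$ into $\M(A\rtimes_{\rho}\Gamma)$; on the dense $*$-subalgebra $\Cc(\Gamma,A)\subseteq A\rtimes_{\rho}\Gamma$ one defines $\tilde S(f)(\gamma):=S(f(\gamma))$, and a standard argument yields a unique CP map $\tilde S:A\rtimes_{\rho}\Gamma\to A\rtimes_{\rho}\Gamma$ satisfying $\tilde S(\iota(a)\lambda_\gamma)=\iota(S(a))\lambda_\gamma$ for all $a\in A$ and $\gamma\in\Gamma$. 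For non-degeneracy, I would apply Lemma~\ref{lem:nondeg_CP_Banach_alg_approx_id} to a contractive Banach algebraic approximate identity of the form $(\iota(e_i)\lambda(f_j))$, where $(e_i)$ is an approximate identity of $A$ and $(f_j)\subseteq\Cc(\hat\Gamma)$ is such that $\lambda(f_j)\to\one$ strictly. The image under $\tilde S$ is $\iota(S(e_i))\lambda(f_j)$, which converges strictly to $\one$ in $\M(A\rtimes_\rho\Gamma)$ because non-degeneracy of $S$ forces $\iota(S(e_i))\to\one$ strictly.

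The key observation for showing $\tilde S$ preserves $A^\Psi$ is that each unitary $\lambda_\gamma$ lies in the multiplicative domain of the (unital) strict extension of $\tilde S$ to $\M(A\rtimes_\rho\Gamma)$: indeed $\tilde S(\lambda_\gamma)^*\tilde S(\lambda_\gamma)=\lambda_\gamma^*\lambda_\gamma=\one=\tilde S(\lambda_\gamma^*\lambda_\gamma)$, and similarly on the other side. Since the multiplicative domain is a $\CStar$-subalgebra that is strictly closed on bounded subsets, it contains all of $\lambda(\Cb(\hat\Gamma))$, in particular the unitaries $U_{\hat\gamma}=\lambda(\Psi_{\hat\gamma})$. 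It follows that $\tilde S$ commutes with $\Ad(U_{\hat\gamma}^*)$. It also commutes with $\hat\rho_{\hat\gamma}$: on generators $\iota(a)\lambda_\gamma$ this is immediate, as $\tilde S$ preserves $\iota(A)$ (where $\hat\rho_{\hat\gamma}$ is trivial) and fixes each $\lambda_\gamma$ (on which $\hat\rho_{\hat\gamma}$ acts by the scalar $\langle\hat\gamma,\gamma\rangle$). Combining the two gives commutation with the deformed dual action $\hat\rho^\Psi_{\hat\gamma}=\Ad(U_{\hat\gamma}^*)\circ\hat\rho_{\hat\gamma}$. Verifying the Landstad conditions for $\tilde S(x)$, $x\in A^\Psi$, is then immediate: (L1) follows from the commutation; (L2) from $\lambda_\gamma\tilde S(x)\lambda_\gamma^*=\tilde S(\lambda_\gamma x\lambda_\gamma^*)$ and continuity of the latter in $\gamma$; and (L3) from $\lambda(f)\tilde S(x)\lambda(g)=\tilde S(\lambda(f)x\lambda(g))\in\tilde S(A\rtimes_\rho\Gamma)\subseteq A\rtimes_\rho\Gamma$.

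For the norm, the strict extension of $\tilde S$ to $\M(A\rtimes_\rho\Gamma)$ maps $\one$ to $\one$, and $\one\in A^\Psi$, so the induced extension of $S^\Psi$ to $\M(A^\Psi)$ is unital; hence $\|S^\Psi\|=1$. Composition is respected by a uniqueness argument: if $T$ also satisfies the hypotheses then so does $S\circ T$, and $\tilde S\circ\tilde T$ is a non-degenerate CP map on $A\rtimes_\rho\Gamma$ that agrees with $\widetilde{S\circ T}$ on every $\iota(a)\lambda_\gamma$, hence the two coincide; restricting to $A^\Psi$ yields $(S\circ T)^\Psi=S^\Psi\circ T^\Psi$. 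The step I expect to require the most care is the initial construction of $\tilde S$ as a genuinely CP contraction on the full crossed product (not merely on a dense subalgebra). This is folklore but warrants explicit justification, either by exhibiting a Stinespring dilation of $S$ that intertwines $\rho$ with a unitary representation of $\Gamma$ on the dilation space and then forming the crossed product at the dilation level, or by adapting the treatment of $*$-homomorphisms in \cite[Subsection~3.2]{Kasprzak__Rieffel_deform_crossed_prod}.
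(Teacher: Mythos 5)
Your construction of $\tilde{S}$, the non-degeneracy argument via Lemma~\ref{lem:nondeg_CP_Banach_alg_approx_id}, and the multiplicative-domain argument showing that $\tilde{S}$ preserves $A^{\Psi}$ all match the paper's proof in substance (the paper obtains $\tilde{S}$ from \cite[Theorem~4.9 and Corollary~4.18]{Buss_Echterhoff_Willett__exotic_cr_prod_BC_conj} and puts the $U_{\hat{\gamma}}=\lambda(\Psi_{\hat{\gamma}})$ directly into the multiplicative domain, whereas you first handle the $\lambda_{\gamma}$ and pass to $\lambda(\Cb(\hat{\Gamma}))$ by strict closure on bounded sets -- a harmless variation). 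The composition claim is also fine.

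There is, however, a genuine gap in your norm argument. You assert that $\one\in A^{\Psi}$; this is false in general. When $A$ is non-unital the Landstad condition (L3) fails for $\one$, since $\lambda(f)\lambda(g)=\lambda(fg)$ lies in $\M(A\rtimes_{\rho}\Gamma)$ but not in $A\rtimes_{\rho}\Gamma$; concretely, in the application of this proposition $A^{\Psi}\cong\Cz(\GPsi)$ is non-unital whenever $\GPsi$ is non-compact. Moreover, your phrase ``the induced extension of $S^{\Psi}$ to $\M(A^{\Psi})$ is unital'' presupposes that $S^{\Psi}$ admits a strict extension to $\M(A^{\Psi})$, i.e.~that $S^{\Psi}$ is strict/non-degenerate as a map on $A^{\Psi}$ -- but this is precisely the open Question~\ref{ques:non_deg_deformed_CP_map}, which the paper explicitly does not resolve. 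The correct route to $\left\Vert S^{\Psi}\right\Vert \ge1$ (the upper bound being clear from contractivity of $\tilde{S}$) uses only the strict topology of the ambient algebra: take an approximate identity $\left(x_{i}\right)_{i}$ of $A^{\Psi}$; since the inclusion $A^{\Psi}\subseteq\M(A\rtimes_{\rho}\Gamma)$ is non-degenerate, $x_{i}\to\one_{\M(A\rtimes_{\rho}\Gamma)}$ strictly, and the non-degeneracy of $\tilde{S}$ (already established) then gives $S^{\Psi}x_{i}=\tilde{S}x_{i}\to\one_{\M(A\rtimes_{\rho}\Gamma)}$ strictly, forcing $\left\Vert S^{\Psi}\right\Vert \ge1$.
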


\begin{proof}
Note first that whenever $S$ is completely positive and equivariant
with respect to the $\Gamma$-action, it is well-known that it `extends'
to a completely positive map on the crossed product $A\rtimes_{\rho}\Gamma$,
which we denote by $\tilde{S}$, and which is determined by the formula
\begin{equation}
(\tilde{S}(f))(\gamma)=S(f(\gamma))\qquad(\forall_{f\in\Cc(\Gamma,A),\gamma\in\Gamma})\label{eq:Sformula}
\end{equation}
(this follows for example from \cite[Theorem~4.9 and Corollary~4.18]{Buss_Echterhoff_Willett__exotic_cr_prod_BC_conj}).
Note that the proofs in \cite{Buss_Echterhoff_Willett__exotic_cr_prod_BC_conj}
show that the norm of $\tilde{S}$ equals to that of $S$, essentially
as one can always work in the trivial unitisation of $A$ and suitably
rescale to work with the unital case.

We assume henceforth that the completely positive map $S:A\to A$
is non-degenerate and show that the completely positive map $\tilde{S}:A\rtimes_{\rho}\Gamma\to A\rtimes_{\rho}\Gamma$
is also non-degenerate.\emph{ }Consider the following canonical
contractive Banach algebraic approximate identity of $A\rtimes_{\rho}\Gamma$
\cite[footnote to Remark~A.8 (2)]{Echterhoff_Kaliszewski_Quigg_Raeburn__cat_appr_impr_thm_C_dyn_sys}.
Let $\left(a_{i}\right)_{i\in\netdirset I}$ and $\left(z_{j}\right)_{j\in\netdirset J}$
be approximate identities of $A$ and $\CStar(\Gamma)$, respectively.
Then $\left(\iota(a_{i})\lambda(z_{j})\right)_{(i,j)\in\netdirset I\times\netdirset J}$
is a contractive Banach algebraic approximate identity of $A\rtimes_{\rho}\Gamma$.
Since $S$ is non-degenerate, $Sa_{i}\xrightarrow[i\in\netdirset I]{}\one_{\M(A)}$
strictly in $\M(A)$. As a result, $\tilde{S}(\iota(a_{i})\lambda(z_{j}))=\iota(Sa_{i})\lambda(z_{j})\xrightarrow[(i,j)\in\netdirset I\times\netdirset J]{}\one_{\M(A\rtimes_{\rho}\Gamma)}$
strictly in $\M(A\rtimes_{\rho}\Gamma)$. Hence, $\tilde{S}$ is non-degenerate
by Lemma~\ref{lem:nondeg_CP_Banach_alg_approx_id}.

\emph{}Note that as $U_{\hat{\gamma}}=\lambda(\Psi_{\hat{\gamma}})$
for each $\hat{\gamma}\in\hat{\Gamma}$, we have $\tilde{S}(U_{\hat{\gamma}})=U_{\hat{\gamma}}$
by the defining formula (\ref{eq:Sformula}), so that $U_{\hat{\gamma}}$
is in the multiplicative domain of $\tilde{S}$. On the other hand
(\ref{eq:Sformula}) also shows that $\tilde{S}$ commutes with the
dual action $\hat{\rho}$. Thus $\tilde{S}$ commutes with the `deformed
dual' action $\hat{\rho}^{\Psi}$, and so it preserves its fixed-point
space. The multiplicative domain argument as in the first line of
this paragraph shows that 
\begin{gather*}
\tilde{S}(\lambda(f)x\lambda(g))=\lambda(f)\tilde{S}(x)\lambda(g)\qquad(\forall_{x\in\M(A\rtimes_{\rho}\Gamma),f,g\in\Cz(\hat{\Gamma})}),\\
\tilde{S}(\lambda_{\gamma}x\lambda_{\gamma}^{*})=\lambda_{\gamma}\tilde{S}(x)\lambda_{\gamma}^{*}\qquad(\forall_{x\in\M(A\rtimes_{\rho}\Gamma),\gamma\in\Gamma}).
\end{gather*}
This shows that $\tilde{S}$ preserves $A^{\Psi}$. Set $S^{\Psi}:=\tilde{S}|_{A^{\Psi}}:A^{\Psi}\to A^{\Psi}$.
Since $\tilde{S}$ is contractive, so is $S^{\Psi}$. Moreover, if
$\left(x_{i}\right)_{i\in\netdirset I}$ is an approximate identity
for $A^{\Psi}$, then since $A^{\Psi}$ is a non-degenerate subalgebra
of $\M(A\rtimes_{\rho}\Gamma)$, we have $x_{i}\xrightarrow[i\in\netdirset I]{}\one_{\M(A\rtimes_{\rho}\Gamma)}$
strictly in $\M(A\rtimes_{\rho}\Gamma)$, and the non-degeneracy of
$\tilde{S}$ implies that $S^{\Psi}x_{i}=\tilde{S}x_{i}\xrightarrow[i\in\netdirset I]{}\one_{\M(A\rtimes_{\rho}\Gamma)}$
strictly in $\M(A\rtimes_{\rho}\Gamma)$. Hence we must have $\left\Vert S^{\Psi}\right\Vert \ge1$.
All in all, $\left\Vert S^{\Psi}\right\Vert =1$.

The last assertion about respecting composition is obvious. 
\end{proof}
\begin{rem}
In Proposition~\ref{prop:single map} the existence of $\tilde{S}$
as well as its non-degeneracy assuming that of $S$ are true for actions
of arbitrary, not necessarily abelian, locally compact groups $\Gamma$
and for every crossed-product functor satisfying the equivalent conditions
of \cite[Theorem~4.9]{Buss_Echterhoff_Willett__exotic_cr_prod_BC_conj},
in particular for universal and reduced crossed products.
\end{rem}

We were not able to answer the following pertinent question:
\begin{question}
\label{ques:non_deg_deformed_CP_map}In Proposition~\ref{prop:single map},
is it always true that the restriction $S^{\Psi}:A^{\Psi}\to A^{\Psi}$
is non-degenerate?
\end{question}

Our main aim in this paper is to deform \emph{quantum convolution
semigroups}, which will be achieved in Theorem~\ref{thm:mainconv}.
To deduce the required continuity it is tempting to try to use directly
\cst-algebraic arguments; this would be possible if we could provide
a positive answer to the following question.
\begin{question}
\label{ques:deformed_C0_semig}Suppose that ${(S_{t})}_{t\geq0}$
is a point--norm continuous semigroup of non-degenerate completely
positive maps on $A$ with $S_{0}=\id_{A}$, commuting with the action
$\Gamma\overset{\rho}{\curvearrowright}A$, i.e.~such that $\rho_{\gamma}\circ S_{t}=S_{t}\circ\rho_{\gamma}$
for each $t\geq0$, $\gamma\in\Gamma$. Is it then true that the deformed
semigroup ${(S_{t}^{\Psi})}_{t\geq0}$ of  completely positive maps
on $A^{\Psi}$ is point--norm continuous?
\end{question}

In the specific context of Theorem~\ref{thm:mainconv} below we could
answer Question~\ref{ques:deformed_C0_semig} affirmatively with
the aid of \cite[Lemma~2.3]{Skalski_Viselter__convolution_semigroups},
without appealing to von Neumann algebras as in Lemma~\ref{lem:convolution_oper_converg_by_ext},
if we knew the following very general question had an affirmative
answer.
\begin{question}
\label{ques:point_norm_point_strict}Let ${(S_{t})}_{t\geq0}$ be
a $C_{0}$-semigroup of non-degenerate completely positive maps on
a \cst-algebra $A$. Is it then true that the semigroup on $\M(A)$
obtained by canonically extending each operator $S_{t}$ is point--strict
continuous?
\end{question}

We have been informed by Orr M.~Shalit and Michael Skeide that they
were recently able to solve Question~\ref{ques:point_norm_point_strict}
affirmatively in \cite{Shalit_Skeide__cont_strict_CP_semig}.

\subsection{Deforming convolution semigroups}

We are ready to present the main results of this paper. The setting
is that of Standing~Hypothesis~\ref{stand_hyp}, i.e.~we assume
that $\G$ is a locally compact quantum group, $\Gamma$ is a locally
compact abelian group that is a closed quantum subgroup of $\G$,
and $\Psi$ is a normalised continuous $2$-cocycle on $\hat{\Gamma}$.
The notation we use is that of Subsections~\ref{subsec:Rieffel_deform_LCQGs}
and \ref{subsec:props_Rieffel_deform_LCQGs}.

By (\ref{eq:L_gamma_group_like}) and (\ref{eq:R_gamma_group_like})
the (commuting) left and right translation actions $\Gamma\overset{\rho^{L},\rho^{R}}{\curvearrowright}\Cz(\G)$
given by $\gamma\mapsto\Ad(L_{\gamma})|_{\Cz(\G)},\Ad(R_{\gamma})|_{\Cz(\G)}$
satisfy  
\begin{align}
\begin{gathered}\Delta\circ\rho_{\gamma}^{L}=(\rho_{\gamma}^{L}\tensor\i)\circ\Delta\text{ and }\Delta\circ\rho_{\gamma}^{R}=(\i\tensor\rho_{\gamma}^{R})\circ\Delta,\\
(\rho_{\gamma}^{R}\tensor\rho_{\gamma}^{L})\circ\Delta=\Delta
\end{gathered}
 & \qquad(\forall_{\gamma\in\Gamma})\label{eq:left_right_translation_actions}
\end{align}
(the first two identities were already noted in (\ref{eq:Delta_left_right_transl}),
and they also follow from the fact that $\rho^{L},\rho^{R}$ are the
left and right, respectively, quantum homomorphisms associated with
$\Gamma\le\G$).

Let $\pi_{\mathrm{u}}:\CzU(\G)\to\Cz(\G)$ be the canonical surjection.
By (\ref{eq:left_right_translation_actions}) and \cite[Proposition~7.1]{Kustermans__LCQG_universal}
the actions $\Gamma\overset{\rho^{L},\rho^{R}}{\curvearrowright}\Cz(\G)$
lift uniquely to (commuting) actions $\Gamma\overset{\rho^{L,\mathrm{u}},\rho^{R,\mathrm{u}}}{\curvearrowright}\CzU(\G)$
such that 
\begin{align}
\begin{gathered}\rho_{\gamma}^{L}\circ\pi_{\mathrm{u}}=\pi_{\mathrm{u}}\circ\rho_{\gamma}^{L,\mathrm{u}}\text{ and }\rho_{\gamma}^{R}\circ\pi_{\mathrm{u}}=\pi_{\mathrm{u}}\circ\rho_{\gamma}^{R,\mathrm{u}},\\
\Delta^{\mathrm{u}}\circ\rho_{\gamma}^{L,\mathrm{u}}=(\rho_{\gamma}^{L,\mathrm{u}}\tensor\i)\circ\Delta^{\mathrm{u}}\text{ and }\Delta^{\mathrm{u}}\circ\rho_{\gamma}^{R,\mathrm{u}}=(\i\tensor\rho_{\gamma}^{R,\mathrm{u}})\circ\Delta^{\mathrm{u}}
\end{gathered}
 & \qquad(\forall_{\gamma\in\Gamma}),\label{eq:univ_left_right_translation_actions}
\end{align}
and these actions satisfy 
\begin{equation}
(\rho_{\gamma}^{R,\mathrm{u}}\tensor\rho_{\gamma}^{L,\mathrm{u}})\circ\Delta^{\mathrm{u}}=\Delta^{\mathrm{u}}\qquad(\forall_{\gamma\in\Gamma});\label{eq:univ_left_right_translation_actions_2}
\end{equation}
we leave filling the details to the reader. The resulting left-right
translation action $\Gamma^{2}\overset{\rho^{\mathrm{u}}}{\curvearrowright}\CzU(\G)$
is given by $(\gamma_{1},\gamma_{2})\mapsto\rho_{\gamma_{1}}^{L,\mathrm{u}}\circ\rho_{\gamma_{2}}^{R,\mathrm{u}}$,
and its `diagonal' is called the \emph{adjoint action} $\Gamma\overset{\mathrm{ad}^{\mathrm{u}}}{\curvearrowright}\CzU(\G)$;
namely, $\mathrm{ad}_{\gamma}^{\mathrm{u}}:=\rho_{\gamma}^{L,\mathrm{u}}\circ\rho_{\gamma}^{R,\mathrm{u}}$
for each $\gamma\in\Gamma$.

Our intention is to deform convolution semigroups on $\G$, so the
next proposition plays a key role. Recall from Theorem~\ref{thm:C0_G_Psi_crossed_prod}
that the natural representation $\pi^{\mathrm{can}}$ of $\Cz(\G)\rtimes_{\rho}\Gamma^{2}$
on $\Ltwo(\G)$ restricts to a bijection $\pi^{\Psi}$ of $\Cz(\G)^{\overline{\Psi}\boxtimes\dot{\Psi}}$
onto $\Cz(\GPsi)\subseteq B(\Ltwo(\G))$. Recall also from Theorem~\ref{thm:Psi_comult_crossed_prod}
the morphism $\Delta^{\Psi}\in\Mor(\Cz(\G)^{\overline{\Psi}\boxtimes\dot{\Psi}},\Cz(\G)^{\overline{\Psi}\boxtimes\dot{\Psi}}\tensormin\Cz(\G)^{\overline{\Psi}\boxtimes\dot{\Psi}})$
that satisfies $(\pi^{\Psi}\tensor\pi^{\Psi})\circ\Delta^{\Psi}=\Delta_{\GPsi}\circ\pi^{\Psi}$. 
\begin{prop}
\label{prop:single_functional} Let $\mu\in\CzU(\G)^{*}$. 
\begin{enumerate}
\item \label{enu:single_functional__1}The right convolution operator $\mathscr{R}_{\mu}:\Cz(\G)\to\Cz(\G)$
commutes with the left-right translation action $\Gamma^{2}\overset{\rho}{\curvearrowright}\Cz(\G)$
if and only if $\mu$ is invariant under the adjoint action $\Gamma\overset{\mathrm{ad}^{\mathrm{u}}}{\curvearrowright}\CzU(\G)$. 
\item \label{enu:single_functional__2}If the equivalent conditions above
hold and $\mu$ is in addition a state, then:
\begin{enumerate}
\item \label{enu:single_functional__2_a}Applying the construction of Proposition~\ref{prop:single map}
to $\mathscr{R}_{\mu}$ yields an operator $\mathscr{R}_{\mu}^{\Psi}$
on $\Cz(\G)^{\overline{\Psi}\boxtimes\dot{\Psi}}$, and the operator
$\pi^{\Psi}\circ\mathscr{R}_{\mu}^{\Psi}\circ{(\pi^{\Psi})}^{-1}$
on $\Cz(\GPsi)$ is the right convolution operator of some state $\mu^{\Psi}$
of $\CzU(\GPsi)$.
\item \label{enu:single_functional__2_b}The state $\mu^{\Psi}$ is invariant
under the adjoint action $\Gamma\overset{}{\curvearrowright}\CzU(\G^{\Psi})$. 
\item \label{enu:single_functional__2_c}Repeating the construction of part~\ref{enu:single_functional__2_a}
with $\GPsi,\overline{\Psi},\mu^{\Psi}$ instead of $\G,\Psi,\mu$
and keeping in mind the natural identification of $(\GPsi)^{\overline{\Psi}}$
with $\G$, the new state ${(\mu^{\Psi})}^{\overline{\Psi}}$ equals
$\mu$.
\end{enumerate}
\end{enumerate}
Similar results hold for left convolution operators.

\end{prop}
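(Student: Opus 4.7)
The plan is to prove the four sub-statements in order, using Theorem~\ref{thm:SkV_Thm_3_2_C_0} throughout to pass between states of $\CzU(\G)$ and their right convolution operators; the corresponding statements for left convolution operators then follow by the same arguments with the roles of left and right interchanged.

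For~\ref{enu:single_functional__1}, write $\mathscr{R}_{\mu} = (\mu\ot\id)\circ\Delta^{\text{u-r}}$. Lifting the relations in~(\ref{eq:left_right_translation_actions}) and~(\ref{eq:univ_left_right_translation_actions_2}) from $\Delta$ to $\Delta^{\text{u-r}}$ (via $\Delta^{\text{u-r}}\circ\pi_{\mathrm{u}} = (\id\ot\pi_{\mathrm{u}})\circ\Delta^{\mathrm{u}}$) yields $\Delta^{\text{u-r}}\circ\rho_{\gamma}^{L} = (\rho_{\gamma}^{L,\mathrm{u}}\ot\id)\circ\Delta^{\text{u-r}}$, $\Delta^{\text{u-r}}\circ\rho_{\gamma}^{R} = (\id\ot\rho_{\gamma}^{R})\circ\Delta^{\text{u-r}}$, and $(\rho_{\gamma}^{R,\mathrm{u}}\ot\rho_{\gamma}^{L})\circ\Delta^{\text{u-r}} = \Delta^{\text{u-r}}$. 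Slicing the second relation by $\mu\ot\id$ gives the unconditional identity $\mathscr{R}_{\mu}\circ\rho_{\gamma}^{R} = \rho_{\gamma}^{R}\circ\mathscr{R}_{\mu}$, while a direct computation using the first and third yields $\mathscr{R}_{\mu}\circ\rho_{\gamma}^{L} = \mathscr{R}_{\mu\circ\rho_{\gamma}^{L,\mathrm{u}}}$ and $\rho_{\gamma}^{L}\circ\mathscr{R}_{\mu} = \mathscr{R}_{\mu\circ\rho_{-\gamma}^{R,\mathrm{u}}}$. By the injectivity of $\nu\mapsto\mathscr{R}_{\nu}$ from Theorem~\ref{thm:SkV_Thm_3_2_C_0}~\ref{enu:SkV_Thm_3_2_C_0__1}, these are equal for all $\gamma$ iff $\mu\circ\mathrm{ad}_{\gamma}^{\mathrm{u}} = \mu$ for all $\gamma$.

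For~\ref{enu:single_functional__2_a}, apply Proposition~\ref{prop:single map} to the non-degenerate CP map $\mathscr{R}_{\mu}$ to obtain $\mathscr{R}_{\mu}^{\Psi}$ on $\Cz(\G)^{\overline\Psi\boxtimes\dot\Psi}$, and set $T := \pi^{\Psi}\circ\mathscr{R}_{\mu}^{\Psi}\circ(\pi^{\Psi})^{-1}$. I will verify the hypotheses of Theorem~\ref{thm:SkV_Thm_3_2_C_0}~\ref{enu:SkV_Thm_3_2_C_0__1} for $T$. The commutation $(\tilde{\mathscr{R}_{\mu}}\ot\id)\circ\tilde\Delta = \tilde\Delta\circ\tilde{\mathscr{R}_{\mu}}$ at the crossed-product level is checked on generators: on $\iota(\Cz(\G))$ it reduces to $(\mathscr{R}_{\mu}\ot\id)\circ\Delta = \Delta\circ\mathscr{R}_{\mu}$, and on $\lambda_{\gamma_{1},\gamma_{2}}$ it is trivial because formula~(\ref{eq:Sformula}) in the proof of Proposition~\ref{prop:single map} shows that $\tilde{\mathscr{R}_{\mu}}$ fixes every $\lambda$-element. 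Both legs of the unitary $V := (\lambda^{R}\ot\lambda^{L})(\Upsilon)$ that implements $\Delta^{\Psi} = \Ad(V)\circ\tilde\Delta$ lie in the strictly-extended image of $\lambda$, so $V$ lies in the multiplicative domain of $\tilde{\mathscr{R}_{\mu}}\ot\id$ and is fixed by it; hence $(\mathscr{R}_{\mu}^{\Psi}\ot\id)\circ\Delta^{\Psi} = \Delta^{\Psi}\circ\mathscr{R}_{\mu}^{\Psi}$, and conjugating by $\pi^{\Psi}$ via $(\pi^{\Psi}\ot\pi^{\Psi})\circ\Delta^{\Psi} = \Delta_{\GPsi}\circ\pi^{\Psi}$ gives the required intertwining $(T\ot\id)\circ\Delta_{\GPsi} = \Delta_{\GPsi}\circ T$.

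The main obstacle is the non-degeneracy of $T$ on $\Cz(\GPsi)$, which is exactly the open Question~\ref{ques:non_deg_deformed_CP_map}. The plan is to sidestep it by passing to the von Neumann algebra: $\mathscr{R}_{\mu}$ extends to a unital normal CP map on $\Linfty(\G)$ commuting with the left-right translation action, so the von Neumann algebraic Rieffel deformation produces a unital normal CP extension $\overline T$ of $T$ on $\Linfty(\GPsi)$ satisfying $(\overline T\ot\id)\circ\Delta_{\GPsi} = \Delta_{\GPsi}\circ\overline T$; Lemma~\ref{lem:convolution_oper_converg_by_ext} combined with the normal analogue of Theorem~\ref{thm:SkV_Thm_3_2_C_0}~\ref{enu:SkV_Thm_3_2_C_0__1} (cf.~\cite[Theorem~2.1]{Skalski_Viselter__convolution_semigroups}) then produces the required state $\mu^{\Psi}$ of $\CzU(\GPsi)$ with $T = \mathscr{R}_{\mu^{\Psi}}$. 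For~\ref{enu:single_functional__2_b}, part~\ref{enu:single_functional__1} applied to $\GPsi$ reduces the invariance of $\mu^{\Psi}$ to commutation of $T$ with the left-right translation action of $\Gamma^{2}$ on $\Cz(\GPsi)$; by Proposition~\ref{prop:left_right_transl_act_G_Psi} this transfers to commutation of $\mathscr{R}_{\mu}^{\Psi}$ with $\Ad(\lambda_{\gamma_{1},\gamma_{2}})$ on $\Cz(\G)^{\overline\Psi\boxtimes\dot\Psi}$, which is immediate from the multiplicative domain property applied to $\tilde{\mathscr{R}_{\mu}}$ and the fixed unitary $\lambda_{\gamma_{1},\gamma_{2}}$. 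Finally, for~\ref{enu:single_functional__2_c}, Proposition~\ref{prop:from_G_Psi_to_G} identifies the doubly-deformed algebra with $\iota(\Cz(\G))\subseteq\M(\Cz(\G)\rtimes_{\rho}\Gamma^{2})$ and, by the functoriality clause of Proposition~\ref{prop:single map}, the second Rieffel extension $(\mathscr{R}_{\mu}^{\Psi})^{\overline\Psi}$ is a further restriction of the same $\tilde{\mathscr{R}_{\mu}}$, which on $\iota(\Cz(\G))$ reduces to $\iota\circ\mathscr{R}_{\mu}\circ\iota^{-1}$; the uniqueness in Theorem~\ref{thm:SkV_Thm_3_2_C_0}~\ref{enu:SkV_Thm_3_2_C_0__1} then forces $(\mu^{\Psi})^{\overline\Psi} = \mu$.
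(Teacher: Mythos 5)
Your treatment of part~\ref{enu:single_functional__1}, of \ref{enu:single_functional__2_b}, and of \ref{enu:single_functional__2_c} matches the paper's, and the core computation in \ref{enu:single_functional__2_a} --- that $\tilde{\mathscr{R}_{\mu}}$ intertwines $\tilde{\Delta}$ on $\Cc(\Gamma^{2},\Cz(\G))$ and fixes the unitary $(\lambda^{R}\tensor\lambda^{L})(\Upsilon)$ by a multiplicative-domain argument --- is exactly the identity (\ref{eq:S_tilde_Delta_tilde_Upsilon}) that the paper proves. The problem is the step from there to the conclusion. You assert that ``conjugating by $\pi^{\Psi}$ gives $(T\ot\id)\circ\Delta_{\GPsi}=\Delta_{\GPsi}\circ T$''; but this expression is undefined unless $T$ (equivalently $\mathscr{R}_{\mu}^{\Psi}$) is already known to be non-degenerate, which is precisely the open Question~\ref{ques:non_deg_deformed_CP_map}, so at that point nothing has yet been established. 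You do recognise the obstacle, but the von Neumann detour you propose to repair it is itself a gap: it presupposes that $\Linfty(\GPsi)$ sits inside $\Linfty(\G)\rtimes_{\rho}\Gamma^{2}$ as the von Neumann Landstad algebra, that $\Delta_{\GPsi}$ is there given by $\Ad((\lambda^{R}\tensor\lambda^{L})(\Upsilon))\circ\tilde{\Delta}$, and that the normal extension of $\tilde{\mathscr{R}_{\mu}}$ preserves $\Linfty(\GPsi)$ --- i.e.\ the von Neumann analogues of Theorems~\ref{thm:C0_G_Psi_crossed_prod} and \ref{thm:Psi_comult_crossed_prod} together with a von Neumann version of Proposition~\ref{prop:single map}. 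None of this is established in the paper, and the reference that would supply it, \cite{Fima_Vainerman__twist_Rieffel_deform}, works under strictly stronger hypotheses ($\Psi$ a bicharacter, stability of $\Gamma$) than Standing Hypothesis~\ref{stand_hyp}. Moreover, Lemma~\ref{lem:convolution_oper_converg_by_ext} concerns $w^{*}$-convergence of nets of functionals and cannot ``produce'' the state $\mu^{\Psi}$; it has no role here.

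The paper closes this gap entirely within the \cst-picture and never forms $(T\ot\id)$. Proposition~\ref{prop:single map} already gives $\Vert T\Vert=\Vert\mathscr{R}_{\mu}^{\Psi}\Vert=1$, and one then verifies the \emph{other} characterisation in Theorem~\ref{thm:SkV_Thm_3_2_C_0}~\ref{enu:SkV_Thm_3_2_C_0__1}: that $T$ commutes with the left convolution operators $\mathscr{L}_{\nu}^{\GPsi}$. This is first checked for $\nu$ in the $w^{*}$-dense subspace $\mathbb{O}=\{\vartheta\circ{(\pi^{\Psi})}^{-1}:\vartheta\in(\Cz(\G)\rtimes_{\rho}\Gamma^{2})^{*}\}$, where (via Observation~\ref{obs:slice_maps_nondeg_incl}) everything reduces to slices of the well-posed identity (\ref{eq:S_tilde_Delta_tilde_Upsilon}) for the genuinely non-degenerate $\tilde{\mathscr{R}_{\mu}}$, and is then extended to all $\nu\in\Cz(\GPsi)^{*}$ by the left version of Lemma~\ref{lem:convolution_oper_converg}~\ref{enu:convolution_oper_converg__1}; non-degeneracy of $T$ and the relation (\ref{eq:right_conv_oper_comm_rel}) come out only \emph{a posteriori}. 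If you wish to keep your structure, replace the von Neumann detour by this slice-and-density argument.
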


\begin{proof}
\ref{enu:single_functional__1}. The map $\mathscr{R}_{\mu}$ commutes
with the right action $\Gamma\overset{\rho^{R}}{\curvearrowright}\Cz(\G)$
since $R_{\gamma}\in\Linfty(\hat{\G})'$ for all $\gamma\in\Gamma$
(note again our convention for the right convolution operator $\mathscr{R}_{\mu}$),
so the non-trivial condition comes from the commutation with the left
action $\Gamma\overset{\rho^{L}}{\curvearrowright}\Cz(\G)$. By the
formulas $\mathscr{R}_{\mu}=(\mu\tensor\id)\circ\Delta^{\text{u-r}}$,
(\ref{eq:univ_left_right_translation_actions}), and (\ref{eq:univ_left_right_translation_actions_2}),
for every $\gamma\in\Gamma$ we have $\mathscr{R}_{\mu}\circ\rho_{\gamma}^{L}=\mathscr{R}_{\mu\circ\rho_{\gamma}^{L,\mathrm{u}}}$
and $\rho_{\gamma}^{L}\circ\mathscr{R}_{\mu}=\mathscr{R}_{\mu\circ\rho_{-\gamma}^{R,\mathrm{u}}}$.
Therefore, by the injectivity of the map $\CzU(\G)^{*}\ni\nu\mapsto\mathscr{R}_{\nu}$
\cite[Proposition~8.3 and its proof]{Daws__mult_self_ind_dual_B_alg}
(see also \cite[Theorem~2.1~(a)]{Skalski_Viselter__convolution_semigroups}),
$\mathscr{R}_{\mu}$ commutes with $\rho^{L}$ if and only if $\mu\circ\rho_{\gamma}^{L,\mathrm{u}}=\mu\circ\rho_{-\gamma}^{R,\mathrm{u}}$,
i.e.~$\mu\circ\mathrm{ad}_{\gamma}^{\mathrm{u}}=\mu$, for all $\gamma\in\Gamma$.

\ref{enu:single_functional__2}~\ref{enu:single_functional__2_a}.
The foregoing implies that we are in the situation where we can apply
Proposition~\ref{prop:single map} to the non-degenerate completely
positive map $S:=\mathscr{R}_{\mu}$, yielding the canonical non-degenerate
completely positive map $\tilde{S}:\Cz(\G)\rtimes_{\rho}\Gamma^{2}\to\Cz(\G)\rtimes_{\rho}\Gamma^{2}$
and its restriction $S^{\Psi}=\mathscr{R}_{\mu}^{\Psi}:\Cz(\G)^{\overline{\Psi}\boxtimes\dot{\Psi}}\to\Cz(\G)^{\overline{\Psi}\boxtimes\dot{\Psi}}$
having norm $1$. We verify that the  completely positive map $T:=\pi^{\Psi}\circ S^{\Psi}\circ{(\pi^{\Psi})}^{-1}:\Cz(\GPsi)\to\Cz(\GPsi)$
of norm $1$ satisfies the commutation relation  of Theorem~\ref{thm:SkV_Thm_3_2_C_0}~\ref{enu:SkV_Thm_3_2_C_0__1}
for the locally compact quantum group $\GPsi$, namely that it commutes
with the left convolution operators $\mathscr{L}_{\nu}^{\GPsi}=(\i\tensor\nu)\circ\Delta_{\Cz(\GPsi)}$,
$\nu\in\Cz(\GPsi)^{*}$, of $\GPsi$. By Theorem~\ref{thm:Psi_comult_crossed_prod}
this is equivalent to $S^{\Psi}$ commuting with the operators $(\i\tensor\theta)\circ\Delta^{\Psi}$,
$\theta\in(\Cz(\G)^{\overline{\Psi}\boxtimes\dot{\Psi}})^{*}$, which
\emph{a posteriori} map $\Cz(\G)^{\overline{\Psi}\boxtimes\dot{\Psi}}$
into itself. 

The subspace $\mathbb{O}:=\{\vartheta\circ{(\pi^{\Psi})}^{-1}:\vartheta\in(\Cz(\G)\rtimes_{\rho}\Gamma^{2})^{*}\}$
of $\Cz(\GPsi)^{*}$ is $w^{*}$-dense by the Hahn--Banach theorem
because an element of $\Cz(\G)^{\overline{\Psi}\boxtimes\dot{\Psi}}\subseteq\M(\Cz(\G)\rtimes_{\rho}\Gamma^{2})$
annihilated by all of $(\Cz(\G)\rtimes_{\rho}\Gamma^{2})^{*}$ must
be zero. Thus, by the left version of Lemma~\ref{lem:convolution_oper_converg}~\ref{enu:convolution_oper_converg__1},
it suffices to show that $T$ commutes with the operators $\mathscr{L}_{\nu}^{\GPsi}$
for $\nu\in\mathbb{O}$. In turn, this is equivalent to $S^{\Psi}$
commuting with the operators $(\i\tensor\vartheta|_{\Cz(\G)^{\overline{\Psi}\boxtimes\dot{\Psi}}})\circ\Delta^{\Psi}$,
$\vartheta\in(\Cz(\G)\rtimes_{\rho}\Gamma^{2})^{*}$. By the definition
of $\Delta^{\Psi}$ in Theorem~\ref{thm:Psi_comult_crossed_prod},
it suffices to show that $\tilde{S}$ commutes with the operators
$(\i\tensor\vartheta)\circ\Ad((\lambda^{R}\tensor\lambda^{L})(\Upsilon))\circ\tilde{\Delta}$,
$\vartheta\in(\Cz(\G)\rtimes_{\rho}\Gamma^{2})^{*}$ (see Observation~\ref{obs:slice_maps_nondeg_incl}).
This amounts to the equality 
\begin{equation}
(\tilde{S}\tensor\i)\circ\Ad((\lambda^{R}\tensor\lambda^{L})(\Upsilon))\circ\tilde{\Delta}=\Ad((\lambda^{R}\tensor\lambda^{L})(\Upsilon))\circ\tilde{\Delta}\circ\tilde{S}\label{eq:S_tilde_Delta_tilde_Upsilon}
\end{equation}
(which makes sense because $\tilde{S}$, unlike $S^{\Psi}$, is already
known to be non-degenerate).

The unitary $(\lambda^{R}\tensor\lambda^{L})(\Upsilon)$ discussed
in (\ref{eq:R_L_Upsilon}) satisfies $(\tilde{S}\ot\id)\big((\lambda^{R}\tensor\lambda^{L})(\Upsilon)\big)=(\lambda^{R}\tensor\lambda^{L})(\Upsilon)$;
thus it is in the multiplicative domain of $\tilde{S}\ot\id$, and
it suffices in fact to show that 
\[
(\tilde{S}\ot\id)\circ\tilde{\Delta}=\tilde{\Delta}\circ\tilde{S}.
\]
Let then $f\in\Cc(\Gamma^{2},\Cz(\G))\subseteq\Cz(\G)\rtimes_{\rho}\Gamma^{2}$
and compute on one hand 
\begin{align*}
(\tilde{\Delta}\circ\tilde{S})(f) & =\int_{(\gamma_{1},\gamma_{2})\in\Gamma^{2}}(\iota\ot\iota)(\Delta(S(f(\gamma_{1},\gamma_{2}))))\cdot(\lambda_{\gamma_{1}}^{L}\tensor\lambda_{\gamma_{2}}^{R})\d(\gamma_{1},\gamma_{2})\\
 & =\int_{(\gamma_{1},\gamma_{2})\in\Gamma^{2}}((\iota\circ S)\ot\iota)(\Delta(f(\gamma_{1},\gamma_{2})))\cdot(\lambda_{\gamma_{1}}^{L}\tensor\lambda_{\gamma_{2}}^{R})\d(\gamma_{1},\gamma_{2})
\end{align*}
and on the other 
\[
(\tilde{S}\ot\id)\circ\tilde{\Delta}(f)=\int_{(\gamma_{1},\gamma_{2})\in\Gamma^{2}}(\tilde{S}\ot\id)\big((\iota\ot\iota)(\Delta(f(\gamma_{1},\gamma_{2})))\cdot(\lambda_{\gamma_{1}}^{L}\tensor\lambda_{\gamma_{2}}^{R})\big)\d(\gamma_{1},\gamma_{2}),
\]
which are equal.

\ref{enu:single_functional__2}~\ref{enu:single_functional__2_b}.
By part \ref{enu:single_functional__1} applied to $\GPsi$ in place
of $\G$, we should show that the operator $\pi^{\Psi}\circ\mathscr{R}_{\mu}^{\Psi}\circ{(\pi^{\Psi})}^{-1}$
on $\Cz(\GPsi)$ commutes with the left-right translation action $\Gamma^{2}\overset{}{\curvearrowright}\Cz(\GPsi)$,
which by Proposition~\ref{prop:left_right_transl_act_G_Psi} is equal
to $\pi^{\Psi}\circ\rho_{\gamma_{1},\gamma_{2}}^{\overline{\Psi}\boxtimes\dot{\Psi}}\circ{(\pi^{\Psi})}^{-1}$
at $(\gamma_{1},\gamma_{2})\in\Gamma^{2}$. But $\rho_{\gamma_{1},\gamma_{2}}^{\overline{\Psi}\boxtimes\dot{\Psi}}$
is just the restriction to $\Cz(\G)^{\overline{\Psi}\boxtimes\dot{\Psi}}$
of the automorphism of $\Cz(\G)\rtimes_{\rho}\Gamma^{2}$ given by
$\Ad(\lambda_{\gamma_{1},\gamma_{2}})$, i.e.~induced by the automorphism
$\rho_{\gamma_{1},\gamma_{2}}$ of $\Cz(\G)$. The required commutation
is now clear because of the definition of $\mathscr{R}_{\mu}^{\Psi}$
and the assumption that $\mathscr{R}_{\mu}$ commutes with $\rho_{\gamma_{1},\gamma_{2}}$.

\ref{enu:single_functional__2}~\ref{enu:single_functional__2_c}.
This follows from the second (last) assertion in Proposition~\ref{prop:from_G_Psi_to_G}.
We leave the technical verification to the reader.
\end{proof}
\begin{rem}
The proof of Proposition~\ref{prop:single_functional}~\ref{enu:single_functional__2}~\ref{enu:single_functional__2_a}.~would
become technically simpler if we knew in advance that $S^{\Psi}=\mathscr{R}_{\mu}^{\Psi}$
is non-degenerate (Question~\ref{ques:non_deg_deformed_CP_map}),
for then we could say immediately that our goal is to show that $(T\ot\id)\circ\Delta_{\Cz(\GPsi)}=\Delta_{\Cz(\GPsi)}\circ T$
(see (\ref{eq:right_conv_oper_comm_rel})), which is equivalent to
$(S^{\Psi}\ot\id)\circ\Delta^{\Psi}=\Delta^{\Psi}\circ S^{\Psi}$,
and for this last equality it suffices to verify (\ref{eq:S_tilde_Delta_tilde_Upsilon}).
\end{rem}

\begin{rem}
\label{rem:single_functional_trivial}The condition of Proposition~\ref{prop:single_functional}~\ref{enu:single_functional__1}
holds trivially in the following particular cases:
\begin{enumerate}
\item \label{enu:rem:single_functional_trivial__1}When $\G$ is a locally
compact group $G$ and the support of the measure $\mu$ is contained
in the centraliser $Z_{G}(\Gamma)$ of $\Gamma$ inside $G$. (See
more on this case in Subsection~\ref{subsec:commutative} below.)
\item When $\Gamma\le\G$ factors through the centre $\mathscr{Z}(\G)$,
hence $\GPsi=\G$ by Proposition~\ref{prop:Gamma_fact_thru_center}.
In this case $R_{\gamma}=L_{-\gamma}$, thus $\rho_{\gamma}^{R}=\rho_{-\gamma}^{L}$,
for all $\gamma\in\Gamma$, hence $\mathrm{ad}^{\mathrm{u}}$ is trivial.
\end{enumerate}
\end{rem}

Another trivial case is when $\mu$ is `supported by $\Gamma$'.
In this situation $\mu^{\Psi}$ too is `supported by $\Gamma$'
as the next corollary shows.
\begin{cor}
\label{cor:ext_of_prob_meas_on_Gamma}Suppose that $\G$ is co-amenable
and write $\Pi:\Cz(\G)\to\Cz(\Gamma)$ for the strong quantum homomorphism
associated to $\Gamma$ being a closed quantum subgroup of $\G$.
Let $\nu$ be a probability measure on $\Gamma$ and consider the
state $\mu:=\nu\circ\Pi$ of $\Cz(\G)$. Then $\mu$ satisfies the
condition of Proposition~\ref{prop:single_functional}~\ref{enu:single_functional__1}
and the `deformed' state $\mu^{\Psi}$ of $\Cz(\GPsi)$ is also
`supported by $\Gamma$', i.e., it factors through the strong quantum
homomorphism associated to $\Gamma$ being a closed quantum subgroup
of $\GPsi$.
\end{cor}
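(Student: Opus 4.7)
The plan is to prove both assertions by direct computation, exploiting the intertwining properties of $\Pi$ and the realisation of $\mathscr{R}_\mu$ as an integral of translations by elements of $\Gamma$.

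For the first assertion we observe that $\Pi$ intertwines the adjoint action $\Gamma\overset{\mathrm{ad}^{\mathrm{u}}}{\curvearrowright}\CzU(\G)=\Cz(\G)$ (the identification being due to the co-amenability of $\G$) with the corresponding adjoint action of $\Gamma$ on $\Cz(\Gamma)$. Indeed, as recorded in the proof of Proposition~\ref{prop:G_Psi_co_unit}, $\Pi$ intertwines $\rho^L$ with left translation and $\rho^R$ with right translation on $\Cz(\Gamma)$. Since $\Gamma$ is abelian, its adjoint action on itself is trivial, and thus $\Pi\circ\mathrm{ad}_\gamma^{\mathrm{u}}=\Pi$ for every $\gamma\in\Gamma$. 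Consequently $\mu\circ\mathrm{ad}_\gamma^{\mathrm{u}}=\nu\circ\Pi=\mu$, verifying the condition of Proposition~\ref{prop:single_functional}~\ref{enu:single_functional__1}.

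For the second assertion, set $\mu_{\GPsi}:=\nu\circ\Pi_{\GPsi}$, where $\Pi_{\GPsi}$ is the strong quantum homomorphism supplied by Proposition~\ref{prop:G_Psi_co_unit} (which applies because $\GPsi$ is also co-amenable). Since $\rho\mapsto\mathscr{R}_\rho$ on $\CzU(\GPsi)^*$ is injective, it suffices to prove $\pi^{\Psi}\circ\mathscr{R}_\mu^{\Psi}\circ(\pi^{\Psi})^{-1}=\mathscr{R}_{\mu_{\GPsi}}^{\GPsi}$ on $\Cz(\GPsi)$. Starting from $\mathscr{R}_\mu=(\mu\tensor\id)\circ\Delta$, using $\mu=\nu\circ\Pi$, and invoking the description of $(\Pi\tensor\id)\circ\Delta$ as the left coaction $x\mapsto\bigl(\gamma\mapsto\rho_{-\gamma}^{L}(x)\bigr)$ recorded in Subsection~\ref{subsec:Rieffel_deform_LCQGs}, we find
\[
\mathscr{R}_\mu(a)=\int_\Gamma\rho_{-\gamma}^{L}(a)\,\dd\nu(\gamma)\qquad(a\in\Cz(\G)),
\]
and therefore, via formula~\eqref{eq:Sformula}, the canonical completely positive extension $\widetilde{\mathscr{R}_\mu}$ of $\mathscr{R}_\mu$ to $\Cz(\G)\rtimes_\rho\Gamma^2$ satisfies $\widetilde{\mathscr{R}_\mu}=\int_\Gamma(\rho_{-\gamma}^{L})^{\sim}\,\dd\nu(\gamma)$, where $(\rho_{-\gamma}^{L})^{\sim}$ denotes the automorphism of the crossed product obtained by applying $\rho_{-\gamma}^{L}$ coefficient-wise.

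The crux of the argument is the identity
\[
\pi^{\mathrm{can}}\circ(\rho_{-\gamma}^{L})^{\sim}=\Ad(L_{-\gamma})\circ\pi^{\mathrm{can}}\qquad(\gamma\in\Gamma),
\]
which follows because $L_{-\gamma}$ commutes with every $L_{\gamma_1}$ (as $\Gamma$ is abelian) and with every $R_{\gamma_2}$ (since $L_{-\gamma}\in\Linfty(\hat{\G})$ and $R_{\gamma_2}\in\Linfty(\hat{\G})'$), while $\Ad(L_{-\gamma})$ restricts on $\Cz(\G)$ to $\rho_{-\gamma}^{L}$. Extending this identity to the multiplier algebra by a strict-continuity argument on bounded sets, and restricting to $\Cz(\G)^{\overline{\Psi}\boxtimes\dot{\Psi}}$, we conclude that for every $y\in\Cz(\G)^{\overline{\Psi}\boxtimes\dot{\Psi}}$,
\[
\pi^{\Psi}(\mathscr{R}_\mu^{\Psi}(y))=\int_\Gamma\Ad(L_{-\gamma})(\pi^{\Psi}(y))\,\dd\nu(\gamma).
\]
The right-hand side equals $\mathscr{R}_{\mu_{\GPsi}}^{\GPsi}(\pi^{\Psi}(y))$ upon applying the displayed formula for $\mathscr{R}_\mu$ with $\GPsi$, $\overline{\Psi}$, and $\Pi_{\GPsi}$ in place of $\G$, $\Psi$, and $\Pi$. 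The main technical obstacle is the strict-continuous passage of the identity involving $\pi^{\mathrm{can}}$ together with the operator-valued integral from $\Cc(\Gamma^2,\Cz(\G))$ to the multiplier algebra, which should follow from manipulations analogous to those in the proof of Theorem~\ref{thm:Psi_comult_crossed_prod}.
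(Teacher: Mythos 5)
Your argument is correct, but it takes a genuinely different route from the paper's. The paper's proof is purely formal: starting from $\mu^{\Psi}=\epsilon_{\GPsi}\circ\pi^{\Psi}\circ\mathscr{R}_{\mu}^{\Psi}\circ{(\pi^{\Psi})}^{-1}$ and the factorisation $\epsilon_{\GPsi}=\epsilon_{\Gamma}\circ\pi_{\Gamma}^{\Psi}\circ\Pi^{\Psi}\circ{(\pi^{\Psi})}^{-1}$ of Proposition~\ref{prop:G_Psi_co_unit}, it uses the intertwining identity $\Pi\circ\mathscr{R}_{\mu}=\mathscr{R}_{\Gamma,\nu}\circ\Pi$ and the fact that the deformation respects composition to arrive at $\mu^{\Psi}=\epsilon_{\Gamma}\circ\pi_{\Gamma}^{\Psi}\circ\mathscr{R}_{\Gamma,\nu}^{\Psi}\circ\Pi^{\Psi}\circ{(\pi^{\Psi})}^{-1}$, which visibly factors through the strong quantum homomorphism; no spatial representation or integration is needed. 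You instead realise $\mathscr{R}_{\mu}$ as the average $\int_{\Gamma}\rho_{-\gamma}^{L}\d\nu(\gamma)$ of the inner automorphisms $\Ad(L_{-\gamma})$ and push this representation through the crossed product and the Landstad algebra. This buys the sharper explicit identification $\mu^{\Psi}=\nu\circ\Pi_{\GPsi}$ (implicit in, but not stated by, the paper's computation), at the cost of the vector-valued strict integrals whose interchange with $\pi^{\mathrm{can}}$ you rightly flag; these are standard (inner regularity of the Radon measure $\nu$ plus strict continuity of $\gamma\mapsto(\rho_{-\gamma}^{L})^{\sim}(x)$ on bounded sets suffice) but do need to be carried out. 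Two steps you should cite explicitly: the identification of $\Ad(L_{-\gamma})|_{\Cz(\GPsi)}$ with the left translation action of $\Gamma\le\GPsi$, used in your final step, is exactly Proposition~\ref{prop:left_right_transl_act_G_Psi}; and applying your integral formula to $\GPsi$ requires the co-amenability of $\GPsi$, which Proposition~\ref{prop:G_Psi_co_unit} provides. Your treatment of the first assertion (dismissed as obvious in the paper) is fine.
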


Remark that as before, we believe that the co-amenability condition
is not required.
\begin{proof}[Proof of Corollary~\ref{cor:ext_of_prob_meas_on_Gamma}]
The first assertion is obvious.

Using Proposition~\ref{prop:single_functional} as well as Proposition~\ref{prop:G_Psi_co_unit}
and its notation we have 
\begin{equation}
\begin{split}\mu^{\Psi} & =\epsilon_{\GPsi}\circ\pi^{\Psi}\circ\mathscr{R}_{\mu}^{\Psi}\circ{(\pi^{\Psi})}^{-1}=\epsilon_{\Gamma}\circ\pi_{\Gamma}^{\Psi}\circ\Pi^{\Psi}\circ{(\pi^{\Psi})}^{-1}\circ\pi^{\Psi}\circ\mathscr{R}_{\mu}^{\Psi}\circ{(\pi^{\Psi})}^{-1}\\
 & =\epsilon_{\Gamma}\circ\pi_{\Gamma}^{\Psi}\circ\Pi^{\Psi}\circ\mathscr{R}_{\mu}^{\Psi}\circ{(\pi^{\Psi})}^{-1}=\epsilon_{\Gamma}\circ\pi_{\Gamma}^{\Psi}\circ(\Pi\circ\mathscr{R}_{\mu})^{\Psi}\circ{(\pi^{\Psi})}^{-1}.
\end{split}
\label{eq:mu_Psi}
\end{equation}
On the other hand,
\[
\Pi\circ\mathscr{R}_{\mu}=(\mu\tensor\Pi)\circ\Delta_{\G}=(\nu\tensor\i)\circ(\Pi\tensor\Pi)\circ\Delta_{\G}=(\nu\tensor\i)\circ\Delta_{\Gamma}\circ\Pi=\mathscr{R}_{\Gamma,\nu}\circ\Pi,
\]
where $\mathscr{R}_{\Gamma,\nu}$ is the right convolution operator
on $\Cz(\Gamma)$ associated with $\nu$. In conclusion, the state
\[
\mu^{\Psi}=\epsilon_{\Gamma}\circ\pi_{\Gamma}^{\Psi}\circ(\mathscr{R}_{\Gamma,\nu}\circ\Pi)^{\Psi}\circ{(\pi^{\Psi})}^{-1}=\epsilon_{\Gamma}\circ\pi_{\Gamma}^{\Psi}\circ\mathscr{R}_{\Gamma,\nu}^{\Psi}\circ\Pi^{\Psi}\circ{(\pi^{\Psi})}^{-1}
\]
indeed factors through $\pi_{\Gamma}^{\Psi}\circ\Pi^{\Psi}\circ{(\pi^{\Psi})}^{-1}$,
as the second assertion claims.
\end{proof}
\begin{rem}
\label{rem:inv_by_averag}If $\Gamma$ is compact, we can construct
states of $\CzU(\G)$ satisfying the condition of Proposition~\ref{prop:single_functional}~\ref{enu:single_functional__1}
by averaging: if $\nu$ is an arbitrary state of $\CzU(\G)$, then
$\mu(x):=\int_{\Gamma}(\nu\circ\mathrm{ad}_{\gamma}^{\mathrm{u}})(x)\d\gamma$,
$x\in\CzU(\G)$, defines a state $\mu$ of $\CzU(\G)$ that is invariant
under the adjoint action. Furthermore, if $\nu$ is faithful, then
so is $\mu$.
\end{rem}

We now arrive at the first main result of this paper, asserting that
under a mild condition, a convolution semigroup of states on $\G$
can be canonically deformed into a convolution semigroup of states
on the Rieffel deformation $\GPsi$.
\begin{thm}
\label{thm:mainconv}Suppose that ${(\mu_{t})}_{t\geq0}$ is a $w^{*}$-continuous
convolution semigroup of states of $\CzU(\G)$ invariant under the
adjoint action $\Gamma\overset{\mathrm{ad}^{\mathrm{u}}}{\curvearrowright}\CzU(\G)$.
Applying the construction of Proposition~\ref{prop:single map} to
the operators in the $C_{0}$-semigroup ${(\mathscr{R}_{\mu_{t}})}_{t\geq0}$
of right convolution operators on $\Cz(\G)$ yields a $C_{0}$-semigroup
${(\mathscr{R}_{\mu_{t}}^{\Psi})}_{t\geq0}$ on $\Cz(\G)^{\overline{\Psi}\boxtimes\dot{\Psi}}$.
The $C_{0}$-semigroup ${(\pi^{\Psi}\circ\mathscr{R}_{\mu_{t}}^{\Psi}\circ{(\pi^{\Psi})}^{-1})}_{t\geq0}$
on $\Cz(\GPsi)$ is induced by a $w^{*}$-continuous convolution semigroup
of states ${(\mu_{t}^{\Psi})}_{t\geq0}$ of $\CzU(\GPsi)$ (via the
correspondence in Theorem~\ref{thm:SkV_Thm_3_2_C_0}~\ref{enu:SkV_Thm_3_2_C_0__2}).
The states ${(\mu_{t}^{\Psi})}_{t\geq0}$ are invariant under the
adjoint action $\Gamma\overset{}{\curvearrowright}\CzU(\G^{\Psi})$.

Repeating the above construction with $\GPsi,\overline{\Psi},{(\mu_{t}^{\Psi})}_{t\geq0}$
instead of $\G,\Psi,{(\mu_{t})}_{t\geq0}$, the new convolution semigroup
of states of $\CzU((\GPsi)^{\overline{\Psi}}\cong\G)$ equals ${(\mu_{t})}_{t\geq0}$.
\end{thm}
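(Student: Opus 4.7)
The plan is to break the proof into (a) getting the single-time objects, (b) the semigroup property, (c) $w^*$-continuity, (d) adjoint invariance, and (e) the inversion statement, and then argue each piece using what has been set up.

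For each fixed $t\ge0$, part~\ref{enu:single_functional__1} of Proposition~\ref{prop:single_functional} applies to $\mu_t$ by the assumed adjoint invariance, and part~\ref{enu:single_functional__2}\,\ref{enu:single_functional__2_a} then gives a state $\mu_t^\Psi$ of $\CzU(\GPsi)$ with $\mathscr R_{\mu_t^\Psi}=\pi^\Psi\circ\mathscr R_{\mu_t}^\Psi\circ(\pi^\Psi)^{-1}$. The semigroup law $\mu_s^\Psi\star\mu_t^\Psi=\mu_{s+t}^\Psi$ then reduces to checking $\mathscr R_{\mu_s^\Psi}\circ\mathscr R_{\mu_t^\Psi}=\mathscr R_{\mu_{s+t}^\Psi}$ (since $\nu\mapsto\mathscr R_\nu$ is injective and anti-multiplicative, see the proof of Theorem~\ref{thm:SkV_Thm_3_2_C_0}), and this is immediate from the identity $\mathscr R_{\mu_s}\circ\mathscr R_{\mu_t}=\mathscr R_{\mu_{s+t}}$ together with the fact that the $\Psi$-construction of Proposition~\ref{prop:single map} respects composition. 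Likewise $\mu_0^\Psi$ is the co-unit of $\GPsi$ because $\mathscr R_{\mu_0}=\id_{\Cz(\G)}$ has deformation $\id_{\Cz(\G)^{\overline\Psi\boxtimes\dot\Psi}}$.

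The main obstacle is the $w^*$-continuity of $t\mapsto\mu_t^\Psi$. Since point--norm continuity of the deformed semigroup on $\Cz(\GPsi)$ is exactly Question~\ref{ques:deformed_C0_semig}, which we cannot answer directly, the strategy is to lift everything to the von Neumann algebraic picture and apply Lemma~\ref{lem:convolution_oper_converg_by_ext}. Each $\mathscr R_{\mu_t}$ admits its canonical normal completely positive extension $\overline{\mathscr R_{\mu_t}}$ to $\Linfty(\G)$, and $(\overline{\mathscr R_{\mu_t}})_{t\ge0}$ is a point--ultraweakly continuous semigroup (this is one of the equivalent forms of $w^*$-continuity of the convolution semigroup, see \cite[Lemma~2.3]{Skalski_Viselter__convolution_semigroups} or the proof of Theorem~\ref{thm:SkV_Thm_3_2_C_0}). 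Each $\overline{\mathscr R_{\mu_t}}$ still commutes with the left-right translation action $\Gamma^2\curvearrowright\Linfty(\G)$, so the von Neumann algebraic analogue of Proposition~\ref{prop:single map} (using the fixed-point description of the vN Landstad algebra together with the multiplicative-domain argument already employed in the proof) produces normal completely positive deformations $\overline{\mathscr R_{\mu_t}}^\Psi$ on the Landstad von Neumann algebra $\Linfty(\G)^{\overline\Psi\boxtimes\dot\Psi}$, which is just $\Linfty(\GPsi)$ acting on $\Ltwo(\G)$ after conjugation by $\pi^{\mathrm{can}}$; these normal maps extend the $\mathscr R_{\mu_t}^\Psi$ from $\Cz(\G)^{\overline\Psi\boxtimes\dot\Psi}$. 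Point--ultraweak continuity is preserved by the deformation procedure: given $\omega\in\Linfty(\G)_*$ and $x\in\Linfty(\G)^{\overline\Psi\boxtimes\dot\Psi}$, one writes $\omega(\overline{\mathscr R_{\mu_t}}^\Psi x)=\omega(\overline{\mathscr R_{\mu_t}}x)$ (using that the deformation is just the restriction of the extended map to the fixed-point subalgebra, via the concrete implementation of the deformed dual action by the unitaries $U_{\hat\gamma}$ in the multiplicative domain) and uses point--ultraweak continuity of $(\overline{\mathscr R_{\mu_t}})_{t\ge0}$. Lemma~\ref{lem:convolution_oper_converg_by_ext} then delivers $\mu_t^\Psi\xrightarrow[t\to0^+]{w^*}\mu_0^\Psi=\epsilon_{\GPsi}$, which together with the semigroup law upgrades to $w^*$-continuity everywhere.

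The remaining two statements are short: the invariance of each $\mu_t^\Psi$ under the adjoint action $\Gamma\curvearrowright\CzU(\GPsi)$ is just Proposition~\ref{prop:single_functional}\,\ref{enu:single_functional__2}\,\ref{enu:single_functional__2_b} applied pointwise in $t$, and the involutivity $(\mu_t^\Psi)^{\overline\Psi}=\mu_t$ comes from Proposition~\ref{prop:single_functional}\,\ref{enu:single_functional__2}\,\ref{enu:single_functional__2_c} applied to each $\mu_t$; since both statements are pointwise in $t$, no additional continuity argument is needed at this stage. The hardest piece will genuinely be the $w^*$-continuity, where the delicate point is verifying that the vN deformation of a point--ultraweakly continuous semigroup of normal maps remains point--ultraweakly continuous; everything else is a direct application of the machinery already established earlier in the paper.
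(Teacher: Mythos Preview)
Your proposal is correct and follows essentially the same strategy as the paper: Proposition~\ref{prop:single_functional} handles each fixed $t$, the composition-respecting clause of Proposition~\ref{prop:single map} gives the semigroup law, parts~\ref{enu:single_functional__2_b} and~\ref{enu:single_functional__2_c} give adjoint invariance and involutivity pointwise, and the $w^{*}$-continuity is obtained by a von Neumann algebraic detour ending in Lemma~\ref{lem:convolution_oper_converg_by_ext}.

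There is one minor but genuine slip in your continuity argument. You write ``given $\omega\in\Linfty(\G)_*$ and $x\in\Linfty(\G)^{\overline\Psi\boxtimes\dot\Psi}$, one writes $\omega(\overline{\mathscr R_{\mu_t}}^\Psi x)=\omega(\overline{\mathscr R_{\mu_t}}x)$''. This does not typecheck: $x$ lives in the crossed product, not in $\Linfty(\G)$, so neither $\omega\in\Linfty(\G)_*$ nor the undeformed $\overline{\mathscr R_{\mu_t}}$ can be applied to it. What you mean (and what your parenthetical suggests) is that $\overline{\mathscr R_{\mu_t}}^\Psi$ is the restriction to the Landstad fixed-point algebra of the canonical extension of $\overline{\mathscr R_{\mu_t}}$ to the \emph{von Neumann crossed product} $\Linfty(\G)\rtimes_\rho\Gamma^2$, and $\omega$ should be taken in the predual of that crossed product. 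The paper in fact exploits exactly this observation but in a slightly simpler way: it takes $M:=\Linfty(\G)\rtimes_\rho\Gamma^2$ directly in Lemma~\ref{lem:convolution_oper_converg_by_ext} (with $A=\Cz(\G)^{\overline\Psi\boxtimes\dot\Psi}\subseteq\M(\Cz(\G)\rtimes_\rho\Gamma^2)\subseteq M$), so it never needs to identify the von Neumann Landstad algebra with $\Linfty(\GPsi)$. Your route via $\Linfty(\GPsi)$ also works and is noted by the authors in the remark immediately following the proof, but it requires importing the von Neumann analogue of Theorem~\ref{thm:C0_G_Psi_crossed_prod} from \cite{Fima_Vainerman__twist_Rieffel_deform}, which the paper's proof avoids.
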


\begin{proof}
Proposition~\ref{prop:single_functional} implies that we are in
the situation where we can apply Proposition~\ref{prop:single map}
to each of the operators ${(\mathscr{R}_{\mu_{t}})}_{t\geq0}$, yielding
a family ${(\mathscr{R}_{\mu_{t}}^{\Psi})}_{t\geq0}$ of operators
on $\Cz(\G)^{\overline{\Psi}\boxtimes\dot{\Psi}}$ such that each
of the operators in ${(\pi^{\Psi}\circ\mathscr{R}_{\mu_{t}}^{\Psi}\circ{(\pi^{\Psi})}^{-1})}_{t\geq0}$
is the right convolution operator of some state $\mu_{t}^{\Psi}$
of $\CzU(\GPsi)$, which is invariant under the adjoint action $\Gamma\overset{\mathrm{ad}^{\mathrm{u}}}{\curvearrowright}\CzU(\G^{\Psi})$.
By the defining formula (\ref{eq:Sformula}), ${(\mathscr{R}_{\mu_{t}}^{\Psi})}_{t\geq0}$
is a semigroup: $\mathscr{R}_{\mu_{0}}^{\Psi}=\id_{\Cz(\G)^{\overline{\Psi}\boxtimes\dot{\Psi}}}$
and $\mathscr{R}_{\mu_{s}}^{\Psi}\circ\mathscr{R}_{\mu_{t}}^{\Psi}=\mathscr{R}_{\mu_{s+t}}^{\Psi}$
for all $s,t\ge0$.

The semigroup ${(\pi^{\Psi}\circ\mathscr{R}_{\mu_{t}}^{\Psi}\circ{(\pi^{\Psi})}^{-1})}_{t\geq0}$
on $\Cz(\GPsi)$ is induced by a convolution semigroup ${(\mu_{t}^{\Psi})}_{t\geq0}$
on $\GPsi$, and we have to show that ${(\mu_{t}^{\Psi})}_{t\geq0}$
is $w^{*}$-continuous. This will be done via a short detour to the
von Neumann algebraic setting, where the topological considerations
are simpler. Recall from Subsection~\ref{subsec:conv_semig_basics}
that each map $\mathscr{R}_{\mu}:\Cz(\G)\to\Cz(\G)$, $\mu\in\CzU(\G)^{*}$,
is actually a restriction of a normal map on $\Linfty(\G)$, and from
Subsection~\ref{subsec:Rieffel_deform_LCQGs} that the left and right
translation actions $\Gamma\curvearrowright\Cz(\G)$ are restrictions
of (von Neumann algebraic) actions on $\Linfty(\G)$ and hence so
is the left-right translation action $\Gamma^{2}\overset{\rho}{\curvearrowright}\Cz(\G)$.
We denote these extensions in the same way. The (extended) operators
${(\mathscr{R}_{\mu_{t}})}_{t\geq0}$ on $\Linfty(\G)$ form a $C_{0}^{*}$-semigroup
and commute with the left-right translation action $\Gamma^{2}\overset{\rho}{\curvearrowright}\Linfty(\G)$.
This $C_{0}^{*}$-semigroup induces -- by elementary von Neumann
algebra theory -- a $C_{0}^{*}$-semigroup ${(\overline{\mathscr{R}}_{\mu_{t}})}_{t\geq0}$
on the von Neumann algebraic crossed product $\Linfty(\G)\rtimes_{\rho}\Gamma^{2}$,
which canonically contains $\M(\Cz(\G)\rtimes_{\rho}\Gamma^{2})$
and thus also $\Cz(\G)^{\overline{\Psi}\boxtimes\dot{\Psi}}$. Evidently,
the maps ${(\overline{\mathscr{R}}_{\mu_{t}})}_{t\geq0}$ on $\Linfty(\G)\rtimes_{\rho}\Gamma^{2}$
extend the maps ${(\mathscr{R}_{\mu_{t}}^{\Psi})}_{t\geq0}$ on $\Cz(\G)^{\overline{\Psi}\boxtimes\dot{\Psi}}$.
By Lemma~\ref{lem:convolution_oper_converg_by_ext} (with $\G$ being
$\GPsi$, $A$ being $\Cz(\G)^{\overline{\Psi}\boxtimes\dot{\Psi}}$
and $M$ being $\Linfty(\G)\rtimes_{\rho}\Gamma^{2}$), the convolution
semigroup ${(\mu_{t}^{\Psi})}_{t\geq0}$ is $w^{*}$-continuous.

Finally the last statement follows immediately from Proposition~\ref{prop:single_functional}~\ref{enu:single_functional__2}~\ref{enu:single_functional__2_c}. 
\end{proof}
\begin{rem}
A more transparent way to show the $C_{0}$-continuity of ${(\mathscr{R}_{\mu_{t}}^{\Psi})}_{t\geq0}$
is to move everything to the von Neumann algebraic setting, especially
identifying $\Linfty(\GPsi)$ with a subalgebra of $\Linfty(\G)\rtimes_{\rho}\Gamma^{2}$,
and then use \cite[Lemma~2.3]{Skalski_Viselter__convolution_semigroups}
implication (c)$\implies$(a) in lieu of Lemma~\ref{lem:convolution_oper_converg_by_ext}.
The realisation of $\Linfty(\GPsi)$ inside $\Linfty(\G)\rtimes_{\rho}\Gamma^{2}$
is given in \cite[Proposition~5]{Fima_Vainerman__twist_Rieffel_deform},
which is the von Neumann analogue of Theorem~\ref{thm:C0_G_Psi_crossed_prod}.
We have not followed this line of proof in order to make the text,
which takes the \cst-algebraic perspective, as self-contained as
possible.
\end{rem}

Our next goal is to discuss the \emph{symmetry} of the convolution
semigroups of states on $\GPsi$ constructed in Theorem~\ref{thm:mainconv}.
We call a state of the universal \cst-algebra of a locally compact
quantum group \emph{symmetric} if it is invariant under the unitary
antipode. 

For a state $\mu\in\CzU(\G)^{*}$ that is invariant under the adjoint
action of $\Gamma$, we know from Proposition~\ref{prop:single_functional}
that the convolution operators $\mathscr{R}_{\mu},\mathscr{L}_{\mu}$
on $\Cz(\G)$ induce maps $\mathscr{R}_{\mu}^{\Psi},\mathscr{L}_{\mu}^{\Psi}$
on $\Cz(\G)^{\overline{\Psi}\boxtimes\dot{\Psi}}$, and the maps $\pi^{\Psi}\circ\mathscr{R}_{\mu}^{\Psi}\circ{(\pi^{\Psi})}^{-1}$
and $\pi^{\Psi}\circ\mathscr{L}_{\mu}^{\Psi}\circ{(\pi^{\Psi})}^{-1}$
on $\Cz(\GPsi)$ are the right and left convolution operators, respectively,
associated with two, \emph{a priori} possibly different, states on
$\CzU(\GPsi)$. The next proposition shows that, as expected, these
two states coincide, and they are symmetric if $\mu$ is.
\begin{prop}
\label{prop:G_Psi_induced_measures}Assume that $\G$ is co-amenable.
Let $\mu$ be a state of $\CzU(\G)$ that is invariant under the adjoint
action $\Gamma\overset{\mathrm{ad}^{\mathrm{u}}}{\curvearrowright}\CzU(\G)$.
Consider the non-degenerate completely positive maps $\pi^{\Psi}\circ\mathscr{R}_{\mu}^{\Psi}\circ{(\pi^{\Psi})}^{-1}$
and $\pi^{\Psi}\circ\mathscr{L}_{\mu}^{\Psi}\circ{(\pi^{\Psi})}^{-1}$
on $\Cz(\GPsi)$ constructed in Proposition~\ref{prop:single_functional}.
Then 
\begin{equation}
\epsilon_{\GPsi}\circ\pi^{\Psi}\circ\mathscr{L}_{\mu}^{\Psi}\circ{(\pi^{\Psi})}^{-1}=\epsilon_{\GPsi}\circ\pi^{\Psi}\circ\mathscr{R}_{\mu}^{\Psi}\circ{(\pi^{\Psi})}^{-1}\label{eq:G_Psi_induced_measures__1}
\end{equation}
 and 
\begin{equation}
\epsilon_{\GPsi}\circ\pi^{\Psi}\circ\mathscr{R}_{\mu}^{\Psi}\circ{(\pi^{\Psi})}^{-1}\circ\Rant_{\GPsi}=\epsilon_{\GPsi}\circ\pi^{\Psi}\circ\mathscr{L}_{\mu\circ\Rant_{\G}}^{\Psi}\circ{(\pi^{\Psi})}^{-1}=\epsilon_{\GPsi}\circ\pi^{\Psi}\circ\mathscr{R}_{\mu\circ\Rant_{\G}}^{\Psi}\circ{(\pi^{\Psi})}^{-1}.\label{eq:G_Psi_induced_measures__2}
\end{equation}
In particular, if $\mu$ is symmetric, then 
\begin{equation}
\begin{split}\epsilon_{\GPsi}\circ\pi^{\Psi}\circ\mathscr{R}_{\mu}^{\Psi}\circ{(\pi^{\Psi})}^{-1}\circ\Rant_{\GPsi} & =\epsilon_{\GPsi}\circ\pi^{\Psi}\circ\mathscr{R}_{\mu}^{\Psi}\circ{(\pi^{\Psi})}^{-1}\\
 & =\epsilon_{\GPsi}\circ\pi^{\Psi}\circ\mathscr{L}_{\mu}^{\Psi}\circ{(\pi^{\Psi})}^{-1}=\epsilon_{\GPsi}\circ\pi^{\Psi}\circ\mathscr{L}_{\mu}^{\Psi}\circ{(\pi^{\Psi})}^{-1}\circ\Rant_{\GPsi}.
\end{split}
\label{eq:G_Psi_induced_measures__3}
\end{equation}
\end{prop}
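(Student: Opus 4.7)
The three displayed equalities are proved in order, with \eqref{eq:G_Psi_induced_measures__3} following from \eqref{eq:G_Psi_induced_measures__1} and \eqref{eq:G_Psi_induced_measures__2} by straightforward combination: for symmetric $\mu$ the two middle terms of \eqref{eq:G_Psi_induced_measures__2} become (in notation implicit in the proof of Proposition~\ref{prop:single_functional}) $\mu^{\Psi,L}$ and $\mu^{\Psi,R}$, which coincide by \eqref{eq:G_Psi_induced_measures__1}, while the remaining fourth term is handled by the left-sided analogue of \eqref{eq:G_Psi_induced_measures__2}.

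The crux of \eqref{eq:G_Psi_induced_measures__1} is the non-deformed identity $\Pi\circ\mathscr{R}_{\mu}=\Pi\circ\mathscr{L}_{\mu}$ on $\Cz(\G)$, where $\Pi\in\Mor(\Cz(\G),\Cz(\Gamma))$ is the strong quantum homomorphism. This identity is in fact equivalent to the adjoint-invariance hypothesis: using the slice formulas $\Pi\mathscr{R}_{\mu}(x)=(\mu\tensor\i)\alpha^{R}(x)$ and $\Pi\mathscr{L}_{\mu}(x)=(\i\tensor\mu)\alpha^{L}(x)$, together with the descriptions $\alpha^{R}(x)(\gamma)=\rho^{R}_{\gamma}(x)$ and $\alpha^{L}(x)(\gamma)=\rho^{L}_{-\gamma}(x)$ from Subsection~\ref{subsec:Rieffel_deform_LCQGs}, the two sides become the functions $\gamma\mapsto\mu(\rho^{R}_{\gamma}(x))$ and $\gamma\mapsto\mu(\rho^{L}_{-\gamma}(x))$, which agree precisely when $\mu\circ\mathrm{ad}_{\gamma}=\mu$ for all $\gamma\in\Gamma$. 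By the defining formula $(\tilde{S}f)(\gamma)=S(f(\gamma))$ for canonical crossed-product extensions (Proposition~\ref{prop:single map}), this pointwise identity lifts to $\tilde{\Pi}\circ\tilde{\mathscr{R}}_{\mu}=\tilde{\Pi}\circ\tilde{\mathscr{L}}_{\mu}$ on $\Cz(\G)\rtimes_{\rho}\Gamma^{2}$, and restricts to $\Pi^{\Psi}\circ\mathscr{R}_{\mu}^{\Psi}=\Pi^{\Psi}\circ\mathscr{L}_{\mu}^{\Psi}$ on the Landstad subalgebra. Composing with $\epsilon_{\Gamma}\circ\pi_{\Gamma}^{\Psi}$ and invoking the formula $\epsilon_{\GPsi}\circ\pi^{\Psi}=\epsilon_{\Gamma}\circ\pi_{\Gamma}^{\Psi}\circ\Pi^{\Psi}$ from Proposition~\ref{prop:G_Psi_co_unit} then yields \eqref{eq:G_Psi_induced_measures__1}.

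For \eqref{eq:G_Psi_induced_measures__2}, the second equality is a direct application of \eqref{eq:G_Psi_induced_measures__1} to $\mu\circ\Rant^{\mathrm{u}}$ in place of $\mu$; the latter is again adjoint-invariant because $\Rant$ intertwines $\rho^{L}_{\gamma}$ and $\rho^{R}_{\gamma}$, via a direct computation using $\Rant=\hat{J}(\cdot)^{*}\hat{J}$, $R_{\gamma}=\hat{J}L_{\gamma}\hat{J}$, and the commutativity of $\rho^{L}$ and $\rho^{R}$. For the first equality, begin with the standard LCQG identity $\mathscr{R}_{\mu}\circ\Rant=\Rant\circ\mathscr{L}_{\mu\circ\Rant}$ on $\Cz(\G)$, which is a direct consequence of \eqref{eq:Delta_R}. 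Combined with the explicit description of $\tilde{\Rant}$ from Proposition~\ref{prop:R_G_Psi} (acting as $\Rant$ on $\iota(\Cz(\G))$ and mapping $\lambda_{\gamma_{1},\gamma_{2}}\mapsto\lambda_{-\gamma_{2},-\gamma_{1}}$), and the fact that $\tilde{\mathscr{R}}_{\mu}$ and $\tilde{\mathscr{L}}_{\mu\circ\Rant}$ fix the $\lambda$-elements, this yields the crossed-product identity $\tilde{\mathscr{R}}_{\mu}\circ\tilde{\Rant}=\tilde{\Rant}\circ\tilde{\mathscr{L}}_{\mu\circ\Rant}$ on $\Cz(\G)\rtimes_{\rho}\Gamma^{2}$. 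Restricting to the Landstad algebra, transporting via $\pi^{\Psi}$ so that $\tilde{\Rant}$ becomes $\Rant_{\GPsi}$ (Proposition~\ref{prop:R_G_Psi} again), and invoking $\epsilon_{\GPsi}\circ\Rant_{\GPsi}=\epsilon_{\GPsi}$ (valid since $\GPsi$ is co-amenable) completes the derivation.

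The main conceptual step is the non-deformed identity $\Pi\circ\mathscr{R}_{\mu}=\Pi\circ\mathscr{L}_{\mu}$ for adjoint-invariant $\mu$; once this is secured, the rest is bookkeeping -- propagating the relevant equalities through the canonical crossed-product extensions and reading off the desired identities via the explicit formulas for $\epsilon_{\GPsi}$ and $\Rant_{\GPsi}$ on the Landstad algebra furnished by Propositions~\ref{prop:G_Psi_co_unit} and \ref{prop:R_G_Psi}.
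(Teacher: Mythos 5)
Your proposal is correct and follows essentially the same route as the paper's proof: establish $\Pi\circ\mathscr{R}_{\mu}=\Pi\circ\mathscr{L}_{\mu}$ from adjoint-invariance, propagate it through the crossed-product extensions and compose with $\epsilon_{\Gamma}\circ\pi_{\Gamma}^{\Psi}\circ\Pi^{\Psi}$ via Proposition~\ref{prop:G_Psi_co_unit} for \eqref{eq:G_Psi_induced_measures__1}; then use $\mathscr{R}_{\mu}\circ\Rant_{\G}=\Rant_{\G}\circ\mathscr{L}_{\mu\circ\Rant_{\G}}$ lifted to $\tilde{\Rant}$ (Proposition~\ref{prop:R_G_Psi}) together with the $\Rant_{\GPsi}$-invariance of $\epsilon_{\GPsi}$ for \eqref{eq:G_Psi_induced_measures__2}, with \eqref{eq:G_Psi_induced_measures__3} by combination. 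The only cosmetic difference is that you spell out the slice-formula derivation of $\Pi\circ\mathscr{R}_{\mu}=\Pi\circ\mathscr{L}_{\mu}$ where the paper cites the corresponding identities from the literature.
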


\begin{proof}
We consider the following maps:
\begin{itemize}
\item the non-degenerate completely positive maps $\widetilde{\mathscr{R}_{\mu}},\widetilde{\mathscr{L}_{\mu}}$
on $\Cz(\G)\rtimes_{\rho}\Gamma^{2}$ induced by $\mathscr{R}_{\mu},\mathscr{L}_{\mu}$,
which restrict (after extension) to the non-degenerate completely
positive maps $\mathscr{R}_{\mu}^{\Psi},\mathscr{L}_{\mu}^{\Psi}$
on $\Cz(\G)^{\overline{\Psi}\boxtimes\dot{\Psi}}$ and yield the convolution
operators $\pi^{\Psi}\circ\mathscr{R}_{\mu}^{\Psi}\circ{(\pi^{\Psi})}^{-1}$
and $\pi^{\Psi}\circ\mathscr{L}_{\mu}^{\Psi}\circ{(\pi^{\Psi})}^{-1}$
on $\Cz(\GPsi)$, respectively (Proposition~\ref{prop:single_functional});
\item the morphism $\tilde{\Pi}\in\Mor(\Cz(\G)\rtimes_{\rho}\Gamma^{2},\Cz(\Gamma)\rtimes_{\rho_{\Gamma}}\Gamma^{2})$
induced by the strong quantum homomorphism $\Pi:\Cz(\G)\to\Cz(\Gamma)$,
which restricts to a morphism $\Pi^{\Psi}\in\Mor(\Cz(\G)^{\overline{\Psi}\boxtimes\dot{\Psi}},\Cz(\Gamma)^{\overline{\Psi}\boxtimes\dot{\Psi}})$
such that $\epsilon_{\GPsi}=\epsilon_{\Gamma}\circ\pi_{\Gamma}^{\Psi}\circ\Pi^{\Psi}\circ{(\pi^{\Psi})}^{-1}$
(Proposition~\ref{prop:G_Psi_co_unit});
\item the anti-automorphism $\tilde{\Rant}$ of $\Cz(\G)\rtimes_{\rho}\Gamma^{2}$,
which satisfies $\tilde{\Rant}(\Cz(\G)^{\overline{\Psi}\boxtimes\dot{\Psi}})=\Cz(\G)^{\overline{\Psi}\boxtimes\dot{\Psi}}$
and $\tilde{\Rant}|_{\Cz(\G)^{\overline{\Psi}\boxtimes\dot{\Psi}}}={(\pi^{\Psi})}^{-1}\circ\Rant_{\GPsi}\circ\pi^{\Psi}$
(Proposition~\ref{prop:R_G_Psi}).
\end{itemize}
As mentioned in Subsection~\ref{subsec:Rieffel_deform_LCQGs}, $\rho^{L},\rho^{R}$
are the left and right, respectively, quantum homomorphisms associated
with $\Gamma$ being a closed quantum subgroup of $\G$. The formulas
that relate these quantum homomorphisms to $\Pi$ \cite[(2.1) and (2.2)]{Brannan_Chirvasitu_Viselter__act_quo_lat}
and the assumption that $\mu$ is invariant under the adjoint action
of $\Gamma$ imply that $\Pi\circ\mathscr{R}_{\mu}=\Pi\circ\mathscr{L}_{\mu}$.
Therefore, $\Pi^{\Psi}\circ\mathscr{R}_{\mu}^{\Psi}=(\Pi\circ\mathscr{R}_{\mu})^{\Psi}=(\Pi\circ\mathscr{L}_{\mu})^{\Psi}=\Pi^{\Psi}\circ\mathscr{L}_{\mu}^{\Psi}$
. Just as in (\ref{eq:mu_Psi}), by the formula $\epsilon_{\GPsi}=\epsilon_{\Gamma}\circ\pi_{\Gamma}^{\Psi}\circ\Pi^{\Psi}\circ{(\pi^{\Psi})}^{-1}$
we deduce formula (\ref{eq:G_Psi_induced_measures__1}) because 
\[
\begin{split}\epsilon_{\GPsi}\circ\pi^{\Psi}\circ\mathscr{L}_{\mu}^{\Psi}\circ{(\pi^{\Psi})}^{-1} & =\epsilon_{\Gamma}\circ\pi_{\Gamma}^{\Psi}\circ\Pi^{\Psi}\circ\mathscr{L}_{\mu}^{\Psi}\circ{(\pi^{\Psi})}^{-1}\\
 & =\epsilon_{\Gamma}\circ\pi_{\Gamma}^{\Psi}\circ\Pi^{\Psi}\circ\mathscr{R}_{\mu}^{\Psi}\circ{(\pi^{\Psi})}^{-1}=\epsilon_{\GPsi}\circ\pi^{\Psi}\circ\mathscr{R}_{\mu}^{\Psi}\circ{(\pi^{\Psi})}^{-1}.
\end{split}
\]
Also $\mu\circ\Rant_{\G}$ is invariant under the adjoint action of
$\Gamma$ because $\Rant_{\G}$ intertwines the actions $\rho^{L}$
and $\rho^{R}$. Applying (\ref{eq:G_Psi_induced_measures__1}) to
$\mu\circ\Rant_{\G}$ proves the second equality in (\ref{eq:G_Psi_induced_measures__2}).

Observe that $\mathscr{R}_{\mu}\circ\Rant_{\G}=\Rant_{\G}\circ\mathscr{L}_{\mu\circ\Rant_{\G}}$.
This implies that $\widetilde{\mathscr{R}_{\mu}}\circ\tilde{\Rant}=\tilde{\Rant}\circ\widetilde{\mathscr{L}_{\mu\circ\Rant_{\G}}}$,
because for each $f\in\Cc(\Gamma^{2},\Cz(\G))$ we have 
\[
\begin{split}(\widetilde{\mathscr{R}_{\mu}}\circ\tilde{\Rant})\left(\int_{\Gamma^{2}}\iota(f(\gamma_{1},\gamma_{2}))\lambda_{\gamma_{1},\gamma_{2}}\d(\gamma_{1},\gamma_{2})\right) & =\widetilde{\mathscr{R}_{\mu}}\left(\int_{\Gamma^{2}}\lambda_{-\gamma_{2},-\gamma_{1}}\iota(\Rant_{\G}(f(\gamma_{1},\gamma_{2})))\d(\gamma_{1},\gamma_{2})\right)\\
 & =\int_{\Gamma^{2}}\lambda_{-\gamma_{2},-\gamma_{1}}\iota((\mathscr{R}_{\mu}\circ\Rant_{\G})(f(\gamma_{1},\gamma_{2})))\d(\gamma_{1},\gamma_{2})\\
 & =\int_{\Gamma^{2}}\lambda_{-\gamma_{2},-\gamma_{1}}\iota((\Rant_{\G}\circ\mathscr{L}_{\mu\circ\Rant_{\G}})(f(\gamma_{1},\gamma_{2})))\d(\gamma_{1},\gamma_{2})\\
 & =\tilde{\Rant}\left(\int_{\Gamma^{2}}\iota(\mathscr{L}_{\mu\circ\Rant_{\G}}(f(\gamma_{1},\gamma_{2})))\lambda_{\gamma_{1},\gamma_{2}}\d(\gamma_{1},\gamma_{2})\right)\\
 & =(\tilde{\Rant}\circ\widetilde{\mathscr{L}_{\mu\circ\Rant_{\G}}})\left(\int_{\Gamma^{2}}\iota(f(\gamma_{1},\gamma_{2}))\lambda_{\gamma_{1},\gamma_{2}}\d(\gamma_{1},\gamma_{2})\right).
\end{split}
\]
By restricting to $\Cz(\G)^{\overline{\Psi}\boxtimes\dot{\Psi}}$
we obtain $\mathscr{R}_{\mu}^{\Psi}\circ\tilde{\Rant}|_{\Cz(\G)^{\overline{\Psi}\boxtimes\dot{\Psi}}}=\tilde{\Rant}|_{\Cz(\G)^{\overline{\Psi}\boxtimes\dot{\Psi}}}\circ\mathscr{L}_{\mu\circ\Rant_{G}}^{\Psi}$.
Consequently, using the fact that $\epsilon_{\GPsi}$ is invariant
under $\Rant_{\GPsi}$, we get 
\[
\begin{split}\epsilon_{\GPsi}\circ\pi^{\Psi}\circ\mathscr{R}_{\mu}^{\Psi}\circ{(\pi^{\Psi})}^{-1}\circ\Rant_{\GPsi} & =\epsilon_{\GPsi}\circ\pi^{\Psi}\circ\mathscr{R}_{\mu}^{\Psi}\circ\tilde{\Rant}|_{\Cz(\G)^{\overline{\Psi}\boxtimes\dot{\Psi}}}\circ{(\pi^{\Psi})}^{-1}\\
 & =\epsilon_{\GPsi}\circ\pi^{\Psi}\circ\tilde{\Rant}|_{\Cz(\G)^{\overline{\Psi}\boxtimes\dot{\Psi}}}\circ\mathscr{L}_{\mu\circ\Rant_{G}}^{\Psi}\circ{(\pi^{\Psi})}^{-1}\\
 & =\epsilon_{\GPsi}\circ\Rant_{\GPsi}\circ\pi^{\Psi}\circ\mathscr{L}_{\mu\circ\Rant_{G}}^{\Psi}\circ{(\pi^{\Psi})}^{-1}\\
 & =\epsilon_{\GPsi}\circ\pi^{\Psi}\circ\mathscr{L}_{\mu\circ\Rant_{\G}}^{\Psi}\circ{(\pi^{\Psi})}^{-1}.
\end{split}
\]
This completes the proof of (\ref{eq:G_Psi_induced_measures__2}).
Now (\ref{eq:G_Psi_induced_measures__3}) follows easily if $\mu$
is symmetric, that is, $\mu=\mu\circ\Rant_{\G}$.
\end{proof}
The next theorem, which is the second main result of this paper, follows
as a direct corollary of Proposition~\ref{prop:G_Psi_induced_measures}.
We stress again that the co-amenability assumption might be unnecessary.
\begin{thm}
\label{thm:symmetry}Assume that $\G$ is co-amenable. Under the assumptions
of Theorem~\ref{thm:mainconv}, if each state in the convolution
semigroup ${(\mu_{t})}_{t\geq0}$ on $\G$ is symmetric (i.e., $\Rant$-invariant),
then each state in the induced convolution semigroup ${(\mu_{t}^{\Psi})}_{t\geq0}$
on $\GPsi$ is symmetric (i.e., $\Rant_{\GPsi}$-invariant).
\end{thm}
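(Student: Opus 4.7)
The plan is that Theorem~\ref{thm:symmetry} should drop out of Proposition~\ref{prop:G_Psi_induced_measures} almost immediately once we interpret the co-unit composed with a convolution operator as recovering the underlying state. So my proof would simply fix $t\geq 0$, apply Proposition~\ref{prop:G_Psi_induced_measures} to the symmetric state $\mu_t$, and read off the symmetry of $\mu_t^{\Psi}$.

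Concretely, I would first invoke Proposition~\ref{prop:G_Psi_co_unit}, which tells us that $\GPsi$ is also co-amenable, so that the canonical surjection $\Cz^{\mathrm{u}}_0(\GPsi)\to\Cz(\GPsi)$ is an isomorphism and the universal unitary antipode of $\GPsi$ descends to $\Rant_{\GPsi}$ on $\Cz(\GPsi)$. Under this identification, symmetry of $\mu_t^{\Psi}$ as a state on $\Cz^{\mathrm{u}}_0(\GPsi)$ reduces to checking $\mu_t^{\Psi}\circ\Rant_{\GPsi}=\mu_t^{\Psi}$ at the reduced level. The next ingredient is the general identity $\nu=\epsilon_{\GPsi}\circ\mathscr{R}_{\nu}$ on $\Cz(\GPsi)$ for any state $\nu$ of $\Cz^{\mathrm{u}}_0(\GPsi)$, which follows from the co-unit property $(\i\tensor\epsilon_{\GPsi})\circ\Delta^{\text{u-r}}_{\GPsi}=\i$ applied to $\mathscr{R}_{\nu}=(\nu\tensor\i)\circ\Delta^{\text{u-r}}_{\GPsi}$ together with co-amenability. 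Combining this with the defining property $\mathscr{R}_{\mu_t^{\Psi}}=\pi^{\Psi}\circ\mathscr{R}_{\mu_t}^{\Psi}\circ{(\pi^{\Psi})}^{-1}$ from Theorem~\ref{thm:mainconv} we obtain
\[
\mu_t^{\Psi}=\epsilon_{\GPsi}\circ\pi^{\Psi}\circ\mathscr{R}_{\mu_t}^{\Psi}\circ{(\pi^{\Psi})}^{-1}.
\]

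Finally, since by hypothesis each $\mu_t$ is symmetric, formula \eqref{eq:G_Psi_induced_measures__3} of Proposition~\ref{prop:G_Psi_induced_measures} applied to $\mu=\mu_t$ yields
\[
\epsilon_{\GPsi}\circ\pi^{\Psi}\circ\mathscr{R}_{\mu_t}^{\Psi}\circ{(\pi^{\Psi})}^{-1}\circ\Rant_{\GPsi}=\epsilon_{\GPsi}\circ\pi^{\Psi}\circ\mathscr{R}_{\mu_t}^{\Psi}\circ{(\pi^{\Psi})}^{-1},
\]
which by the previous paragraph is precisely $\mu_t^{\Psi}\circ\Rant_{\GPsi}=\mu_t^{\Psi}$, as required.

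There is no real obstacle here beyond bookkeeping: all the analytic content is already inside Proposition~\ref{prop:G_Psi_induced_measures} (which uses Proposition~\ref{prop:R_G_Psi} to lift $\Rant$ to the crossed product and Proposition~\ref{prop:G_Psi_co_unit} to describe $\epsilon_{\GPsi}$). The only point that needs a line of care is the passage between the universal and reduced pictures of symmetry, and this is handled cleanly by the co-amenability of $\GPsi$, which is exactly where the hypothesis on $\G$ is consumed. If one wished to remove co-amenability one would instead need a description of $\Cz^{\mathrm{u}}_0(\GPsi)$ as a Rieffel-type deformation of $\Cz^{\mathrm{u}}_0(\G)$, matching the remark made before Proposition~\ref{prop:G_Psi_co_unit}.
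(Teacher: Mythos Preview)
Your proposal is correct and takes essentially the same approach as the paper, which simply states that the theorem ``follows as a direct corollary of Proposition~\ref{prop:G_Psi_induced_measures}.'' You have merely made explicit the bookkeeping the paper leaves to the reader: that co-amenability passes to $\GPsi$ via Proposition~\ref{prop:G_Psi_co_unit}, that $\mu_t^{\Psi}=\epsilon_{\GPsi}\circ\mathscr{R}_{\mu_t^{\Psi}}$ by the co-unit identity, and that \eqref{eq:G_Psi_induced_measures__3} then gives $\mu_t^{\Psi}\circ\Rant_{\GPsi}=\mu_t^{\Psi}$.
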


\subsection{The commutative case\label{subsec:commutative}}

In this subsection we focus on the particular case that $\G=G$ is
a locally compact \emph{group}; as before, $\Gamma$ is an abelian
closed subgroup of $G$ and $\Psi$ is a normalised continuous $2$-cocycle
on $\hat{\Gamma}$. We write $\Rieffel G{\Psi}$ for $\GPsi$ and
begin with a few observations.
\begin{observation}
\label{obs:G_Psi_not_comm}In order for the locally compact quantum
group $\Rieffel G{\Psi}$ to be of interest it must not be commutative,
equivalently, its dual $\widehat{\Rieffel G{\Psi}}=\hat{G}_{\Psi}$
should not be co-commutative. This means that, with $\Omega:=(L\tensor L)(\Psi)\in\VN(G)\tensorn\VN(G)$
as usual, we should not have $\hat{\Delta}_{\Omega}=\sigma\circ\hat{\Delta}_{\Omega}$,
where $\sigma$ is the flip operator and $\hat{\Delta}_{\Omega}=(\Ad\Omega)\circ\hat{\Delta}$
as in Subsection~\ref{subsec:cocycle_twisting}. Equivalently, the
unitary $\Omega\sigma(\Omega)^{*}\in\VN(G)\tensorn\VN(G)$ should
not belong to the commutant of $\left\{ \lambda_{g}^{G}\tensor\lambda_{g}^{G}\in\VN(G)\tensorn\VN(G):g\in G\right\} $.
\end{observation}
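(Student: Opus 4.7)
The plan is to chase the equivalences one at a time, treating the observation as a straightforward consequence of the definition of $\hat{\Delta}_\Omega$ together with the co-commutativity of the group von Neumann algebra $\VN(G)$. No deep input is needed.

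First I would invoke the standard Pontryagin-type duality for locally compact quantum groups: a LCQG is commutative if and only if its dual is co-commutative. Since by construction $\widehat{\Rieffel G{\Psi}}=\hat{G}_{\Psi}=(\VN(G),\hat{\Delta}_{\Omega})$, commutativity of $\Rieffel G{\Psi}$ is thus equivalent to the identity $\sigma\circ\hat{\Delta}_{\Omega}=\hat{\Delta}_{\Omega}$ on $\VN(G)$, which is the first claim.

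Second, I would use that $G$ is a classical group to get $\hat{\Delta}$ itself co-commutative: on the canonical generators, $\hat{\Delta}(\lambda_{g}^{G})=\lambda_{g}^{G}\tensor\lambda_{g}^{G}$ is manifestly $\sigma$-fixed, hence $\sigma\circ\hat{\Delta}=\hat{\Delta}$ on all of $\VN(G)$. Substituting $\hat{\Delta}_{\Omega}=(\Ad\Omega)\circ\hat{\Delta}$ and applying $\sigma$ (a $*$-automorphism of $\VN(G)\tensorn\VN(G)$) turns the identity $\sigma\circ\hat{\Delta}_{\Omega}=\hat{\Delta}_{\Omega}$ into
\[
\sigma(\Omega)\,\hat{\Delta}(x)\,\sigma(\Omega)^{*}=\Omega\,\hat{\Delta}(x)\,\Omega^{*}\qquad(\forall_{x\in\VN(G)}).
\]

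Third, a short rearrangement (multiplying appropriately on both sides by $\Omega^{*}$ and $\sigma(\Omega)$, or equivalently taking adjoints/flips which preserve the relevant commutant) shows that this in turn is equivalent to the unitary $\Omega\sigma(\Omega)^{*}$ commuting with every element of $\hat{\Delta}(\VN(G))$. Finally, since $\hat{\Delta}(\VN(G))$ is precisely the von Neumann algebra generated by $\{\lambda_{g}^{G}\tensor\lambda_{g}^{G}:g\in G\}$, its commutant equals the commutant of that set, giving the stated form of the observation.

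The only thing requiring any care is the algebraic rearrangement in the third step, where one must keep track of whether $\Omega\sigma(\Omega)^{*}$, $\Omega^{*}\sigma(\Omega)$, or one of their $\sigma$-images naturally appears; this is harmless because the set $\{\lambda_{g}^{G}\tensor\lambda_{g}^{G}:g\in G\}$ is visibly $*$-closed and $\sigma$-invariant, so its commutant is stable under both $*$ and $\sigma$, and one may pick any of the four equivalent forms to state the conclusion.
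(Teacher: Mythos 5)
Your overall route is the right one (the paper states this as an Observation with no proof, and the chain of equivalences is exactly as you describe through step two), but the justification of your final step contains a genuine error. The rearrangement of $\sigma(\Omega)\hat{\Delta}(x)\sigma(\Omega)^{*}=\Omega\hat{\Delta}(x)\Omega^{*}$ naturally produces the condition that $\Omega^{*}\sigma(\Omega)$ (equivalently its adjoint $\sigma(\Omega)^{*}\Omega$) lies in $\hat{\Delta}(\VN(G))'=\{\lambda_{g}^{G}\tensor\lambda_{g}^{G}:g\in G\}'$. The element appearing in the statement is $\Omega\sigma(\Omega)^{*}$, and your claim that all four variants are interchangeable because the commutant is stable under $*$ and $\sigma$ is false: the involution and the flip each preserve the two pairs $\{\Omega^{*}\sigma(\Omega),\,\sigma(\Omega)^{*}\Omega\}$ and $\{\Omega\sigma(\Omega)^{*},\,\sigma(\Omega)\Omega^{*}\}$ separately (e.g.\ $\sigma(\Omega^{*}\sigma(\Omega))=\sigma(\Omega)^{*}\Omega$ and $(\Omega\sigma(\Omega)^{*})^{*}=\sigma(\Omega)\Omega^{*}$), so neither operation carries one pair into the other. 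For a general $2$-cocycle $\Omega$ on a general dual, $\Omega\sigma(\Omega)^{*}=\Ad(\Omega)\bigl(\sigma(\Omega)^{*}\Omega\bigr)$ need not lie in the commutant just because $\sigma(\Omega)^{*}\Omega$ does.

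The missing ingredient is specific to the present setting: $\Omega=(L\tensor L)(\Psi)$ and $\sigma(\Omega)=(L\tensor L)(\sigma(\Psi))$ both lie in the \emph{abelian} von Neumann algebra $(L\tensor L)\bigl(\Linfty(\hat{\Gamma}\times\hat{\Gamma})\bigr)$, hence commute with each other, so that $\Omega\sigma(\Omega)^{*}=\sigma(\Omega)^{*}\Omega$ literally as elements and the paper's form of the condition coincides with the one your rearrangement yields. With that one sentence added, your argument is complete; without it, the passage from $\Omega^{*}\sigma(\Omega)$ to $\Omega\sigma(\Omega)^{*}$ is unjustified.
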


\begin{observation}
Denote again by $Z_{G}(\Gamma)$ the centraliser of $\Gamma$ inside
$G$. A computation similar to (\ref{eq:Gamma_le_G_Psi}) shows that
the canonical embedding $L_{Z_{G}(\Gamma)}$ of $\Linfty(\widehat{Z_{G}(\Gamma)})=\VN(Z_{G}(\Gamma))$
inside $\Linfty(\hat{G})=\VN(G)$ turns $Z_{G}(\Gamma)$ into a closed
quantum subgroup of $\Rieffel G{\Psi}$. Furthermore, one can repeat
the proof of Proposition~\ref{prop:G_Psi_co_unit} to show that the
restriction map $\Pi_{Z_{G}(\Gamma)}:\Cz(G)\to\Cz(Z_{G}(\Gamma))$
induces a morphism $\Pi_{Z_{G}(\Gamma)}^{\Psi}\in\Mor(\Cz(G)^{\overline{\Psi}\boxtimes\dot{\Psi}},\Cz(Z_{G}(\Gamma))^{\overline{\Psi}\boxtimes\dot{\Psi}})$
and the morphism $\pi_{Z_{G}(\Gamma)}^{\Psi}\circ\Pi_{Z_{G}(\Gamma)}^{\Psi}\circ{(\pi^{\Psi})}^{-1}\in\Mor(\Cz(\Rieffel G{\Psi}),\Cz(\Rieffel{Z_{G}(\Gamma)}{\Psi}=Z_{G}(\Gamma)))$
is the strong quantum homomorphism associated to $Z_{G}(\Gamma)$
being a closed quantum subgroup of $\Rieffel G{\Psi}$, where the
equality $\Rieffel{Z_{G}(\Gamma)}{\Psi}=Z_{G}(\Gamma)$ follows from
Proposition~\ref{prop:Gamma_fact_thru_center}. In addition, the
co-unit $\epsilon_{\Rieffel G{\Psi}}$ of $\Rieffel G{\Psi}$ equals
$\epsilon_{Z_{G}(\Gamma)}\circ\pi_{Z_{G}(\Gamma)}^{\Psi}\circ\Pi_{Z_{G}(\Gamma)}^{\Psi}\circ{(\pi^{\Psi})}^{-1}$.

Consequently, repeating the proof of Corollary~\ref{cor:ext_of_prob_meas_on_Gamma}
shows that for every probability measure $\mu$ on $G$ that is supported
by $Z_{G}(\Gamma)$, the `deformed' state $\mu^{\Psi}$ of $\Cz(\Rieffel G{\Psi})$
given by Proposition~\ref{prop:single_functional}~\ref{enu:single_functional__2}
is also `supported by $Z_{G}(\Gamma)$', i.e., it factors through
the strong quantum homomorphism associated to $Z_{G}(\Gamma)$ being
a closed quantum subgroup of $\Rieffel G{\Psi}$. This strengthens
Remark~\ref{rem:single_functional_trivial}~\ref{enu:rem:single_functional_trivial__1},
and means that when seeking in the commutative case examples of the
results of the previous subsection in which the convolution semigroup
of measures cannot be `transported directly' to the Rieffel deformation,
we should avoid measures supported by the centraliser $Z_{G}(\Gamma)$.
\end{observation}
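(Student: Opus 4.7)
The plan is to verify in turn the four assertions packaged in the observation. First, to show that $L_{Z_G(\Gamma)}\colon \VN(Z_G(\Gamma))\to\VN(G)$ intertwines the co-multiplication $\Delta_{\widehat{Z_G(\Gamma)}}$ of $\VN(Z_G(\Gamma))$ with the twisted one $\hat{\Delta}_\Psi=\Ad(\Omega)\circ\hat{\Delta}$ on $\VN(G)$, I would note that $L_{Z_G(\Gamma)}$ is already an intertwiner for the untwisted co-multiplications, so the question reduces to showing that $\Omega$ commutes with $(L_{Z_G(\Gamma)}\tensor L_{Z_G(\Gamma)})(\Delta_{\widehat{Z_G(\Gamma)}}(x))$ for every $x\in\VN(Z_G(\Gamma))$. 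By $\sigma$-weak density of $\linspan\{\lambda_g^{Z_G(\Gamma)}:g\in Z_G(\Gamma)\}$ in $\VN(Z_G(\Gamma))$ and normality, it is enough to check commutation on the group-like generators $\lambda_g^G\tensor\lambda_g^G$, $g\in Z_G(\Gamma)$. This is immediate since $\Omega=(L\tensor L)(\Psi)$ lies in the $\sigma$-weak closure of the linear span of operators $\lambda_{\gamma_1}^G\tensor\lambda_{\gamma_2}^G$ with $\gamma_1,\gamma_2\in\Gamma$, and each such operator commutes with $\lambda_g^G\tensor\lambda_g^G$ by the very definition of the centraliser.

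Next, to construct $\Pi_{Z_G(\Gamma)}^\Psi$ and identify it with the strong quantum homomorphism, I would mimic the proof of Proposition~\ref{prop:G_Psi_co_unit} with $Z_G(\Gamma)$ playing the role of $\Gamma$ as the closed quantum subgroup (while $\Gamma$ continues to carry the $2$-cocycle $\Psi$). The required equivariance is that the restriction morphism $\Pi_{Z_G(\Gamma)}\colon\Cz(G)\to\Cz(Z_G(\Gamma))$ intertwines the left-right translation actions of $\Gamma^2$ on $\Cz(G)$ (induced by $\Gamma\le G$) and on $\Cz(Z_G(\Gamma))$ (induced by $\Gamma\le Z_G(\Gamma)$); this is transparent at the level of group multiplication and transfers to the actions $\rho^L,\rho^R$ via the analogue of (\ref{eq:Pi_L}). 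Hence $(\Pi_{Z_G(\Gamma)},\id)$ is a morphism of deformation data in the sense of \cite[Definition~3.7]{Kasprzak__Rieffel_deform_crossed_prod}, yielding $\Pi_{Z_G(\Gamma)}^\Psi$. The identification with the strong quantum homomorphism for $Z_G(\Gamma)\le\Rieffel G\Psi$ follows by repeating the slicing computation (\ref{eq:G_Psi_co_unit}) with $\hat{W}_{\Psi,Z_G(\Gamma)}=\hat{W}_{Z_G(\Gamma)}$ in the role of $\hat{W}_{\Psi,\Gamma}=\hat{W}_\Gamma$, where the first equality (and hence $\Rieffel{Z_G(\Gamma)}\Psi=Z_G(\Gamma)$) is exactly Proposition~\ref{prop:Gamma_fact_thru_center} applied to $\Gamma\le Z_G(\Gamma)$ since $\Gamma$ is central in $Z_G(\Gamma)$.

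For the co-unit formula I would exploit the factorisation $\Gamma\le Z_G(\Gamma)\le\Rieffel G\Psi$: Proposition~\ref{prop:G_Psi_co_unit} gives $\epsilon_{\Rieffel G\Psi}$ as factoring through the strong quantum homomorphism to $\Gamma$, and the latter factors classically through the one to $Z_G(\Gamma)$ constructed in the preceding step, so composing with $\epsilon_{Z_G(\Gamma)}$ yields the stated formula. Finally, for the last sentence about probability measures $\mu$ supported on $Z_G(\Gamma)$, invariance of $\mu$ under the adjoint action of $\Gamma$ is automatic (Remark~\ref{rem:single_functional_trivial}~\ref{enu:rem:single_functional_trivial__1}), and writing $\mu=\nu\circ\Pi_{Z_G(\Gamma)}$ I would replicate verbatim the short calculation in the proof of Corollary~\ref{cor:ext_of_prob_meas_on_Gamma}: using $\Pi_{Z_G(\Gamma)}\circ\mathscr{R}_\mu=\mathscr{R}_{Z_G(\Gamma),\nu}\circ\Pi_{Z_G(\Gamma)}$ together with the co-unit formula above gives
\[
\mu^\Psi=\epsilon_{Z_G(\Gamma)}\circ\pi_{Z_G(\Gamma)}^\Psi\circ\mathscr{R}_{Z_G(\Gamma),\nu}^\Psi\circ\Pi_{Z_G(\Gamma)}^\Psi\circ{(\pi^\Psi)}^{-1},
\]
which evidently factors through $\pi_{Z_G(\Gamma)}^\Psi\circ\Pi_{Z_G(\Gamma)}^\Psi\circ{(\pi^\Psi)}^{-1}$. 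The main obstacle is really the bookkeeping in the second paragraph: one must verify that the argument of Proposition~\ref{prop:G_Psi_co_unit} used only that $\Gamma$ was a closed quantum subgroup of the ambient object together with the equivariance of the restriction morphism, and never the abelianness of the target, so that the substitution $\Gamma\leadsto Z_G(\Gamma)$ on the target side is legitimate while the $2$-cocycle remains the original $\Psi$ on $\hat{\Gamma}$.
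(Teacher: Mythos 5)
Your proposal is correct and fills in the details in essentially the way the paper intends: the paper only sketches this observation by pointing back to the computation (\ref{eq:Gamma_le_G_Psi}), the proof of Proposition~\ref{prop:G_Psi_co_unit}, and the proof of Corollary~\ref{cor:ext_of_prob_meas_on_Gamma}, all with $Z_{G}(\Gamma)$ substituted for $\Gamma$ on the subgroup side while the cocycle stays on $\hat{\Gamma}$, and your verification that $\Omega$ commutes with $\lambda_{g}^{G}\tensor\lambda_{g}^{G}$ for $g\in Z_{G}(\Gamma)$, that $(\Pi_{Z_{G}(\Gamma)},\id)$ is a morphism of deformation data, and that Proposition~\ref{prop:Gamma_fact_thru_center} applies to $\Gamma\le Z_{G}(\Gamma)$ is exactly what those references supply. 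The only cosmetic difference is in the co-unit step, where you route through the factorisation $\Gamma\le Z_{G}(\Gamma)\le\Rieffel G{\Psi}$ rather than invoking $\epsilon_{\Rieffel G{\Psi}}=\epsilon_{Z_{G}(\Gamma)}\circ\Pi_{\Rieffel G{\Psi}}$ directly as in the proof of Proposition~\ref{prop:G_Psi_co_unit}; both work.
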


Remark that if $\Gamma$ is compact, then as a particular case of
Remark~\ref{rem:inv_by_averag} we can take an arbitrary probability
measure $\nu$ on $G$ and integrate $\Gamma\ni\gamma\mapsto\nu\circ\mathrm{ad}_{\gamma}$,
where $\Gamma\overset{\mathrm{ad}}{\curvearrowright}\Cz(G)$ is the
adjoint action, to get an $\mathrm{ad}$-invariant probability measure
$\mu$ on $G$, which can be later used to generate a convolution
semigroup of measures on $G$ associated with the relevant compound
Poisson process and satisfying the assumptions of Corollary~\ref{corx}.
In addition, if $\nu$ has full support, then so has $\mu$. If $\Gamma$
contains the centre $Z=Z(G)$ of $G$, we can replace the condition
that $\Gamma$ is compact by the slightly more general condition that
the quotient $\Gamma/Z$ is compact, in which case the fact that $\Gamma\overset{\mathrm{ad}}{\curvearrowright}\Cz(G)$
factors through the quotient map $\Gamma\to\Gamma/Z$ allows us to
consider the induced map $\Gamma/Z\ni\left[\gamma\right]\mapsto\nu\circ\mathrm{ad}_{\gamma}$
and integrate it (with respect to the Haar measure on $\Gamma/Z$)
to obtain an $\mathrm{ad}$-invariant probability measure on $G$.
\begin{example}
\label{ex:E(2)}Take $G:=\mathrm{\tilde{E}}(2)=\left\{ \left(\begin{smallmatrix}w & z\\
0 & w^{-1}
\end{smallmatrix}\right):z\in\C,w\in\mathbb{T}\right\} $ as a topological subgroup of $\mathrm{GL}(2,\C)$ and $\Gamma:=\left\{ \left(\begin{smallmatrix}w & 0\\
0 & w^{-1}
\end{smallmatrix}\right):w\in\mathbb{T}\right\} \cong\mathbb{T}.$ Then $Z_{G}(\Gamma)=\Gamma$. As explained in the previous paragraph,
averaging allows constructing probability measures on $G$ that are
invariant under the adjoint action $\Gamma\overset{\mathrm{ad}}{\curvearrowright}\Cz(G)$
and have full support and so, in particular, are not supported by
$Z_{G}(\Gamma)$. 

Fixing $w_{1},w_{2}\in\mathbb{T}$, $w_{1}\neq\pm w_{2}$, consider
the bicharacter $\Psi:=\Psi_{w_{1},w_{2}}$ on $\hat{\mathbb{T}}\cong\Z$
given by $\Psi_{w_{1},w_{2}}(n,m):=w_{1}^{n}w_{2}^{m}$, $(n,m)\in\Z^{2}$,
and form the Rieffel deformation locally compact quantum group $\Rieffel G{\Psi}$.
To prove that $\Rieffel G{\Psi}$ is not commutative we should show
that $\Omega\sigma(\Omega)^{*}$ does not belong to the commutant
$\left\{ \lambda_{g}^{G}\tensor\lambda_{g}^{G}\in\VN(G)\tensorn\VN(G):g\in G\right\} '$,
see Observation~\ref{obs:G_Psi_not_comm}. But $\Omega=\lambda_{j(w_{1})}^{G}\tensor\lambda_{j(w_{2})}^{G}$
where $j(w):=\left(\begin{smallmatrix}w & 0\\
0 & w^{-1}
\end{smallmatrix}\right)$ for $w\in\mathbb{T}$, and so $\Omega\sigma(\Omega)^{*}=\lambda_{j(w_{1}w_{2}^{-1})}^{G}\tensor\lambda_{j(w_{2}w_{1}^{-1})}^{G}$,
which is indeed not in the above commutant because $w_{1}w_{2}^{-1}\notin\left\{ 1,-1\right\} $.
\end{example}

At the moment we do not have an example of a locally compact group
$G$ with a non-compact (or more generally, non-compact modulo the
centre of $G$) abelian subgroup $\Gamma$ admitting a probability
measure on $G$ which is invariant under the adjoint action of $\Gamma$
and not supported by the centraliser $Z_{G}(\Gamma)$. The question
of whether or not such an example exists seems to be related to deep
topological group considerations. Uri Bader has informed us that results
of \cite{Bader_Duchesne_Lecureux} imply that when $G$ is an algebraic
group and $\Gamma$ satisfies mild non-compactness assumptions, such
an invariant measure cannot exist. 

Finding non-trivial examples where $\G$ is a `genuine' locally
compact quantum group is quite tricky and requires further work; the
main motivation behind this paper is to look for ways of producing
such examples. One can always take a single state, average it and
then construct the associated compound Poisson process as described
above for classical $\G$, but this is rather straightforward and
less interesting from the analytic point of view.

\section*{Acknowledgments}

We are grateful to Kenny De~Commer and to Sergey Neshveyev for helpful
conversations and correspondences about the subject of this paper;
to Stefaan Vaes for asking a question that prompted us to further
extend the results we had at the time; to Orr M.~Shalit and to Michael
Skeide for answering Question~\ref{ques:point_norm_point_strict}
affirmatively; to Uri Bader for his enlightening remarks concerning
Subsection~\ref{subsec:commutative}; to Mariusz Tobolski and to
Jacek Krajczok for their assistance on Observation~\ref{obs:two_strict_top};
to Alcides Buss for an interesting correspondence on crossed products
and in particular for providing the references in Lemma~\ref{lem:opp_crossed_prod};
and to L{\'e}onard Cadilhac and {\'E}ric Ricard for their help with
a problem of convergence in crossed products. We also thank the referees
for their useful comments. Finally we thank Pawe{\ldash} Kasprzak,
our Rieffel deformation guru, for the ongoing inspiration.

\section*{Funding}

The first author was partially supported by the National Science Centre
(NCN) grant no. 2020/39/I/ST1/01566. 

\bibliographystyle{amsalpha}
\bibliography{RieffelDeformation}

\end{document}